\patchcmd{\subequations}{\alph{equation}}{\alphalph{\value{equation}}}{}{}
\newcommand{\bsubeq}{\begin{subequations}}
\newcommand{\esubeq}{\end{subequations}}
\newcommand{\BI}{\begin{itemize}}
\newcommand{\EI}{\end{itemize}}
\newcommand{\I}{\item}
\newcommand{\cblack}{\color{black}}
\newcommand{\cred}{\color{black}}
\newcommand{\cblue}{\color{black}}
\newcommand{\dblue}{\color{black}}
\renewcommand{\mc}{\mathcal}
\def\st{{\rm s.t.}}
\newcommand{\conv}{{\rm conv}}
\newcommand{\thetau}{\bs{\theta}_{ij}^{\mathbf{u}}}
\newcommand{\thetaM}{\bs{\theta}^{\mathbf{M}}}
\newcommand{\img}{\mathbf{j}}
\newcommand{\thetalb}{\underline{\bs{\theta}}_{ij}}
\newcommand{\thetaub}{\overline{\bs{\theta}}_{ij}}
\newcommand{\vlb}{\bs{\underline{v}}}
\newcommand{\vub}{\bs{\overline{v}}}
\newcommand{\vsigma}{\bs{v}^{\sigma}}
\newcommand{\phiij}{\bs{\phi}_{ij}}
\newcommand{\deltaij}{\bs{\delta}_{ij}}
\newcommand{\bcij}{\bs{b}^c_{ij}}
\newcommand{\bsij}{\bs{b}^s_{ij}}
\newcommand{\gcij}{\bs{g}^c_{ij}}
\newcommand{\gij}{\bs{g}_{ij}}
\newcommand{\gsij}{\bs{g}^s_{ij}}
\newcommand{\Ycij}{\bs{Y}_{ij}^c}
\newcommand{\Ycji}{\bs{Y}_{ji}^c}
\newcommand{\Yij}{\bs{Y}_{ij}}
\newcommand{\Ysij}{\bs{Y}_{ij}^s}
\newcommand{\Tij}{\bs{T}_{ij}}
\newcommand{\tRij}{\bs{t}^{R}_{ij}}
\newcommand{\tIij}{\bs{t}^{I}_{ij}}
\newcommand{\sijbar}{\overline{\bs{s}}^{a}_{ij}}
\newcommand{\sijbarsq}{\left( \overline{\bs{s}}^{a}_{ij} \right)^2}
\newcommand{\lijbar}{\overline{\bs{l}}_{ij}}
\newcommand{\clb}{\bs{\underline{c}}_{ij}}
\newcommand{\cub}{\bs{\overline{c}}_{ij}}
\newcommand{\slb}{\bs{\underline{s}}_{ij}}
\newcommand{\sub}{\bs{\overline{s}}_{ij}}
\newcommand{\thetac}{\bs{\theta}^{\bs{c}}_{ij}}
\newcommand{\thetas}{\bs{\theta}^{\bs{s}}_{ij}}
\newcommand{\xc}{x^{\mathbf{c}}}
\newcommand{\xcstar}{\bs{x}^{\mathbf{c}*}}
\newcommand{\zstar}{\mathbf{z}_{ij}^*}
\newcommand{\zc}{y_{\mc{C}}}
\newcommand{\xclb}{\bs{\underline{x}}^\mathbf{c}}
\newcommand{\xcub}{\bs{\overline{x}}^\mathbf{c}}
\newcommand{\betabar}{\bs{\beta}}
\newcommand{\cycconstrs}{lifted cycle constraints}
\newcommand{\sm}{\setminus}
\newcommand{\fr}[1]{\mathfrak{#1}}
\newcommand{\bs}[1]{\boldsymbol{#1}}
\newlength{\characterlength}
\begin{document}

\RUNAUTHOR{Guo et al.}

\RUNTITLE{Tightening QC Relaxations for the ACOTS Problem}

\TITLE{Tightening Quadratic Convex Relaxations for the \\ AC Optimal Transmission Switching Problem}

\ARTICLEAUTHORS{%
\AUTHOR{Cheng Guo}
\AFF{School of Mathematical and Statistical Sciences, Clemson University, Clemson, SC 29634, USA, \EMAIL{cguo2@clemson.edu}}
\AUTHOR{Harsha Nagarajan}
\AFF{Applied Mathematics and Plasma Physics (T-5), Los Alamos National Laboratory, Los Alamos, NM 87545, USA, \EMAIL{harsha@lanl.gov}}
\AUTHOR{Merve Bodur}
\AFF{School of Mathematics and Maxwell Institute for Mathematical Sciences, University of Edinburgh, Edinburgh,\ EH9 3FD, UK, \EMAIL{merve.bodur@ed.ac.uk}}
} 

\ABSTRACT{%
The Alternating Current Optimal Transmission Switching (ACOTS) problem incorporates line switching decisions into the AC Optimal Power Flow (ACOPF) framework, offering well-known benefits in reducing operational costs and enhancing system reliability. ACOTS optimization models contain discrete variables and nonlinear, non-convex constraints, which make it difficult to solve. {\cred In this work, we develop strengthened quadratic convex (QC) relaxations for ACOTS, where we tighten the relaxation with} several new valid inequalities, {\cred including} a novel kind of on/off cycle-based polynomial constraints by taking advantage of the network structure. {\cred We linearize the sum of on/off trilinear terms in the relaxation using extreme-point representation, demonstrating theoretical tightness, and efficiently incorporate on/off cycle-based polynomial constraints through disjunctive programming-based cutting planes. Combined with an optimization-based bound tightening algorithm, this results in the tightest QC-based ACOTS relaxation to date. }{\cblue We additionally propose a novel maximum spanning tree-based heuristic to improve the computational performance by fixing certain lines to be switched on.} {\cblue Our extensive numerical experiments on medium-scale PGLib instances show significant improvements on relaxation bounds, while tests on large-scale instances with up to 2,312 buses demonstrate substantial performance gains. To our knowledge, this is the first ACOTS relaxation-based approach to demonstrate near-optimal switching solutions on realistic large-scale power grid instances.} 
}%

\KEYWORDS{Optimal transmission line switching, On/off quadratic convex relaxation, {\cblue Extreme-point representation}, {\cblue Cycle constraints}, Bound tightening.}

\maketitle

\section{Introduction}
Transmission switching, with its inception in the 1980s \citep{glavitsch1985switching}, has gained considerable attention in both industry and academia in recent years \citep{hedman2008optimal,kocuk2017new}. The optimal transmission switching (OTS) problem studies how to switch on or off certain transmission lines to modify the network topology in the real-time operation of transmission power grids. Solving the OTS problem brings several benefits that the traditional optimal power flow (OPF) problem solution cannot offer, such as reducing the total operational cost, mitigating transmission congestion, clearing contingencies, and improving engineering limits \citep{hedman2011review}. Thus, as the modern transmission control and relay technologies evolve, transmission line
switching has become an important option in power system operators' toolkits. 

Previous literature on OTS has mainly relied on the DC approximation of the power flow model to avoid the mathematical complexity of the non-convex AC power flow equations.
The first formal mathematical model for the OTS problem, proposed by \cite{fisher2008optimal}, relies on this DC approximation. \cite{kocuk2016cycle} developed a cycle-induced relaxation for a DCOTS model, and characterized its convex hull.

The drawback of DCOTS approximation is that the optimal decisions may not accurately represent power flows or may even be infeasible in the AC setting \citep{coffrin2014primal}. This drawback motivates the adoption of ACOTS. The ACOTS problem can be formulated as a non-convex mixed-integer nonlinear program (MINLP), which is challenging to solve. Moreover, even in the DC setting, the OTS problem is known to be NP-hard \citep{lehmann2014complexity}. 
Several heuristics are proposed for solving the ACOTS problem, e.g., by \cite{barrows2014correcting} and \cite{goldis2013applicability}. Separately, convex relaxations for ACOTS have gained significant attention in recent years. 

The convex relaxations of the ACOTS problem, of which the literature is quite scarce, are built upon the rich literature on ACOPF convex relaxations, such as the second-order cone (SOC) relaxation \citep{jabr2006radial}, the quadratic convex (QC) relaxation \citep{coffrin2015qc}, and the semi-definite programming (SDP) relaxation \citep{bai2008semidefinite}. For the ACOPF problem, both standard SDP and QC relaxations are at least as strong as the SOC relaxation, while the SDP and QC relaxation strengths are not directly comparable. Computationally, the SOC and QC relaxations are faster and more reliable than the SDP relaxation \citep{coffrin2015qc}. In the ACOTS setting, \cite{hijazi2017convex} and \cite{bestuzheva2020convex} propose a QC relaxation that incorporates on/off decision variables, which provides a tight lower bound to the generation-cost minimization objective. We further tighten this on/off version of the QC relaxation with a much stronger linearization and additional valid inequalities, with some of the latter being novel. 

In particular, one type of valid inequalities we incorporate is cycle-based polynomial constraints (``lifted cycle constraints" for short). The \cycconstrs \ were first proposed by \cite{kocuk2016strong} as a relaxation to the arc-tangent constraints in ACOPF. Their work provides a reformulated SOC relaxation for ACOPF, strengthened by the McCormick relaxation of the  \cycconstrs, which is shown to be incomparable to the SDP relaxation. Further, these cycle-based valid inequalities were generalized for the ACOTS problem by \cite{kocuk2017new}, which was derived in the rectangular co-ordinates. However, in our work, we develop a new type of \cycconstrs \ based on the QC relaxation of the ACOPF problem, written in polar co-ordinates. Our method  takes advantage of the direct access to auxiliary variables representing the trigonometric functions in the QC relaxation. Then we reformulate the on/off version of those \cycconstrs~for the ACOTS problem, which is new in the literature. In our strengthened formulation, we combine those new constraints and the \cycconstrs \ from \citep{kocuk2017new}. Further, we linearize these {\cred on/off} polynomial constraints with the tightest extreme-point representation, which captures the convex hull of the {\cred on/off} \cycconstrs~for a given cycle. {\cred Since adding all constraints at the same time can make the problem challenging to solve, we derive novel cutting planes and incorporate \cycconstrs \ via a branch-and-cut scheme.}

We further improve the bounds using an optimization-based bound tightening (OBBT) technique. OBBT is often used in MINLPs to tighten relaxation bounds \citep{nagarajan2019adaptive}. It has been shown to be an effective bound-tightening method in nonlinear AC power flow models. \cite{sundar2018optimization} use OBBT to tighten an ACOPF-QC relaxation, whereas \cite{bestuzheva2020convex} use OBBT for an ACOTS-QC relaxation with nonlinear terms linearized using weaker recursive-McCormick relaxations. Additionally, \cite{fattahi2018bound} employs bound tightening methods for the simpler DCOTS model, testing them on large-scale networks. 
In our work, we incorporate all the proposed tightening valid inequalities and the ones from the literature within OBBT, to achieve a very tight lower bound for the ACOTS problem.

Our main contributions can be summarized as follows:
    
    (1) We strengthen the ACOTS-QC relaxation with several techniques. First, we linearize the {\cred summation of two} on/off trilinear terms with the tight extreme-point representation {\cred and prove such a linearization captures its convex hull}. To the best of our knowledge, we are the first to {\cred develop this linearization method for} ACOTS. We also reformulate several ACOPF-QC strengthening constraints for the OTS setting, some of which are novel. Compared with the state-of-the-art on/off QC relaxation formulation \citep{coffrin2018power}, our strengthened relaxation is shown to provide better lower bounds for many test cases, and some of those improvements are substantial. {\cblue It can also solve several large-scale PGLib instances with 500 to 2,312 buses, which, to the best of our knowledge, have not been previously solved by the ACOTS-QC relaxation.}

    (2) We derive a new lifted cycle constraint that strengthens the QC relaxations of both ACOPF and ACOTS. We linearize those constraints with the extreme-point representation, which is always tighter than the recursive-McCormick relaxation in the literature. Also, to separate \cycconstrs \ with discrete decisions, we develop {\cred novel cutting planes by reformulating Benders cuts via disjunctive programming. We incorporate those cutting planes in a branch-and-cut framework, which leads to significant solution time reductions.} {\cblue The \cycconstrs~are also useful for obtaining solutions of large-scale PGLib instances.}

    (3) {\cred We combine strengthening methods listed in (1) and (2) with OBBT and} obtain the tightest {\cred QC-based} ACOTS relaxation in the literature, to the best of our knowledge.

    {\cblue (4) We propose a novel heuristic that solves a \textit{restricted} ACOTS-QC model by fixing a maximum spanning tree of the network to remain switched on. This approach significantly improves computational performance on medium-scale networks, enables the solution of large-scale instances that were previously unsolved, and provides near-optimal solutions.}

\section{The ACOTS Problem}\label{sec: setup}
{\cred Before describing the model for the ACOTS problem, we first introduce some notation.} Throughout, constants are typeset in boldface to make it easier to distinguish between decision variables and parameters. In the AC power flow equations, upper case letters represent complex quantities. $\fr{R}(\cdot)$ and $\fr{T}(\cdot)$ respectively denote the real and imaginary parts of a complex number. Given any two complex numbers (variables/constants) $z_1$ and $z_2$, $z_1 \geqslant z_2$ implies $\fr{R}({z}_1) \geqslant \fr{R}({z}_2)$ and $\fr{T}({z}_1) \geqslant \fr{T}({z}_2)$. $\mid\cdot\mid$ and $(\cdot)^*$ represent the magnitude and Hermitian conjugate of a complex number, respectively. When applied on a real-valued number, $\mid\cdot\mid$ represents its absolute value. 
{\cred The notation for sets, parameter and variables is summarized in Table \ref{table: notation}.}

{
\small
\begin{longtable}[htbp]{ll}
\caption{Notation}\label{table: notation}\\
\toprule
\multicolumn{2}{l}{\textbf{Sets and parameters:}}       \\*[0.1cm]
$\mc{N}$ & Set of buses (nodes).\\
$\mc{N}^L$ & Set of leaf buses with no loads.\\
$\mc{N}^{\text{ref}}$ & Set of reference buses.\\
$\cal{G}$ & Set of generators.\\
${\cal{G}}_i$ & Set of generators at bus $i$.\\
$\mc{A}$ & Set of lines (arcs).\\
$\mc{A}^{R}$ & Set of arcs in reversed direction.\\
\cblue $\bs{c}^{\bs{g}}_0, \bs{c}^{\bs{g}}_1, \bs{c}^{\bs{g}}_2$ & Generation cost coefficients.\\
$\img$ & Unit imaginary number.\\
$\Yij = \gij +\img\bs{b}_{ij}$ & Admittance on line $(i,j)$.\\
$\Ycij = \gcij + \img \bcij$ & Charging admittance on line $(i,j)$.\\
$\Ysij = \gsij + \img \bsij$ & Shunt admittance on line $(i,j)$.\\
$\Tij = \tRij + \img \tIij$ & Branch complex transformation ratio (tap ratio) on line $(i,j)$.\\
$\bs{S}_{i}^{\bs{d}} = \bs{p}_{i}^{\bs{d}} + \img\bs{q}_{i}^{\bs{d}}$ & AC power demand at bus $i$.\\
\cblue $\clb, \cub$ & \cblue Cosine function ($\cos({\theta}_{ij})$) bounds on line $(i,j)$.\\
\cblue $\slb, \sub$ & \cblue Sine function ($\sin({\theta}_{ij})$) bounds on line $(i,j)$.\\
{\cblue $\sijbar$} & Apparent power bound on line $(i,j)$.\\
$\thetalb, \thetaub$ & Phase angle difference bounds on line $(i,j)$.\\
$\underline{\bs{v}}_i, \overline{\bs{v}}_i$ & Voltage magnitude bounds at bus $i$.\\
$ \underline{\bs{S}}_{i}^{\bs{g}}$, $\overline{\bs{S}}_{i}^{\bs{g}}$ & Power generation bounds at bus $i$.\\
$\lijbar$ & Current magnitude squared upper limit on line $(i,j)$.\\
\midrule
 \multicolumn{2}{l}{\textbf{Variables:}} \\*[0cm]
$v_i$ & Voltage magnitude at bus $i$.\\
$\theta_i$ & Voltage angle at bus $i$.\\
${V_i} = v_i\bs{e}^{\img\theta_i}$ & AC complex voltage at bus $i$.\\
$\theta_{ij}$ & Phase angle difference on line $(i,j)$.\\
$w_i$ & Squared voltage magnitude at bus $i$.\\
$W_{ij} = w^R_{ij} + \img w^I_{ij}$ & AC voltage product on line $(i,j)$.\\
$S_{ij} = p_{ij} +\img q_{ij}$ & AC power flow on line $(i,j)$.\\
$S^g_k= p^g_k+\img q^g_k$ & AC power generation of generator $k$.\\
$l_{ij}$ & Current magnitude squared on line $(i,j)$.\\
$z_{ij}$ & 1 if line $(i,j)$ is switched on, 0 otherwise.\\ 
\bottomrule
\end{longtable}
}

The power network can be represented with the graph $G = (\mc{N}, \mc{A})$, where $\mc{N}$ corresponds to the set of buses, while the set of arcs $\mc{A}$ corresponds to the set of lines. Note that we assume lines in the network are directed with designated from/to buses, as indicated by the data. This assumption is conventional, and it is necessary because the data contains asymmetric shunt conductance and transformers.

The ACOTS problem minimizes the total production cost of generators such that all the demands at the buses, the physical constraints (e.g., Ohm's and Kirchoff's law), and engineering limit constraints (e.g., transmission line flow limits) are satisfied. In this work, for the purpose of the QC relaxation (see Section \ref{sec: qc}), we model the AC power flow equations in \textit{polar} co-ordinates \citep{taylor2015convex}, thus the following ACOTS formulation:
\begin{subequations}
\label{eq:ACOTS}
\begin{align}
\label{eq: ots_objective} 
\min&\sum_{k\in \cal{G}} \left(\bs{c}^{\bs{g}}_{2k}(p_k^{g})^2 + \bs{c}^{\bs{g}}_{1k} p_k^g \right) + \sum_{i\in\mc{N}\sm\mc{N}^L}\sum_{k\in\mc{G}_i} \bs{c}^{\bs{g}}_{0k} + \sum_{i\in\mc{N}^L}\sum_{k\in\mc{G}_i}\bs{c}^{\bs{g}}_{0k} z_{\{ft | (f,t) \in\mc{A}, f=i \text{~or~} t = i\}} \hspace{-6cm}\\
\label{eq: ots_s_balance}\st&\sum_{k \in {\cal{G}}_i} S_k^g - \bs{S}_{i}^{\bs{d}} - \Yij^{s*} w_i = \sum_{(i,j)\in\mc{A} \cup \mc{A}^R}S_{ij} && \forall i \in \cal{N} \\
&\label{eq: ots_sij}S_{ij} = (\Yij + \Ycij)^* \frac{w_i}{|\Tij|^2}z_{ij} - \Yij^* \frac{W_{ij}}{\Tij} && \forall (i,j) \in \mc{A} \\
&\label{eq: ots_sji}S_{ji} = (\Yij + \Ycji)^* w_j z_{ij} - \Yij^* \frac{W^*_{ij}}{\bs{T}^*_{ij}}\hspace{-3mm}&&\forall (i,j) \in \mc{A}\\
&\label{eq: ots_wii} w_i = v_i^2 &&  \forall i \in \cal{N}\\
&\label{eq: ots_wij} W_{ij} = V_iV_j^* z_{ij} &&  \forall (i,j) \in \mc{A}\\
& \label{eq: ots_link}   \theta_{ij} = \theta_i - \theta_j && \forall (i,j) \in \mc{A}\\
& \label{eq: ots_theta} \underline{\bs{\theta}}_{ij}z_{ij} - \thetaM (1 - z_{ij}) \leqslant \theta_{ij} \leqslant \overline{\bs{\theta}}_{ij} z_{ij} + \thetaM (1 - z_{ij}) && \forall (i,j) \in \mc{A}\\
& \label{eq: ots_refbus} \theta_i = 0 && \forall i\in\mc{N}^{\text{ref}}\\
&\label{eq: ots_g_cap} \underline{\bs{S}}_{i}^{\bs{g}} \leqslant S^g_k \leqslant \overline{\bs{S}}_{k}^{\bs{g}} &&  \forall k \in \cal{G} \\
&\label{eq: ots_s_cap}|S_{ij}|^2 \leqslant {\cblue \sijbarsq} z_{ij}^2, \quad |S_{ji}|^2 \leqslant {\cblue \sijbarsq} z_{ij}^2 && \forall (i,j) \in \mc{A}\\
&\label{eq: ots_v_limit} \vlb_i \leqslant v_i \leqslant \vub_i && \forall i\in\mc{N}\\
&\label{eq: z_var} z_{ij} \in \{0, 1\} && \forall (i, j)\in\mc{A},
\end{align}
\end{subequations}
where $z_{\{ft | (f,t) \in\mc{A}, f=i \text{~or~} t = i\}}$ is the switch on/off variable for a line having either end connected to a leaf node with 0 load. When such a line is switched off, the generators on the leaf node are disconnected from the network, and thus we do not need to pay the fixed cost $c_{0k}$. 

The convex quadratic objective \eqref{eq: ots_objective} minimizes total generator dispatch cost. Constraints \eqref{eq: ots_s_balance} correspond to the power balance at each bus, i.e., Kirchoff's current law. Constraints \eqref{eq: ots_sij} to \eqref{eq: ots_wij} model the power flow on each line. Note that constraints \eqref{eq: ots_sij} and \eqref{eq: ots_sji} ensure that the power flow over line $(i, j)$ is zero if the line is switched off; constraints \eqref{eq: ots_wij} ensure that $W_{ij} = 0$ when line $(i, j)$ is switched off. Constraints \eqref{eq: ots_link} connect voltage angle and voltage difference variables. 

Constraints \eqref{eq: ots_theta} limit the phase angle difference on each line. We define $\thetau = \max(|\thetalb|, |\thetaub|)$. Let $\bs{\theta}^{\bs{u}, \max}_{ij,k}$ be the $k$th largest value in $\{\thetau~|~(i, j) \in\mc{A}\}$, and $\thetaM = \sum_{k=1}^{|\mc{N}|-1} \bs{\theta}^{\bs{u}, \max}_{ij,k}$ be a big-M constant for phase angle difference. This big-M constant enables us to provide proper bounds for $\theta_{ij}$ in constraints \eqref{eq: ots_theta}. 

Constraints \eqref{eq: ots_refbus} set the voltage angles of reference buses to 0.  Constraints \eqref{eq: ots_g_cap} restrict the apparent power output of each generator. Constraints \eqref{eq: ots_s_cap} are thermal limit constraints that restrict the total electric power transmitted on each line. Note that in these constraints a squared form of $z_{ij}$ is used instead of the linear form, as it produces a tighter formulation \citep{hijazi2017convex}. Constraints \eqref{eq: ots_v_limit} limit the voltage magnitude at each bus.


This ACOTS model is a non-convex MINLP, which contains nonlinear constraints \eqref{eq: ots_sij} through \eqref{eq: ots_wij}.
An easy method to linearize constraints \eqref{eq: ots_sij} and \eqref{eq: ots_sji} is to introduce a lifted variable $w^z_{ij}$ per line, which equals $w_i$ when $z_{ij} = 1$ and 0 otherwise, then constraints \eqref{eq: ots_sij} and \eqref{eq: ots_sji} can be replaced with the following linear constraints for every line $(i,j) \in \mc{A}$:
\begin{subequations}
\label{eq:linearize_sij}
\begin{alignat}{3}
&\label{eq: ots_sij_lift}S_{ij} = (\Yij + \Ycij)^* \frac{w^z_{ij}}{|\Tij|^2} - \Yij^* \frac{W_{ij}}{\Tij}  \\
&\label{eq: ots_sji_lift}S_{ji} = (\Yij + \Ycji)^* w^z_{ji} - \Yij^* \frac{W^*_{ij}}{\bs{T}^*_{ij}}\\
& \label{eq: ots_wz_bound1} w_i - (1-z_{ij}) \vub_i^2 \leqslant w^z_{ij} \leqslant w_i - (1 - z_{ij}) \vlb_i^2  \\
& \label{eq: ots_wz_bound2} w_j - (1-z_{ij}) \vub_j^2 \leqslant w^z_{ji} \leqslant w_j - (1 - z_{ij}) \vlb_j^2 \\
&  \label{eq: ots_wz_bound3}\vlb_i^2 z_{ij} \leqslant w^z_{ij} \leqslant \vub_i^2 z_{ij} \\
&  \label{eq: ots_wz_bound4}\vlb_j^2 z_{ij} \leqslant w^z_{ji} \leqslant \vub_j^2 z_{ij}.
\end{alignat}
\end{subequations}
Constraints \eqref{eq: ots_sij_lift} - \eqref{eq: ots_wz_bound2} are from \citep{hijazi2017convex}, while constraints \eqref{eq: ots_wz_bound3} and \eqref{eq: ots_wz_bound4} are new and provide tighter bounds for $w^z_{ij}$ and $w^z_{ji}$. Tractable convex relaxations for constraints \eqref{eq: ots_wii} and \eqref{eq: ots_wij} are described in the next section.
\section{On/Off Quadratic Convex Relaxation}\label{sec: qc}
The on/off version of the QC relaxation for the ACOTS model relaxes nonlinear constraints \eqref{eq: ots_wii} and \eqref{eq: ots_wij}. 
Let $w^R_{ij} := \fr{R}(W_{ij})$ and $w^I_{ij} := \fr{T}(W_{ij})$, then \eqref{eq: ots_wij} can be equivalently written as follows $(\forall (i, j) \in\mc{A})$: 
\begin{subequations}\label{eq:w_nonconv}
\begin{align}
&\label{eq:wcs} w^R_{ij} = z_{ij}\left(v_iv_j\cos(\theta_{ij})\right)  \\
& \label{eq:wsn} w^I_{ij} = z_{ij}\left(v_iv_j\sin(\theta_{ij})\right). 
\end{align}
\end{subequations}

A key feature of the QC relaxation is the use of polar co-ordinates, which has direct access to voltage magnitude $v_i$ and voltage angle $\theta_i$ variables. 
This enables stronger links between voltage variables. In what follows, we list the QC relaxation constraints, some of which are based on previous works \citep{coffrin2015qc,hijazi2017convex}, 
while others are newly derived, which we will specify.

(1) \textit{Quadratic function relaxation ($\forall i\in\mc{N}$): }We can formulate the convex-hull envelope for 
constraints \eqref{eq: ots_wii} by relaxing every equality to a quadratic inequality constraint \eqref{eq:MC-q1}, {\cblue as done in \citep{hijazi2017convex}, } 
and providing upper bounds via linear McCormick relaxation constraints in \eqref{eq:MC-q2}: 
\begin{subequations}\label{eq:MC-q}
\begin{align}
& {w}_i \geqslant v_i^2\label{eq:MC-q1}\\
& {w}_i \leqslant (\vlb_i + \vub_i)v_i - \vlb_i \vub_i\label{eq:MC-q2} 
\end{align}
\end{subequations}

(2) \textit{Cosine and sine function relaxations $(\forall (i, j) \in \mc{A})$: }
The trigonometric term $\cos(\theta_{ij})$ in \eqref{eq:wcs} is non-convex. 
We define a lifted variable $c_{ij}$ that captures the the convex envelope of 
$\cos(\theta_{ij})$. 
We assume $\thetau \leqslant \pi/2$, which is reasonable as the absolute value of the phase angle 
differences across the lines is usually under 15 degrees in practice \citep{purchala2005usefulness}. We define
the following constants for every line $(i,j) \in \mc{A}$ which are necessary for the relaxation constraints: 
\begin{align*}
    \thetac = \frac{\cos(\thetaub) - \cos(\thetalb)}{\thetaub-\thetalb}, \quad 
    \thetas = \frac{\sin(\thetaub) - \sin(\thetalb)}{\thetaub-\thetalb}
\end{align*}
Following the disjunctive programming method in \citep{hijazi2017convex}, an on/off version of the
convex relaxation of the cosine function is as follows:
\begin{subequations} \label{eq: ots_cosine}
\begin{align}
& - c_{ij} + \thetac \theta_{ij} \leqslant \Big(\thetac \thetalb - \cos(\thetalb)\Big) z_{ij}  + \left|\thetac \right| \thetaM (1 - z_{ij})\label{eq: ots_cosine_sine_1} \\
& c_{ij} \leqslant z_{ij} - \frac{1- \cos(\thetau)}{(\thetau)^2} \theta_{ij}^2 + \frac{1- \cos(\thetau)}{(\thetau)^2} (\thetaM)^2 (1 - z_{ij})\label{eq: ots_cosine_sine_2}\\
& \clb z_{ij} \leqslant c_{ij} \leqslant \cub z_{ij}. \label{eq: ots_cosine_sine_7}
\end{align}
\end{subequations}%
Constraints \eqref{eq: ots_cosine_sine_1} and \eqref{eq: ots_cosine_sine_2} are big-M constraints. 
When $z_{ij} = 1$, they represent quadratic convex relaxations of $cos(\theta_{ij})$ 
derived from trigonometric identities and properties of quadratic functions, and those convex 
relaxations are not valid when the line $(i, j)$ is switched off, as we have $c_{ij} = 0$. 
Therefore, big-M parameter $\thetaM$ is used to ensure that when $z_{ij} = 0$, those 
constraints are valid for $c_{ij} = 0$ and $\theta_{ij}\in[\thetalb, \thetaub]$. 
Constraint \eqref{eq: ots_cosine_sine_7} provides bounds for $c_{ij}$, and ensure that $c_{ij} = 0$ 
when the line $(i,j)$ is switched off. Note that constraint \eqref{eq: ots_cosine_sine_1} is a 
new constraint that is not in \citep{hijazi2017convex} or \citep{bestuzheva2020convex}.

Similarly, we define a lifted variable $s_{ij}$ that captures the the convex envelope of 
$\sin(\theta_{ij})$. 
When $\thetau \leqslant \pi/2$, 
a disjunctive relaxation of the sine function is as follows: 
\begin{subequations}\label{eq: acots_sine}
\begin{align}
& s_{ij} - \cos\left(\frac{\thetau}{2}\right) \theta_{ij} \leqslant \left(\sin\left(\frac{\thetau}{2}\right) - \cos\left(\frac{\thetau}{2}\right) \frac{\thetau}{2}\right) z_{ij} + \cos\left(\frac{\thetau}{2}\right)\thetaM (1-z_{ij}),~\text{if~} \thetaub \geqslant 0\label{eq: ots_cosine_sine_3}\\
- & s_{ij} + \cos\left(\frac{\thetau}{2}\right) \theta_{ij} \leqslant \left(\sin\left(\frac{\thetau}{2}\right)- \cos\left(\frac{\thetau}{2}\right) \frac{\thetau}{2} \right) z_{ij} + \cos\left(\frac{\thetau}{2}\right)\thetaM (1-z_{ij}),~\text{if~}\thetalb \leqslant 0\\
& s_{ij} - \thetas \theta_{ij} \leqslant \left( - \thetas \thetalb + \sin(\thetalb)\right) z_{ij} + \thetas \thetaM (1 - z_{ij}),~\text{if~} \thetaub \leqslant 0\\
-& s_{ij} + \thetas \theta_{ij} \leqslant \left( \thetas \thetalb  - \sin(\thetalb)\right) z_{ij} + \thetas \thetaM (1 - z_{ij}),~\text{if~}\thetalb \geqslant 0\label{eq: ots_cosine_sine_6}\\
& \slb z_{ij} \leqslant s_{ij} \leqslant \sub z_{ij}\label{eq: ots_cosine_sine_8},
\end{align}
\end{subequations}
where constraints \eqref{eq: ots_cosine_sine_3} - \eqref{eq: ots_cosine_sine_6} are derived from 
linear outer approximation of the $\sin(\theta_{ij})$ function \citep{hijazi2017convex}. 
 
(3) \textit{Extreme-point representation for summation of on/off trilinear terms $(\forall (i, j) \in \mc{A})$: }{\cred Next, we develop a novel extreme-point representation to linearize the sum of two on/off trilinear terms and theoretically prove its tightness. We first} substitute the non-convex functions $\cos(\theta_{ij})$ and $\sin(\theta_{ij})$ in constraints \eqref{eq:w_nonconv}
with lifted variables $c_{ij}$ and $s_{ij}$ and their convex envelopes, respectively. The remaining non-linearities reduce to trilinear terms $v_i v_j c_{ij}$ and $v_i v_j s_{ij}$, which are further controlled by the status of the on/off variable $z_{ij}$. To linearize these terms, we generalize the convex hull representation of the extreme-point formulation \citep{lu2018tight} to incorporate on/off variables, as discussed below.

Let the extreme points of the domain $[\underline{\bs{v}}_i, \overline{\bs{v}}_i] \times [\underline{\bs{v}}_j, \overline{\bs{v}}_j]  \times [\underline{\bs{c}}_{ij}, \overline{\bs{c}}_{ij}]$ be denoted by $\bs{\xi}^k$ with $k = 1,\hdots,8$, and the extreme points
of the domain $[\underline{\bs{v}}_i, \overline{\bs{v}}_i] \times [\underline{\bs{v}}_j, \overline{\bs{v}}_j]  \times [\underline{\bs{s}}_{ij}, \sub]$ be denoted by $\bs{\gamma}^k, k = 1,\hdots,8$. We relax constraints \eqref{eq:w_nonconv} as follows:
\begin{subequations} 
\label{eq:triform}
\allowdisplaybreaks
\begin{align}
& w^R_{ij} = \sum_{k=1}^8 \lambda^c_{ij, k} \,({\bs{\xi}_1^k} {\bs{\xi}_2^k} {\bs{\xi}_3^k}), \quad w^I_{ij} = \sum_{k=1}^8 \lambda^s_{ij, k} \,({\bs{\gamma}_1^k} {\bs{\gamma}_2^k} {\bs{\gamma}_3^k}) \label{eq:triform1}\\
&\sum_{k=1}^8 \lambda^c_{ij, k} {\bs{\xi}_1^k} + (1 - z_{ij})\vlb_i \leqslant v_i \leqslant \sum_{k=1}^8 \lambda^c_{ij, k} {\bs{\xi}_1^k} + (1 - z_{ij})\vub_i
\label{eq:triform_2}\\
&\sum_{k=1}^8 \lambda^s_{ij, k} {\bs{\gamma}_1^k} + (1 - z_{ij})\vlb_i \leqslant v_i \leqslant \sum_{k=1}^8 \lambda^s_{ij, k} {\bs{\gamma}_1^k} + (1 - z_{ij})\vub_i\label{eq:triform_3}\\ 
& \sum_{k=1}^8 \lambda^c_{ij, k} {\bs{\xi}_2^k} + (1 - z_{ij})\vlb_j \leqslant v_j \leqslant \sum_{k=1}^8 \lambda^c_{ij, k} {\bs{\xi}_2^k} + (1 - z_{ij})\vub_j \label{eq:triform_4}\\
&\sum_{k=1}^8 \lambda^s_{ij, k} {\bs{\gamma}_2^k} + (1 - z_{ij})\vlb_j \leqslant v_j \leqslant \sum_{k=1}^8 \lambda^s_{ij, k} {\bs{\gamma}_2^k} + (1 - z_{ij})\vub_j \label{eq:triform_5}\\
& c_{ij} = \sum_{k=1}^8 \lambda^c_{ij, k} \, {\bs{\xi}_3^k}, \  s_{ij} = \sum_{k=1}^8 \lambda^s_{ij, k} \, {\bs{\gamma}_3^k} \label{eq:triform_6}\\
& \sum_{k=1}^8 \lambda^c_{ij, k} = z_{ij}, \;\; \sum_{k=1}^8 \lambda^s_{ij, k} = z_{ij}, \;\; \lambda^c_{ij, k} \geqslant 0, \  \lambda^s_{ij, k} \geqslant 0,~\forall k=1,\dots,8  \label{eq:triform_7}\\
 &  \begin{bmatrix}
        \lambda^c_{ij,1} + \lambda^c_{ij,2} - \lambda^s_{ij,1} - \lambda^s_{ij,2}\\
        \lambda^c_{ij,3} + \lambda^c_{ij,4} - \lambda^s_{ij,3} - \lambda^s_{ij,4} \\
        \lambda^c_{ij,5} + \lambda^c_{ij,6} - \lambda^s_{ij,5} - \lambda^s_{ij,6} \\
        \lambda^c_{ij,7} + \lambda^c_{ij,8} - \lambda^s_{ij,7} - \lambda^s_{ij,8} 
    \end{bmatrix}^\top 
    \begin{bmatrix}
    \underline{\bs{v}}_i \cdot \underline{\bs{v}}_j \\
    \underline{\bs{v}}_i \cdot \overline{\bs{v}}_j \\
    \overline{\bs{v}}_i \cdot \underline{\bs{v}}_j \\
    \overline{\bs{v}}_i \cdot \overline{\bs{v}}_j 
    \end{bmatrix} = 0, \label{eq:triform_link}
\end{align}
\end{subequations}
where $\bs{\xi}_1^k$ is the value of $v_i$ in the extreme point $\bs{\xi}^k$. The constants $\bs{\xi}_2^k$, $\bs{\xi}_3^k$, $\bs{\gamma}_1^k$, $\bs{\gamma}_2^k$, and $\bs{\gamma}_3^k$ are similarly defined. $\lambda_{ij,k}^c$ and $\lambda_{ij,k}^s$ are auxiliary multiplier variables for representing a linear combination of the extreme points. When $z_{ij} = 1$, constraints \eqref{eq:triform1} connect values of $w^R_{ij}$ and $w^I_{ij}$ with convex combinations of extreme points for trilinear terms; constraints \eqref{eq:triform_2} - \eqref{eq:triform_6} equate the values of $v_i$, $v_j$, $c_{ij}$, and $s_{ij}$ to convex combinations of their respective extreme points in $[\underline{\bs{v}}_i, \overline{\bs{v}}_i] \times [\underline{\bs{v}}_j, \overline{\bs{v}}_j]  \times [\underline{\bs{c}}_{ij}, \overline{\bs{c}}_{ij}]$. When $z_{ij} = 0$, constraints \eqref{eq:triform1} - \eqref{eq:triform_6} enforce $w^R_{ij} = w^I_{ij} = c_{ij} = s_{ij} = 0$, and impose no constraints on $v_i$ and $v_j$. Constraints \eqref{eq:triform_7} ensure that the summations of convex combination coefficients equal to 1 when line $(i, j)$ is switched on, and all the coefficients become 0 when the line is switched off. Linking constraints \eqref{eq:triform_link} connect the shared bilinear term $v_i v_j$ that appears in both trilinear terms $v_i v_j \cos(\theta_{ij})$ and $v_i v_j \sin(\theta_{ij})$.

Note that when $z_{ij} = 1$, constraints \eqref{eq:triform_2} - \eqref{eq:triform_5} and \eqref{eq:triform_7} reduce to the following constraints:
\begin{subequations}\label{eq: triform_v_on} 
\begin{align}
    & v_i = \sum_{k=1}^8 \lambda^c_{ij, k} {\bs{\xi}_1^k}  = \sum_{k=1}^8 \lambda^s_{ij, k} {\bs{\gamma}_1^k},\label{eq: triform_v_on1}\\
    & v_j = \sum_{k=1}^8 \lambda^c_{ij, k} {\bs{\xi}_2^k} = \sum_{k=1}^8 \lambda^s_{ij, k} {\bs{\gamma}_2^k}, \label{eq: triform_v_on2} \\ 
    & \sum_{k=1}^8 \lambda^c_{ij, k} = 1, \;\; \sum_{k=1}^8 \lambda^s_{ij, k} = 1. \label{eq: triform_lambda_on}
\end{align}
\end{subequations} 

We next formally show that the constraints in \eqref{eq:triform} form the \textit{tightest, or the convex hull}, relaxation for the summation of nonlinear terms of the form 
\begin{align}
z_{ij}\left( {\bs a_1} v_i v_j c_{ij} + {\bs a_2} v_i v_j s_{ij}\right) \quad \forall (i,j) \in \mc{A} \cup \mc{A}^R.    
\label{eq:summation}
\end{align}
Note that this form appears in constraints \eqref{eq: ots_sij} and \eqref{eq: ots_sji}, where ${\bs a_1}$ and ${\bs a_2}$ represent coefficients that are functions of $\gij$, $\bs{b}_{ij}$, $\bs{t}^R_{ij}$, and $\bs{t}^I_{ij}$. For this purpose, we define the following: Let $\eta = $ $(w^R_{ij},$ $ w^I_{ij},$ $c_{ij},$ $s_{ij},$ $\lambda^c_{ij,1},$ $\ldots,$ $\lambda^c_{ij,8},$ $\lambda^s_{ij, 1},$ $\ldots,$ $\lambda^s_{ij,8},$ $z_{ij},$ $v_i,$ $v_j)$ and define the set $H = \{\eta| \eta~\text{satisfies}~\eqref{eq:triform}, z_{ij} \in [0, 1]\}$. When $z_{ij} = 0$ and $z_{ij} = 1$, $H$ becomes $H^0$ and $H^1$, respectively:
\begin{align*}
  & H^0 = \Set{ \eta \ | \begin{array}{l}
    w^R_{ij} = w^I_{ij} = c_{ij} = s_{ij} = z_{ij} = 0 \\
    \lambda^c_{ij,k} = \lambda^s_{ij, k} = 0, \forall k=1,\cdots, 8\\
    v_i\in [\vlb_i, \vub_i], v_j\in [\vlb_j, \vub_j]
  \end{array}}\\
  & H^1 = \{\eta \ | \eta~\text{satisfies}~\eqref{eq:triform1}, \eqref{eq:triform_6}, \eqref{eq:triform_link} ~\text{and}~\eqref{eq: triform_v_on}\}
\end{align*}

\cite{sundar2018optimization} show, for a simpler case when $z_{ij} = 1$, that the linearization defined by $H^1$ is the convex hull of the summation terms in \eqref{eq:summation} due to the addition of equality constraints \eqref{eq:triform_link}. However, to understand the tightness (convex hull property) of the linearization \eqref{eq:triform} for the generalized ACOTS model, we present the following theorem, based on the literature of perspective formulations for disjunctive programming \citep{ceria1999convex,nagarajan2019convex}:

\begin{theorem}\label{th: convex_hull}
    $H = \conv(H^0 \cup H^1)$.
\end{theorem}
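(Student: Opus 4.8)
The plan is to read this as a disjunctive-programming / perspective statement and prove the two inclusions separately, exploiting that $H^0$ and $H^1$ are bounded polytopes. Because the extreme points $\bs{\xi}^k,\bs{\gamma}^k$ are fixed data, every relation in \eqref{eq:triform} is \emph{linear} in the coordinates of $\eta$, so $H$ is a convex polyhedron; moreover $H^0$ and $H^1$ are compact, so $\conv(H^0\cup H^1)$ is already closed and coincides with the set of all combinations $\bar z\,\eta^1+(1-\bar z)\,\eta^0$ with $\eta^1\in H^1$, $\eta^0\in H^0$, $\bar z\in[0,1]$. I would record this characterization first, since it reduces the theorem to an explicit decomposition problem and removes any worry about taking closures.

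For the inclusion $\conv(H^0\cup H^1)\subseteq H$, I would substitute $z_{ij}=0$ into \eqref{eq:triform} and check that the system collapses exactly to the defining relations of $H^0$ (the bound constraints \eqref{eq:triform_2}--\eqref{eq:triform_5} reduce to $\vlb_i\le v_i\le\vub_i$ and the like, while \eqref{eq:triform1}, \eqref{eq:triform_6}, \eqref{eq:triform_7} force $w,c,s,\lambda$ to zero), and substitute $z_{ij}=1$ to recover the relations \eqref{eq: triform_v_on} defining $H^1$. This shows $H^0,H^1\subseteq H$, and convexity of $H$ closes this direction.

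The content is in $H\subseteq\conv(H^0\cup H^1)$. Given $\eta\in H$ with $z_{ij}=\bar z$, the cases $\bar z\in\{0,1\}$ are immediate from the collapse above, so assume $\bar z\in(0,1)$ and build the two atoms by perspective (un)scaling: set $z^1=1$, $\lambda^{c,1}_{ij,k}=\lambda^c_{ij,k}/\bar z$, $\lambda^{s,1}_{ij,k}=\lambda^s_{ij,k}/\bar z$, which by \eqref{eq:triform1} and \eqref{eq:triform_6} forces $w^{R,1}=w^R_{ij}/\bar z$, $c^1=c_{ij}/\bar z$, and so on, and define $v_i^1,v_j^1$ as the corresponding scaled extreme-point sums; take $\eta^0\in H^0$ with $w,c,s,\lambda$ all zero and $v_i^0,v_j^0$ chosen to absorb the residual. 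The $w,c,s,\lambda,z$ coordinates of $\bar z\,\eta^1+(1-\bar z)\,\eta^0$ then reproduce those of $\eta$ by construction, and for the voltages the band constraints \eqref{eq:triform_2}--\eqref{eq:triform_5} guarantee that $v_i^0=(v_i-\sum_k\lambda^c_{ij,k}\bs{\xi}_1^k)/(1-\bar z)$ lands in $[\vlb_i,\vub_i]$, so $\eta^0$ is feasible for $H^0$.

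The main obstacle is verifying that $\eta^1$ actually lies in $H^1$, i.e. that it satisfies the \emph{equalities} \eqref{eq: triform_v_on1}--\eqref{eq: triform_v_on2}: these demand that the (forced) value $v_i^1$ equal the $\lambda^c$-weighted and the $\lambda^s$-weighted sums of the voltage coordinates of the extreme points \emph{simultaneously}, and likewise for $v_j^1$. Since the perspective scaling fixes $\lambda^{c,1},\lambda^{s,1}$ with no freedom, this well-definedness requires the two weighted voltage sums carried by $\eta$ to coincide; showing that the homogeneous linking constraint \eqref{eq:triform_link} (which ties the shared bilinear factor $v_iv_j$ across the two trilinear representations and is preserved under the common scaling by $1/\bar z$), together with the band constraints \eqref{eq:triform_2}--\eqref{eq:triform_5}, indeed forces this coincidence is the delicate step and the crux of the argument. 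I would organize this verification through the general perspective-reformulation results of \cite{ceria1999convex,nagarajan2019convex}, checking that the present system is exactly the perspective/homogenization of the $H^1$ description, rather than grinding it out coordinate by coordinate.
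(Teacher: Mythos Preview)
Your overall decomposition strategy matches the paper's proof exactly: both prove $\conv(H^0\cup H^1)\subseteq H$ by checking $H^0,H^1\subseteq H$ and invoking convexity, and both attack $H\subseteq\conv(H^0\cup H^1)$ by perspective-scaling $\lambda^{c},\lambda^{s},w^R,w^I,c,s$ by $1/\bar z$ and absorbing the voltage residuals into an $H^0$ atom. You also correctly isolate the only nontrivial step: showing that the forced value $v_i^1=\sum_k(\lambda^{c}_{ij,k}/\bar z)\,\bs{\xi}_1^k$ coincides with $\sum_k(\lambda^{s}_{ij,k}/\bar z)\,\bs{\gamma}_1^k$, i.e.\ that $A:=\sum_k\lambda^{c}_{ij,k}\bs{\xi}_1^k$ equals $B:=\sum_k\lambda^{s}_{ij,k}\bs{\gamma}_1^k$ (and likewise for $v_j$).

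The gap is precisely here, and your proposed resolution does not close it. The system \eqref{eq:triform} is \emph{not} the Ceria--Soares perspective hull of $H^1$ over $H^0$: the genuine disjunctive hull would carry a single auxiliary slack $v_i^0=v_i-v_i^1$ shared by \emph{both} \eqref{eq:triform_2} and \eqref{eq:triform_3}, which would indeed force $A=B$; after projecting that slack out one is left with two \emph{independent} band constraints, and these only yield $|A-B|\le(1-\bar z)(\vub_i-\vlb_i)$. The linking constraint \eqref{eq:triform_link} does not rescue the argument either, since it equates the lifted bilinear values $\sum_k\lambda^c_{ij,k}\bs{\xi}_1^k\bs{\xi}_2^k$ and $\sum_k\lambda^s_{ij,k}\bs{\gamma}_1^k\bs{\gamma}_2^k$ but says nothing about the individual marginals. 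Concretely, take $\vlb_i=\vlb_j=0.9$, $\vub_i=\vub_j=1.1$, $\bar z=\tfrac12$, put all of $\lambda^c$ on the vertex $(\vlb_i,\vub_j,\clb)$ and all of $\lambda^s$ on $(\vub_i,\vlb_j,\slb)$: then \eqref{eq:triform_link} holds (both bilinear sums equal $\tfrac12\cdot0.9\cdot1.1$), \eqref{eq:triform_2}--\eqref{eq:triform_5} are satisfied with $v_i=v_j=1$, yet $A=0.45\neq0.55=B$, so the scaled point violates \eqref{eq: triform_v_on1} and no decomposition into $H^0$ and $H^1$ exists. The paper's own proof handles this same step by asserting that \eqref{eq:triform_2}--\eqref{eq:triform_3} directly give $A\le B$ and $A\ge B$; those inequalities are not consequences of the band constraints, so the argument there carries the identical gap.
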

\proof{Proof.}
First, we prove that $\conv(H^0 \cup H^1) \subseteq H$. For any $\eta^0 \in H^0$, $\eta^0$ satisfies constraints in \eqref{eq:triform} when $z_{ij} = 0$. Similarly, for any $\eta^1 \in H^1$, $\eta^1$ satisfies constraints in \eqref{eq:triform} when $z_{ij} = 1$. Thus, $H^0 \cup H^1 \subseteq H$. Since $H$ contains only linear constraints and is thus convex, we have $\conv(H^0 \cup H^1) \subseteq H$.
    
    Next, we prove that $H \subseteq \conv(H^0 \cup H^1)$. Let $\eta^*\in H$. If $z^*_{ij} = 0$, then $\eta^*\in H^0$, and if $z^*_{ij} = 1$, then $\eta^*\in H^1$. When $z^*_{ij} \in (0,1)$, we define the following variables:
        \begin{align*}
            & \eta_0^* = \left(0, 0, \cdots, 0, \frac{v_i^* - \sum_{k=1}^8 \lambda^{c*}_{ij, k}\bs{\xi}_1^k}{1 - z_{ij}^*},\frac{v_j^* - \sum_{k=1}^8 \lambda^{c*}_{ij, k}\bs{\xi}_2^k}{1 - z_{ij}^*}\right)\\
            & \eta_1^* = \Bigg(\frac{w^{R*}_{ij}}{z^*_{ij}}, \frac{w^{I*}_{ij}}{z^*_{ij}}, \frac{c^*_{ij}}{z^*_{ij}}, \frac{s^*_{ij}}{z^*_{ij}}, \frac{\lambda^{c*}_{ij,1}}{z^*_{ij}}, \cdots, \frac{\lambda^{c*}_{ij,8}}{z^*_{ij}}, \frac{\lambda^{s*}_{ij, 1}}{z^*_{ij}},\cdots,\frac{\lambda^{s*}_{ij,8}}{z^*_{ij}}, 1, \sum_{k = 1}^8 \frac{\lambda^{c*}_{ij, k}}{z^*_{ij}} \bs{\xi}^k_1, \sum_{k = 1}^8 \frac{\lambda^{c*}_{ij, k}}{z^*_{ij}} \bs{\xi}^k_2\Bigg)
        \end{align*}
    Next, we prove that $\eta_0^* \in H^0$ and $\eta_1^* \in H^1$. 
    Because of \eqref{eq:triform_2}, we have $(1-z^*_{ij})\vlb_i \leqslant v_j^* - \sum_{k=1}^8 \lambda^{c*}_{ij, k}\bs{\xi}_2^k\leqslant (1-z^*_{ij})\vub_i$. Thus, $\frac{v_i^* - \sum_{k=1}^8 \lambda^{c*}_{ij, k}\bs{\xi}_1^k}{1 - z_{ij}^*} \in [\vlb_i, \vub_i]$. Similarly, $\frac{v_j^* - \sum_{k=1}^8 \lambda^{c*}_{ij, k}\bs{\xi}_2^k}{1 - z_{ij}^*} \in [\vlb_j, \vub_j]$. Therefore, $\eta_0^* \in H^0$.
    
    For $\eta_1^*$, $\frac{w^{R*}_{ij}}{z^*_{ij}} = \sum_{k=1}^8 \frac{\lambda^{c*}_{ij,1}}{z^*_{ij}} (\bs{\xi}_1^k, \bs{\xi}_2^k, \bs{\xi}_3^k)$ because $ w^{R*}_{ij} = \sum_{k=1}^8 \lambda^{c*}_{ij,1}(\bs{\xi}_1^k, \bs{\xi}_2^k, \bs{\xi}_3^k)$ and $z^*_{ij} \in (0,1)$. Similarly, it can be proved that $\eta_1^*$ satisfies constraints \eqref{eq:triform1}, \eqref{eq:triform_6}, \eqref{eq:triform_link}, and \eqref{eq: triform_lambda_on}. For constraint \eqref{eq: triform_v_on1}, the first equation follows directly from the definition of $\eta_1^*$, and the second equality is correct because the validity of \eqref{eq:triform_2} and \eqref{eq:triform_3} for $\eta^*$ indicates that $\sum_{k=1}^8\lambda^{s*}_{ij,k} \bs{\gamma}^k_1 + (1-z^*_{ij}) \vlb_i \leqslant \sum_{k=1}^8\lambda^{c*}_{ij,k} \bs{\xi}^k_1 + (1-z^*_{ij}) \vlb_i$ and $\sum_{k=1}^8\lambda^{s*}_{ij,k} \bs{\gamma}^k_1 + (1-z^*_{ij}) \vub_i \geq \sum_{k=1}^8\lambda^{c*}_{ij,k} \bs{\xi}^k_1 + (1-z^*_{ij}) \vub_i$, thus $\sum_{k=1}^8\lambda^{c*}_{ij,k} \bs{\xi}^k_1 = \sum_{k=1}^8\lambda^{s*}_{ij,k} \bs{\gamma}^k_1 \Rightarrow \sum_{k=1}^8\frac{\lambda^{c*}_{ij,k}}{z^*_{ij}} \bs{\xi}^k_1 = \sum_{k=1}^8 \frac{\lambda^{s*}_{ij,k}}{z^*_{ij}} \bs{\gamma}^k_1$, which means $\eta_1^*$ satisfies the second equality in \eqref{eq: triform_v_on1}. With similar arguments, $\eta_1^*$ is also feasible for \eqref{eq: triform_v_on2}. Therefore, $\eta_1^* \in H^1$.
    
    Now note that $\eta^* = (1 - z^*_{ij})\eta_0^* + z^*_{ij} \eta_1^*$, which means  $H\subseteq \conv(H^0 \cup H^1)$.\Halmos
    \endproof

To the best of our knowledge, this is the first  attempt at applying the convex hull-based extreme-point formulation for the ACOTS QC relaxation. Previous works \citep{hijazi2017convex,lu2017optimal,bestuzheva2020convex} have utilized recursive McCormick-based relaxations which are not as tight as the above formulation, as the former relaxation when applied to \eqref{eq:summation} is not as tight as $H^1$.

(4) {\it Other valid constraints for strengthening the on/off QC relaxation $(\forall (i,j)\in\mc{A})$: }We also add the following constraints to strengthen the on/off QC relaxation:
\begin{subequations} 
\label{eq: qc_constr}
\begin{align}
&\label{eq: pad} \tan(\thetalb) w^R_{ij} \leqslant w^I_{ij} \leqslant \tan(\thetaub) w^R_{ij} \\
& \label{eq: ots_lnc1} \vsigma_i \vsigma_j (\cos(\phiij) w^R_{ij}  + \sin(\phiij) w^I_{ij}) - \vub_j \cos(\deltaij)\vsigma_i w^z_{ij} \nonumber\\ 
    &\hspace{1mm} - \vub_i \cos(\deltaij) \vsigma_i w^z_{ji} \geq \vub_i \vub_j \cos(\deltaij)(\vlb_i \vlb_j - \vub_i \vub_j) z_{ij} \\
& \label{eq: ots_lnc2} \vsigma_i \vsigma_j (\cos(\phiij) w^R_{ij}  + \sin(\phiij) w^I_{ij}) - \vlb_j \cos(\deltaij)\vsigma_i w^z_{ij}\nonumber\\&\hspace{1mm} - \vlb_i \cos(\deltaij) \vsigma_i w^z_{ji} \geq \vlb_i \vlb_j \cos(\deltaij)(\vub_i \vub_j - \vlb_i \vlb_j) z_{ij} \\
& \label{eq: l_strenghten} |S_{ij}|^2 \leqslant \frac{w_i}{|\Tij|^2}l_{ij}  \\
& \label{eq: ots_rsoc} l_{ij} = |\Yij|^2\left(\frac{w^z_{ij}}{|\Tij|^2} + w^z_{ji} - 2(\tRij w^R_{ij} + \tIij w^I_{ij})/|\Tij|^2\right)\nonumber\\&\hspace{1cm} - \frac{|\Ycij|^2}{|\Tij|^2} w^z_{ij} + 2(\gcij p_{ij} - \bcij q_{ij})\\
& \label{eq: l_bd} 0 \leqslant l_{ij}  \leqslant \lijbar {\cblue z_{ij}},
\end{align}
\end{subequations}
where $\vsigma_i = \vlb_i + \vub_i$, $\phiij = (\thetaub + \thetalb)/2$, $\deltaij = (\thetaub - \thetalb) / 2$ and {\cblue $\lijbar = \frac{\left |\bs{T}_{ij}\right |^2 \sijbarsq}{\underline{\bs{v}}_i^2}$}. Constraint \eqref{eq: pad} is the phase angle difference constraint, which is a relaxation of the equality $\tan(\bs{\theta}_{ij}) = \frac{w^R_{ij}}{w^I_{ij}}$. Constraints \eqref{eq: ots_lnc1} and \eqref{eq: ots_lnc2} are the ``lifted nonlinear cuts" from \citep{bestuzheva2020convex}, derived using trigonometric identities. Constraints \eqref{eq: l_strenghten} and \eqref{eq: ots_rsoc} use the relationship between current magnitude and power flow to tighten the QC relaxation. \eqref{eq: l_bd} bounds the squared current magnitude. Note that while \eqref{eq: pad}, \eqref{eq: l_strenghten}, and \eqref{eq: l_bd} are the same as their counterparts in the ACOPF model, they are still valid for the ACOTS setting. On the other hand, \eqref{eq: ots_lnc1}, \eqref{eq: ots_lnc2}, and \eqref{eq: ots_rsoc} are modified under the ACOTS case, so that when line $(i,j)$ is switched off, constraints \eqref{eq: ots_lnc1} and \eqref{eq: ots_lnc2} become redundant, {\cblue while constraints \eqref{eq: ots_rsoc} and \eqref{eq: l_bd} ensure that $l_{ij}$ goes to 0}. The use of \eqref{eq: ots_rsoc} is new for the ACOTS QC relaxation.

Putting all the constraints together, we obtain the following QC relaxation for the ACOTS problem: 
\begin{subequations} 
\label{eq: qcots}
\begin{align}
(\textbf{ACOTS-QC}): \min~~ &\eqref{eq: ots_objective}\\
\st~~ &\eqref{eq: ots_s_balance}, \eqref{eq: ots_link} - \eqref{eq: z_var}, \eqref{eq:linearize_sij},\hspace{-1cm}\\
& \eqref{eq:MC-q}, \eqref{eq: ots_cosine}-\eqref{eq:triform}, \eqref{eq: qc_constr}.
\end{align}
\end{subequations}

The relationship between the solution sets of different formulations is simplified and shown in Figure \ref{fig:venn_relax}. Here, ACOPF is the non-convex polar formulation, which is equivalent to the ACOTS model with all the lines switched on; ACOPF-QC is the QC relaxation for ACOPF from \citep{coffrin2015qc}.
\begin{figure}[htbp]
    \centering
    \includegraphics[width=0.3\textwidth]{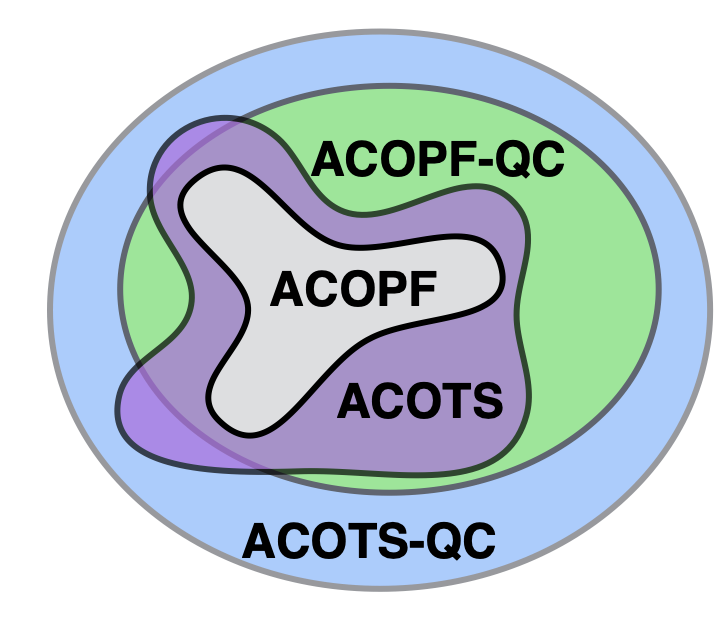}
    \caption{Venn diagram for solution sets of different formulations.}
    \label{fig:venn_relax}
\end{figure}

{\cred Tight convex relaxations for ACOTS such as (ACOTS-QC) can serve several purposes. The solution to a tight relaxation problem is a good approximation for optimal ACOTS decisions. Also, they provide tight lower bounds for evaluating the quality of a feasible ACOTS solution, and can be used to prove global optimality of a feasible solution when the optimality gap is 0. In addition, the convex relaxation can be used in a two-stage process for minimizing the cost of ACOPF, where we first change the network topology using line switching solutions from the relaxation, and then solve the ACOPF problem on the updated network. In what follows, we show how to further tighten (ACOTS-QC) with \cycconstrs \ and OBBT.}
\section{Cycle-Based On/Off Polynomial Constraints}
In this section, we present a novel type of \cycconstrs\ based on lifted trigonometric auxiliary variables $c_{ij}$ and $s_{ij}$. We use both these new \cycconstrs \ and \cycconstrs \ with voltage product variables $w^R_{ij}$, $w^I_{ij}$, and $w_i$ from \citep{kocuk2016strong} to strengthen the on/off QC relaxation for ACOTS. Since those constraints are polynomial functions in multilinear terms, we linearize them with the extreme-point representation. {\cred We also develop novel cutting planes to incorporate those constraints more efficiently via branch-and-cut framework.}
\subsection{Formulating Lifted Cycle Constraints}
The \cycconstrs \ are formulated based on the fact that for any given cycle $\mc{C}$ in the transmission network, the voltage angle differences of all lines in $\mc{C}$ sum up to 0. More formally, let $(\hat{v}_1, \hat{v}_2,\hdots,\hat{v}_n, \hat{v}_1)$ be a vertex sequence for the cycle $\mc{C}$ of length $n$, and let the cycle be represented by its lines: $\mc{C} = \{(\hat{v}_1, \hat{v}_2), (\hat{v}_2, \hat{v}_3),\hdots,(\hat{v}_n, \hat{v}_1)\}$, then we have $\sum_{(i,j)\in\mc{C}} \theta_{ij} = 0$.

The method we use to derive \cycconstrs \ for the QC relaxation is similar to that of \citep{kocuk2016strong} where \cycconstrs \ for the SOC relaxation are derived. However, unlike in the SOC relaxation, the main advantage of the QC relaxation is that we have direct access to both $(c_{ij}$, $s_{ij})$ and $(w^R_{ij},w^I_{ij})$ variables, enabling us to formulate additional \cycconstrs \ that could further enhance the relaxation quality. 

In what follows, we derive \cycconstrs \ for cycles with 3 and 4 nodes. We call the resulting constraints as 3-cycle constraints and 4-cycle constraints, respectively. We first develop those constraints without considering switching (on/off) decisions, and then demonstrate how to reformulate them to include those decisions using a tight big-M formulation.
\subsubsection{3-Cycle Constraints}
For a cycle of three nodes $i$, $j$ and $k$, we have $\theta_{ij} + \theta_{jk} + \theta_{ki} = 0$ or equivalently $\theta_{ik} = \theta_{ij} + \theta_{jk}$, which indicates that $\cos(\theta_{ik}) = \cos(\theta_{ij} + \theta_{jk})$ and $\sin(\theta_{ik}) = \sin(\theta_{ij} + \theta_{jk})$. Expanding the right-hand sides and replacing the trigonometric functions with their corresponding lifted variables, we get the following nonlinear 3-cycle constraints:
\begin{subequations}
\label{eq:3_cyc_12}
\begin{align}
c_{ik} &= c_{ij}c_{jk} - s_{ij}s_{jk}\label{eq:3_cyc_1_1} \\
s_{ik} &= c_{ij}s_{jk} + s_{ij}c_{jk}, \label{eq:3_cyc_1_2}
\end{align}
\end{subequations}
Though simple, these are novel and are applicable to both ACOPF and ACOTS problems. We can then obtain the \cycconstrs \ in \citep{kocuk2016strong} by multiplying both sides of \eqref{eq:3_cyc_1_1} and \eqref{eq:3_cyc_1_2} with $v_i v_j^2 v_k$, and using the relationships in \eqref{eq: ots_wii} and \eqref{eq:w_nonconv} (ignoring switching decisions in \eqref{eq:w_nonconv} for now):
\begin{subequations}
\label{eq:3_cyc_34}
\begin{align} 
w_jw^R_{ik} &= w^R_{ij}w^R_{jk} - w^I_{ij}w^I_{jk}, \\  
w_jw^I_{ik} &= w^R_{ij}w^I_{jk} + w^I_{ij}w^R_{jk}.
\end{align}
\end{subequations}
Alternatively, constraints \eqref{eq:3_cyc_34} can also be derived from minor-based reformulation as in \citep{kocuk2018matrix}.

For brevity, in the following, we call \cycconstrs \ with $c_{ij}$ and $s_{ij}$ variables as \cycconstrs \ \textit{in the $c$-$s$ space}, and \cycconstrs \ with $w^R_{ij}$, $w^I_{ij}$, and $w_i$ variables as \cycconstrs \ \textit{in the $w$ space}. 

We also derive two more sets of \cycconstrs \ by considering other permutations of the equality $\theta_{ij} + \theta_{jk} + \theta_{ki} = 0$, including $\theta_{jk} + \theta_{ki} = \theta_{ji}$ and $\theta_{ki} + \theta_{ij} = \theta_{kj}$. 
Though they look equivalent in the nonlinear forms, the linearized versions (see Section \ref{sec: cycle_ex_form}) of these permutated constraints do not necessarily dominate one another, and we add all of them to tighten the relaxation. 

Another type of \cycconstrs \ can be derived from the equality $\theta_{ij} + \theta_{jk} - \theta_{ik} = 0$, which leads to the following constraints in the $c$-$s$ space:
\begin{subequations}\label{eq:3_cyc_v2}
\begin{alignat}{4}
c_{ij} c_{jk} c_{ik} + c_{ij} s_{jk} s_{ik} - s_{ij} s_{jk} c_{ik} + s_{ij} c_{jk} s_{ik} &= 1 \label{eq:3_cyc_v2_1}\\
s_{ij} c_{jk} c_{ik} + s_{ij} s_{jk} s_{ik} + c_{ij} s_{jk} c_{ik} - c_{ij} c_{jk} s_{ik} &= 0. \label{eq:3_cyc_v2_2}
\end{alignat}
\end{subequations}

The following proposition shows the equivalence between constraints \eqref{eq:3_cyc_12} and \eqref{eq:3_cyc_v2}. This proposition is similar to the Proposition 4.1 in \citep{kocuk2016strong}, but our result is in the $c$-$s$ space rather than the $w$ space, and we simplify the presentation of the result. Also, we provide a new way to prove this result.
\begin{proposition}
For all $(i,j)\in\mc{A}$, if $c_{ij}$ and $s_{ij}$ satisfy $c_{ij}^2 + s_{ij}^2 = 1$, then $\{(c, s): \eqref{eq:3_cyc_12} \text{ holds}\} = \{(c,s): \eqref{eq:3_cyc_v2} \text{ holds}\}$.
\end{proposition}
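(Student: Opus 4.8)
The plan is to collapse both nonlinear systems into a single compact linear system in the two unknowns $c_{ik}$ and $s_{ik}$, treating everything indexed by $ij$ and $jk$ as fixed coefficients. To this end I would introduce the abbreviations $A := c_{ij}c_{jk} - s_{ij}s_{jk}$ and $B := c_{ij}s_{jk} + s_{ij}c_{jk}$, which are precisely the right-hand sides of \eqref{eq:3_cyc_1_1} and \eqref{eq:3_cyc_1_2}. With this notation, \eqref{eq:3_cyc_12} is simply the pair of equations $c_{ik} = A$ and $s_{ik} = B$.

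The first step is to rewrite \eqref{eq:3_cyc_v2} by grouping its terms according to the factors $c_{ik}$ and $s_{ik}$. A direct inspection shows that \eqref{eq:3_cyc_v2_1} becomes $A\,c_{ik} + B\,s_{ik} = 1$ and that \eqref{eq:3_cyc_v2_2} becomes $B\,c_{ik} - A\,s_{ik} = 0$; that is, \eqref{eq:3_cyc_v2} is equivalent to the $2\times 2$ linear system with coefficient matrix $\left[\begin{smallmatrix} A & B \\ B & -A \end{smallmatrix}\right]$ acting on $(c_{ik},s_{ik})^\top$ and right-hand side $(1,0)^\top$. The key algebraic fact driving everything is the identity $A^2 + B^2 = 1$, which I would establish by expanding the two squares, cancelling the cross terms, and invoking the unit-circle hypotheses $c_{ij}^2 + s_{ij}^2 = 1$ and $c_{jk}^2 + s_{jk}^2 = 1$, so that the sum telescopes to $(c_{ij}^2 + s_{ij}^2)(c_{jk}^2 + s_{jk}^2) = 1$.

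The two inclusions then follow immediately. For the forward direction (that \eqref{eq:3_cyc_12} implies \eqref{eq:3_cyc_v2}), substitute $c_{ik} = A$ and $s_{ik} = B$ into the linear system: the first equation reads $A^2 + B^2 = 1$, which holds by the identity, and the second reads $AB - BA = 0$. For the reverse direction, observe that the determinant of the coefficient matrix is $-(A^2 + B^2) = -1 \neq 0$, so the system has a unique solution; solving it (for instance by Cramer's rule, again using $A^2 + B^2 = 1$) returns exactly $c_{ik} = A$ and $s_{ik} = B$, which is \eqref{eq:3_cyc_12}.

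The only place requiring care is the bookkeeping in the grouping step, together with the observation that $A^2 + B^2 = 1$ needs the unit-circle assumption on both $(c_{ij},s_{ij})$ and $(c_{jk},s_{jk})$; the condition on $(c_{ik},s_{ik})$ is never used and is in fact automatically implied on the solution set, since $c_{ik}^2 + s_{ik}^2 = A^2 + B^2 = 1$. Once the problem is phrased as an invertible linear system whose coefficient matrix has unit norm, there is no genuine obstacle. This linear-algebraic framing is exactly what lets us bypass the angle-addition/trigonometric manipulations of \cite{kocuk2016strong} and obtain the result directly in the $c$-$s$ space.
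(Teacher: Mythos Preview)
Your proof is correct and follows essentially the same strategy as the paper: both introduce the abbreviations $a=A=c_{ij}c_{jk}-s_{ij}s_{jk}$ and $b=B=c_{ij}s_{jk}+s_{ij}c_{jk}$, rewrite \eqref{eq:3_cyc_v2} as the $2\times 2$ linear system $Ac_{ik}+Bs_{ik}=1$, $Bc_{ik}-As_{ik}=0$, and then argue invertibility.

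The one genuine difference is \emph{which} unit-circle hypothesis is invoked. The paper's forward direction multiplies \eqref{eq:3_cyc_1_1} by $c_{ik}$ and \eqref{eq:3_cyc_1_2} by $s_{ik}$ and uses $c_{ik}^2+s_{ik}^2=1$; its reverse direction solves the same linear system for $(a,b)$ rather than $(c_{ik},s_{ik})$, again relying on $c_{ik}^2+s_{ik}^2=1$ for the determinant. You instead establish $A^2+B^2=1$ from the hypotheses on $(ij)$ and $(jk)$ only, and then solve for $(c_{ik},s_{ik})$. As you correctly note, your route never touches the assumption on the pair $(ik)$, so you actually prove a marginally sharper statement: the equivalence holds as soon as $c_{ij}^2+s_{ij}^2=1$ and $c_{jk}^2+s_{jk}^2=1$, with $c_{ik}^2+s_{ik}^2=1$ following automatically on the solution set. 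The paper's version is symmetric in a different way (it treats $(a,b)$ and $(c_{ik},s_{ik})$ on equal footing), but yours is slightly more economical with the hypotheses.
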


\proof{Proof.}
We first prove that $\{(c, s): \eqref{eq:3_cyc_12} \text{ holds} \}\subseteq \{(c,s): \eqref{eq:3_cyc_v2} \text{ holds}\}$. If the variables $c$ and $s$ satisfy \eqref{eq:3_cyc_1_1}-\eqref{eq:3_cyc_1_2}, we multiply both sides of \eqref{eq:3_cyc_1_1} with $c_{ik}$ and both sides of \eqref{eq:3_cyc_1_2} with $s_{ik}$, then sum up those two equations: 
\[
c_{ik}^2 + s_{ik}^2 = c_{ik} c_{ij} c_{jk} - c_{ik} s_{ij} s_{jk} + s_{ik} c_{ij} s_{jk} + s_{ik} s_{ij} c_{jk}.
\]

Since the left-hand side is equal to 1, this equation is equivalent to constraint \eqref{eq:3_cyc_v2_1}. Similarly, we obtain constraint \eqref{eq:3_cyc_v2_2} by multiplying both sides of \eqref{eq:3_cyc_1_1} with $s_{ik}$ and both sides of \eqref{eq:3_cyc_1_2} with $c_{ik}$, and deduct the second equation from the first.

For the reverse direction, if $c$ and $s$ satisfy \eqref{eq:3_cyc_v2}, let $a = c_{ij}c_{jk} - s_{ij}s_{jk}$ and $b = c_{ij}s_{jk} + s_{ij}c_{jk}$, we can rewrite \eqref{eq:3_cyc_v2_1} and \eqref{eq:3_cyc_v2_2} as $c_{ik}a + s_{ik} b = 1$ and $c_{ik}b - s_{ik} a = 0$, respectively.
Solving for $a$ and $b$, we get $a = c_{ik}$ and $b = s_{ik}$, which are equivalent to constraints \eqref{eq:3_cyc_1_1}-\eqref{eq:3_cyc_1_2}.\Halmos
\endproof
\subsubsection{4-Cycle Constraints}
For a 4-cycle with nodes $\{i, j, k, l\}$, we similarly derive \cycconstrs \ based on the equality 
$\theta_{ij} + \theta_{jk} + \theta_{kl} + \theta_{li}= 0$. More specifically, for the permutation $\theta_{ij} + \theta_{kl}= \theta_{il} - \theta_{jk}$, we derive the following \cycconstrs \:
\begin{subequations}\label{eq: 4-cylce}
\begin{align}
& c_{ij}c_{kl} - s_{ij}s_{kl} = c_{il}c_{jk} + s_{il}s_{jk}\\
& c_{ij}s_{kl} + s_{ij}c_{kl} = - c_{il}s_{jk} + s_{il}c_{jk} \\
& w^R_{ij}w^R_{kl} - w^I_{ij}w^I_{kl} = w^R_{il}w^R_{jk} + w^I_{il}w^I_{jk}\\
& w^R_{ij}w^I_{kl} + w^I_{ij}w^R_{kl} = - w^R_{il}w^I_{jk} + w^I_{il}w^R_{jk}.
\end{align}
\end{subequations}

For the other two permutations, i.e., $\theta_{ij} + \theta_{jk} = \theta_{il} - \theta_{kl}$ and $\theta_{jk} + \theta_{kl} = \theta_{il} - \theta_{ij}$, we can derive similar \cycconstrs. Note that for those permutations, the \cycconstrs \ in the $w$ space contain trilinear terms, thus we do not include those constraints in our implementation for efficiency purposes.
\subsubsection{On/off cycle constraints for ACOTS}
To reformulate \cycconstrs \ for ACOTS and include switching decisions, we use the big-M formulation. As an example, we demonstrate the formulation on constraints \eqref{eq:3_cyc_12}, i.e., the 3-cycle constraints in the $c$-$s$ space. Constraints \eqref{eq:3_cyc_12} are only valid when all lines in the 3-cycle $\mc{C}$ are switched on, which is ensured by the following big-M constraints:
\begin{subequations}
\label{eq:ots_3_cyc_12}
\begin{align}
&- 3 \widehat{z} \leqslant c_{ik} - c_{ij}c_{jk} + s_{ij}s_{jk} \leqslant 3 \widehat{z}\\
&- 3 \widehat{z} \leqslant s_{ik} - c_{ij}s_{jk} - s_{ij}c_{jk}  \leqslant 3 \widehat{z}.
\end{align}
\end{subequations}
Here, $\widehat{z} = \sum_{(l,m)\in \mc{C}} (1 - z_{lm})$ and we use ``3" as the big-M constant. It is valid because $c_{ij} \in [0, 1]$ and $s_{ij} \in [-1, 1]$ for the worst-case bounds of $\theta_{ij} \in [-\frac{\pi}{2},\frac{\pi}{2}]$. Although, this could be further improved if the angle-difference bounds are tighter. We also include similar on/off constraints for all 4-cycles with appropriate big-M constants.  
\subsection{Extreme-Point Representation}\label{sec: cycle_ex_form}
The \cycconstrs \ contain bilinear terms, which are usually linearized with McCormick relaxation in the literature \citep{kocuk2016cycle, kocuk2018matrix}. We instead use the extreme-point representation to linearize those constraints, which is \textit{guaranteed to capture the convex hull} of the \cycconstrs \ for a given cycle (including all permutations in the $c$-$s$ or $w$ space). 

For example, let
$x_i^{\mathbf{c}} \ \forall i=1,\hdots, 6$ represent variables $c_{ij}, c_{jk}, c_{ik}, s_{ij}, s_{jk}, s_{ik}$, respectively. We first rewrite 3-cycle constraints \eqref{eq:3_cyc_12} and {\cblack its counterparts by permutation} as follows: 
\begin{subequations}\label{eq: 3_cyc_lifted}
\begin{align}
    & x_3^{\mathbf{c}} = x_1^{\mathbf{c}} x_2^{\mathbf{c}} - x_4^{\mathbf{c}} x_5^{\mathbf{c}}, \quad 
    x_6^{\mathbf{c}} = x_1^{\mathbf{c}} x_5^{\mathbf{c}} + x_2^{\mathbf{c}} x_4^{\mathbf{c}} \\
    & x_1^{\mathbf{c}} = x_2^{\mathbf{c}} x_3^{\mathbf{c}} + x_5^{\mathbf{c}} x_6^{\mathbf{c}}, \quad
    x_4^{\mathbf{c}} = x_2^{\mathbf{c}} x_6^{\mathbf{c}} - x_3^{\mathbf{c}} x_5^{\mathbf{c}} \\
    & x_2^{\mathbf{c}} = x_1^{\mathbf{c}} x_3^{\mathbf{c}} + x_4^{\mathbf{c}} x_6^{\mathbf{c}}, \quad
    x_5^{\mathbf{c}} = x_1^{\mathbf{c}} x_6^{\mathbf{c}} - x_3^{\mathbf{c}} x_4^{\mathbf{c}}.
\end{align}
\end{subequations}

Let binary variable $\zc$ equal 1 if and only if all lines in the cycle $\mc{C} = \{(i,j), (j, k), (k, i)\}$ are switched on. $\xc_{j_1 j_2}$ is a lifted variable for $\xc_{j_1} \xc_{j_2}$. We can linearize the constraint $x_3^{\mathbf{c}} = x_1^{\mathbf{c}} x_2^{\mathbf{c}} - x_4^{\mathbf{c}} x_5^{\mathbf{c}}$ and connect the constraint with line switching decisions as follows:
\begin{align}\label{eq: ots_cyc3_conv2}
\min(0, \xclb_3)(1 - \zc) \leqslant x_3^{\mathbf{c}} - x_{12}^{\mathbf{c}} + x_{45}^{\mathbf{c}} \leqslant \max(0, \xcub_3)(1 - \zc).
\end{align}

We will explain this constraint in more detail at the end of this section after introducing constraints \eqref{eq: cyc3_conv}. Other constraints in \eqref{eq: 3_cyc_lifted} can be linearized in a similar way.

In addition to the linearization above, we have the following constraints in the extreme-point representation for constraints \eqref{eq: 3_cyc_lifted} (with switching decisions added): 
\begin{subequations}\label{eq: cyc3_conv}
\begin{align}
    & \sum_{i=1}^{64} \lambda^{cs}_i = \zc \label{eq:ots_cyc_3_conv_4_5_1} \\
    & \lambda^{cs}_i \geqslant 0  && \forall i=1,\hdots, 64 \label{eq:ots_cyc_3_conv_4_5_2} \\
    & x_j^{\mathbf{c}} \geq \xclb_j \left(\sum_{i: (\mathcal{X}^i_j = \xclb_j)} \lambda^{cs}_i \right)+ \xcub_j \left(\sum_{i: (\mathcal{X}^i_j = \xcub_j)} \lambda^{cs}_i\right) + \min(0, \xclb_j)(1 - \zc) && \forall j=1,\ldots,6 \label{eq: ots_cyc_3_conv_6_1}\\
    & x_j^{\mathbf{c}} \leqslant \xclb_j \left(\sum_{i: (\mathcal{X}^i_j = \xclb_j)} \lambda^{cs}_i \right)+ \xcub_j \left(\sum_{i: (\mathcal{X}^i_j = \xcub_j)} \lambda^{cs}_i\right) + \max(0, \xcub_j)(1 - \zc)  && \forall j=1,\ldots,6 \label{eq: ots_cyc_3_conv_6_2}\\
    & \xc_{j_1 j_2} = \sum_{i=1}^{64} \lambda^{cs}_i \left( \mathcal{X}^i_{j_1} \mathcal{X}^i_{j_2} \right)&& \forall (j_1, j_2)\in \mc{P}\label{eq: cyc_3_conv_7}\\
    & 1 - \sum_{(i,j)\in\mc{C}} (1 - z_{ij}) \leqslant \zc \leqslant \frac{1}{|\mc{C}|} \sum_{(i,j)\in\mc{C}} z_{ij}\label{eq: zc_z_1}\\
    & \zc \in\{0,1\}\label{eq: zc_z_2}
\end{align}
\end{subequations}
where $\lambda^{cs}_i$ is an auxiliary variable. $\mc{P} =$ \{(1,2), (1,3), (1,5), (1,6), (2,3), (2,4), (2,6), (3,4), (3,5), (4,5), (4,6), (5,6)\}. $\mathcal{X}$ can be viewed as a matrix of size $2^6 \times 6$, such that every row represents all possible combinations of the lower and upper bounds of variables $x_j^{\mathbf{c}} \in [\xclb_j,\xcub_j] \ \forall j=1,\hdots,6$. Constraints \eqref{eq:ots_cyc_3_conv_4_5_1} and \eqref{eq:ots_cyc_3_conv_4_5_2} set bounds for auxiliary multiplier variables. When $\zc = 1$, constraints \eqref{eq: ots_cyc_3_conv_6_1}, \eqref{eq: ots_cyc_3_conv_6_2}, and \eqref{eq: cyc_3_conv_7} represent the convex hull consisting of variables $\xc_j~(\forall j)$ and $\xc_{j_1 j_2}~(\forall (j_1, j_2))$; when $\zc = 0$ constraints \eqref{eq: ots_cyc_3_conv_6_1} and \eqref{eq: ots_cyc_3_conv_6_2} become redundant. Constraint \eqref{eq: zc_z_1} connects $\zc$ and $z_{ij}$: when all lines are switched on, \eqref{eq: zc_z_1} fixes $\zc$ to 1. If any line is switched off, $1 - \sum_{(i,j)\in\mc{C}} (1 - z_{ij}) \leqslant 0$ and $\frac{1}{|\mc{C}|} \sum_{(i,j) \in\mc{C}} z_{ij} \in [0, 1)$, which enforce $\zc = 0$. Also note that when $\zc = 0$, \eqref{eq:ots_cyc_3_conv_4_5_1}, \eqref{eq:ots_cyc_3_conv_4_5_2} and \eqref{eq: cyc_3_conv_7} ensure $x_{12}^{\mathbf{c}} = x_{45}^{\mathbf{c}} = 0$, and constraint \eqref{eq: ots_cyc3_conv2} becomes redundant. We can similarly derive the convex hull formulation for 3-cycle constraints in the $w$ space and for 4-cycle constraints. 

{\cred
To show the tightness of extreme-point formulation compared with McCormick relaxation, we use the scatter plot method, similar to that of \citep{luedtke2012some}. We apply the two different relaxation methods for the summation of bilinear terms $\sum_{(j_1, j_2)\in\mc{P}} \xc_{j_1} \xc_{j_2}$. Without loss of generality, we set the domain of $\xc_{i}, i=1,...,6$ as $[-1, 1]$, which include both positive and negative numbers. We randomly generate 5,000 samples of those points, following a uniform distribution in their domain. For each sample, we then obtain the difference between upper and lower bounds of the summation with the two relaxation methods, and obtain the scatter plot in Figure \ref{fig: scatter_cyc}. In the scatter plot, each point (i.e., blue spot) corresponds to the result of one sample. All the points are above the (grey) diagonal line, which implies that the extreme-point representation is either tighter or as tight as the McCormick relaxation.}

\begin{figure}[htbp]
    \centering
    \includegraphics[width=0.4\textwidth]{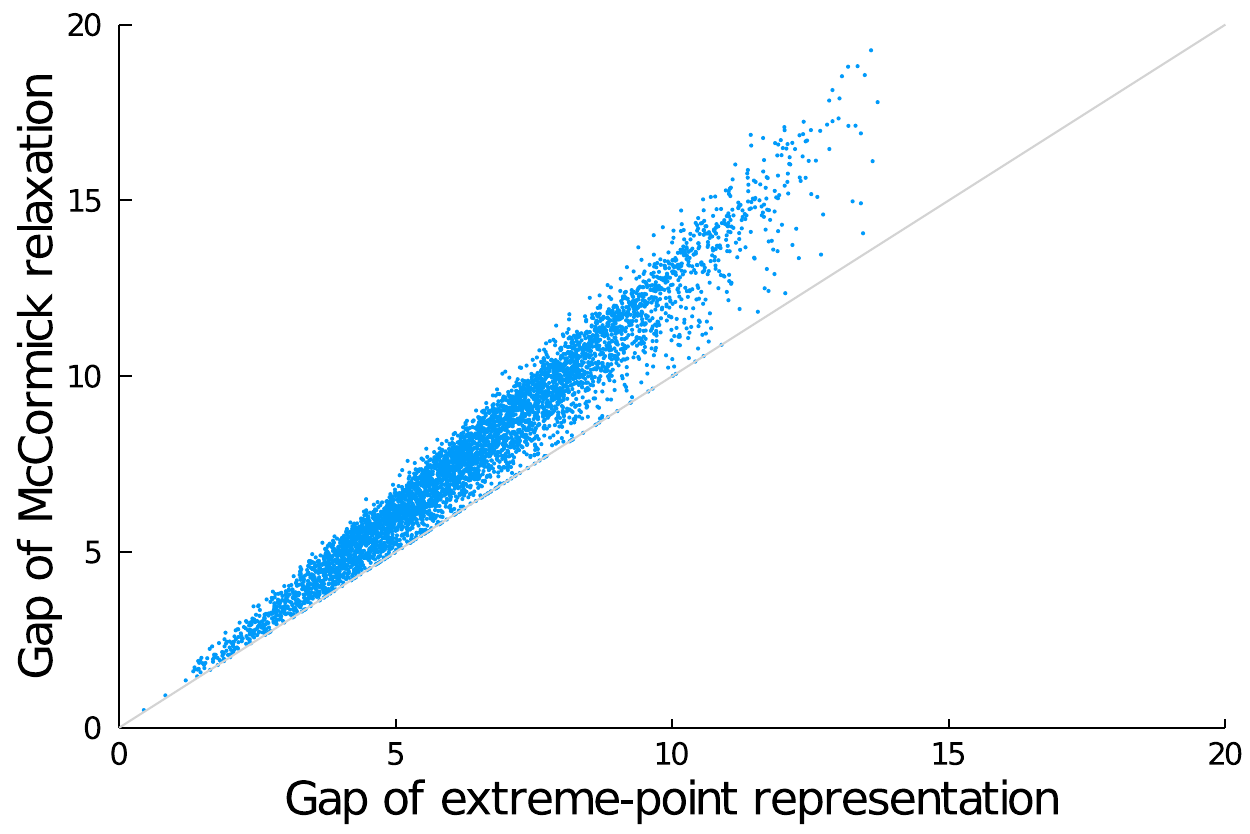}
    \caption{\small Scatter plot comparing extreme-point formulation with McCormick relaxation for summation of bilinear terms.}
    \label{fig: scatter_cyc}
\end{figure}

\subsection{Branch-and-Cut Algorithm for Lifted Cycle Constraints}
\label{subsec:b_and_c}
{\cred 
The size of extreme-point formulation for \cycconstrs \ grows quickly with the number of cycles in the network. Therefore, instead of adding all of those constraints at once, we use a separation scheme which generates cutting planes only when the linearized \cycconstrs \ are violated. 

Due to binary line switching decisions in the linearized \cycconstrs, the separation problem is not a linear program (LP). To generate cutting planes that separates infeasible solutions, we first ignore the line switching decisions in the linearized \cycconstrs \ and generate Benders feasibility cuts if those constraints are violated. We then incorporate binary variables into the Benders cuts via disjunctive programming.

First, we describe how to generate Benders cuts without considering line switching decisions. At the start of the cutting-plane algorithm we solve the ACOTS-QC model without any \cycconstrs, and obtain optimal solutions for $c$-$s$ and $w$ variables. Then for each cycle we solve a feasibility problem consisting of all the \cycconstrs \ (within the $c$-$s$ or $w$ space) in the extreme-point formulation without line switching decisions, while fixing $c$-$s$ or $w$ variables to their optimal values. If this feasibility problem is feasible, then none of the linearized \cycconstrs \ is violated, so we do not need to add any cut; otherwise, we generate a Benders feasibility cut, and add this cut back to the ACOTS-QC model and solve it again. The algorithm terminates if at one iteration the ACOTS-QC model solution is feasible to the \cycconstrs \ for all cycles.

For example, for a 3-cycle with nodes $i$, $j$, and $k$, let $\xc = (c_{ij}, c_{jk}, c_{ik}, s_{ij}, s_{jk}, s_{ik})$ and let $\xcstar$ be an optimal solution of the ACOTS-QC model. We solve the following separation problem: 
\begin{subequations}\label{eq: separation}
\begin{alignat}{3}
    \min~~&0\\
    \st~~&\text{extreme-point representation of \eqref{eq: 3_cyc_lifted}} \\
    & \xc = \bs{x}^{\mathbf{c}*}.
\end{alignat}
\end{subequations} 
If this problem is infeasible, we can generate a Benders feasibility cut in the following form:
\begin{align}\label{eq: acopf_benders}
    \betabar^\top \xc \leq \bs{b}
\end{align}
where $\betabar$ and $\bs{b}$ are the coefficient vector and the constant in the Benders cut, respectively.
Now we consider the impact of line switching decisions. The Benders cut should be redundant when any line in a cycle is turned off. To ensure this, we reformulate Benders cuts via disjunctive programming. Again, we use the 3-cycle constraints in the $c$-$s$ space to demonstrate how this works. Remember the binary variable $\zc$ that equals 1 if and only if all lines in the cycle $\mc{C}$ are turned on. The Benders cut \eqref{eq: acopf_benders} should only be active when $\zc=1$. In other words, the feasible region defined by the reformulated Benders cut is a union of the following two sets:
\begin{align*}
    &\Gamma_0 = \{(\xc, \zc): \zc = 0,~ \min(0, \xclb) \leq \xc \leq \max(0, \xcub) \}\\
    & \Gamma_1 = \{(\xc, \zc): \zc = 1,~\betabar^\top \xc \leq \bs{b}\}
\end{align*}
where $\xclb$ and $\xcub$ are lower and upper bounds of $\xc$. Using disjunctive programming, the following constraint is valid for the convex hull of $\Gamma_0\cup \Gamma_1$:
\begin{align}\label{eq: acots_benders}
    \betabar^\top \xc \leq \zc \bs{b} + (1 - \zc)\Bigg(\sum_{i\in\mc{N}: \betabar_i < 0} \betabar_i\min(0, \xclb_i) + \sum_{i\in\mc{N}: \betabar_i >0} \betabar_i\max(0, \xcub_i)\Bigg)
\end{align}
where $\mc{N}$ is the set of indices for $\betabar$ and $\xc$. Intuitively, when $\zc = 1$, \eqref{eq: acots_benders} is exactly the Benders cut \eqref{eq: acopf_benders}; when $\zc = 0$, \eqref{eq: acots_benders} is always satisfied and thus becomes redundant. 

The separation scheme needs to solve the mixed-integer quadratic ACOTS-QC model at each iteration, which is very inefficient. This is why we use a branch-and-cut method, where the mixed-integer quadratic ACOTS-QC model is only solved once with the branch-and-bound algorithm, and the Benders cuts are added at integral nodes of the branch-and-bound tree. More specifically, at each integral node, we obtain the optimal values of trigonometric terms $\xc$ and switching decisions $z_{ij}$, and denote them respectively by $\xcstar$ and $\zstar$. For any cycle $\mc{C}$ with all lines turned on (i.e., when $\sum_{(i,j)\in\mc{C}} \zstar = |\mc{C}|$), we solve the separation problem \eqref{eq: separation}. If the problem is infeasible, we add constraints \eqref{eq: zc_z_1}, \eqref{eq: zc_z_2}, and \eqref{eq: acots_benders}.}

\section{Optimization-Based Bound Tightening}
\label{subsec:obbt}
The OBBT method is a technique in non-convex optimization, which aims to improve the convex relaxation bound by tightening the bounds of certain variables. OBBT is often used to improve bounds and obtain near global-optimal solutions for ACOPF problems \citep{chen2015bound,cengil2022learning}, and it has the benefit of being massively parallelizable \citep{gopinath2020proving}. In our work, we implement OBBT to tighten the bounds of $v_i$, $\theta_{ij}$, $z_{ij}$, and $y_{C}$ variables before solving the relaxations of ACOTS.

To formulate bound tightening optimization models for any variable $x$, we replace the objective of an ACOTS relaxation (e.g., ACOTS-QC) with $\max x$ or $\min x$.
To avoid solving time-consuming MINLPs in OBBT, we linearly relax all integer variables. 
We denote the optimal objectives of the bound tightening maximization and minimization problems $\bar{x}$ and $\underline{x}$. If $x$ is a binary variable (such as $z_{ij}$ and $y_{C}$), we can further tighten their bounds by fixing $x$ to 1 if $\underline{x} > 0$, and to 0 if $\bar{x} < 1$ within the OBBT iteration. 
The OBBT algorithm terminates when the bounds of all variables stop improving, or when the algorithm reaches its time/iteration limit.

{\cblue 
\section{Maximum Spanning Tree Heuristic}\label{sec: span_tree}
Heuristics have been developed to quickly find good feasible solutions for the OTS problem, prioritizing speed over optimality \citep{fuller2012fast, barrows2012computationally, soroush2013accuracies, hinneck2022optimal}. These approaches typically reduce the solution space by fixing a set of candidate lines based on objective sensitivities. However, they often apply heuristics to the simpler, less accurate DCOTS model for large-scale networks or focus on quantifying the accuracy of such decisions. None have explored the problem using more accurate ACOTS relaxations, particularly for large-scale networks.

In this section, we propose a heuristic to accelerate the solution of the {\bf (ACOTS-QC)} formulation in \eqref{eq: qcots}, where we find a maximum spanning tree in the network, and keep all lines in the spanning tree switched
on. Note that a maximum spanning tree is a spanning tree of a weighted graph with the maximum total weight.

Based on empirical observations, most transmission lines remain switched on in the optimal solution. Inspired by this observation, we heuristically identify the lines that are likely the most essential for transmission and that maintain network connectivity, keeping these lines switched on to reduce the number of binary variables. Specifically, we assign the weight $\max\left(\frac{\left|\widehat{S}_{ij}\right |^2}{\cblue \sijbarsq}, \frac{\left |\widehat{S}_{ji} \right |^2}{\cblue \sijbarsq}\right)$ to each line $(i,j)$, where $\widehat{S}_{ij}$ is the locally optimal solution corresponding to the AC power flow variable, $S_{ij}$, in the ACOPF problem (e.g., obtained using Ipopt) based on the original topology with all lines active. Intuitively, lines with higher weights exhibit lower slack. We then find a maximum spanning tree for the network and allow only lines outside the tree to be switched off.

This heuristic restricts the solution space of switching variables, potentially leading to sub-optimal solutions for \eqref{eq: qcots}. However, even sub-optimal line-switching can reduce grid operating costs. Our numerical experiments in Section \ref{sec: experiment_heuristic} demonstrate that the heuristic can quickly find near-optimal solutions for many instances, particularly larger ones.
Additionally, in Section \ref{sec: large_inst}, we show that the heuristic obtains results for several large instances with 500 to 2,312 buses, which, to our knowledge, have not been previously explored for ACOTS with tight convex relaxations. 
}

\section{\cblue Overview of Proposed Algorithm}
\label{ch: overview}
{\cred {\cblue In this section, we provide a summary of our algorithm with all proposed improvements (with lifted cycle constraints added in branch-and-cut fashion).

We start the algorithm by preprocessing variable bounds via OBBT as described in Section \ref{subsec:obbt}. Those bounds are added to (\textbf{ACOTS-QC}), which is then solved via branch-and-bound. Inside the branch-and-bound search, lifted cycle constraints are generated as described in Section \ref{subsec:b_and_c} and added at integral nodes via lazy callback of the solver. The algorithm terminates when the gap between the upper and lower
bounds of the branch-and-bound search is below a small tolerance. Finally, if we opt to use the maximum spanning tree heuristic, it can be incorporated before OBBT.} We summarize the implementation of the algorithm in Figure \ref{fig: flowchart}. 

\bigskip
\begin{figure}[htbp] 
\begin{center}
\vspace*{-0.4cm}
{
\begin{tikzpicture}[node distance=1.75cm]
\tikzstyle{startstop} = [rectangle, rounded corners, minimum width=1.15cm, minimum height=0.7cm,text centered, draw=black, fill = gray!40!white]
\tikzstyle{process} = [rectangle, minimum width=2cm, minimum height=1cm, text centered, text width=1.98cm, draw=black]
\tikzstyle{decision} = [diamond, aspect=1.4, text centered, text width=1.2cm, draw=black]
\tikzstyle{arrow} = [thick,->,>=stealth]

\small
\node (start) [startstop] {START};
\node (obbt) [process, right of=start,xshift = 0.7cm,text width=2.5cm] {Tighten variable bounds via OBBT};
\node (span) [process, below of=obbt,yshift = -0.4cm,xshift = -1.55cm, text width=2.5cm] {\cblue (optional) Maximum Spanning Tree Heuristic};
\node (lp) [process, right of=obbt,xshift = 1.6cm,text width=3cm] {Solve node linear relaxation in branch-and-bound};

\node (trans) [right of=lp, xshift = 2cm] {};
\node (dummytrans) [xshift = 2cm] at (trans.south) {};

\node (stopping) [decision, below of=dummytrans,aspect=1.2,yshift = -1cm, text width=1.47cm] {Stopping criterion?};
\node (stop) [startstop, right of=stopping,xshift = 1.5cm] {STOP};
\node (int) [decision, aspect=1.33,text width=1.4cm] at (lp |- stopping) {Check \\ integrality};

\node (lazy) [process, below of=int,yshift = -1.1cm] {Apply lazy callback};
\node (cuts) [process, right of= lazy, xshift = 2cm, text width=2cm] {Generate cycle-based cuts};
\node (dummy3) [right of=lazy, xshift=0.35cm] {};

\coordinate (mid) at ($(start)!0.35!(obbt)$);

\draw [arrow] (start) -- (obbt);
\draw [arrow] (span) -- (mid);
\draw [arrow] (obbt) -- (lp);
\draw [arrow] (stopping) -- node[anchor=east,xshift = 0.2cm,yshift=0.2cm] {Yes} (stop);
\draw [arrow] (int) -- node[anchor=south,xshift = 0.1cm,yshift=0.01cm, text width = 2.7cm] {$z_{ij},\forall (i,j)\in\mc{A}$ are not all integral} (stopping);

\draw [arrow] (int) -- node[anchor=south,xshift = 1.5cm,yshift=-0.36cm, text width = 2.6cm] {$z_{ij},\forall (i,j)\in\mc{A}$ are integral} (lazy);
\draw [arrow] (stopping) |- node[anchor=north,xshift = 0.4 cm,yshift=-1cm] {No} (lp);
\draw [arrow] (cuts) -| (stopping);
\draw [arrow] (lazy) -- (cuts);
\draw [arrow] (lp) -- (int);
\end{tikzpicture}
}
\caption{\cred Flow chart of the proposed algorithm. }
\label{fig: flowchart}
\end{center}
\end{figure}
}

{\cblue Combining all strengthening techniques in Figure \ref{fig: flowchart} provides the tightest QC-based ACOTS relaxation in the literature. Note that when strengthening the ACOTS relaxations, two factors are crucial: (i) tightness of the MINLP convexification, and (ii) tightness of the corresponding ACOPF relaxation. The later is important because if the binary variables are known, then the ACOTS problem simplifies to an ACOPF problem. In our work, several approaches, such as disjuctive programming-based lifted cycle constraints, are employed to strengthen the MINLP bound. While some of our proposed methods, such as the lifted cycle constraints, can also improve the ACOPF relaxation, as shown in Section \ref{ch: cycle_acopf}, our main focus is not on improving the ACOPF bound, which remains challenging on a large-scale and warrants further exploration. 

}

\section{Numerical Experiments}\label{sec: expriments}
This section presents the numerical efficacy of the proposed ACOTS-QC and ACOPF-QC relaxations with \cycconstrs, and an analysis for ACOTS with different load profiles. Our experiments are conducted on PGLib-OPF v20.07 benchmark library \citep{babaeinejadsarookolaee2019power}. {\cblue For medium-scale instances, we }use a Linux workstation with 3.6GHz Intel Core i9-9900K CPUs and 128GB memory{\cblue ; For large-scale instances in Section \ref{sec: large_inst}, we use Linux workstations with Intel CPUs and 250GB memory}. The programming language is Julia v1.6. We locally solve all non-convex MINLP (ACOTS) {\cblue formulations with both Juniper.jl (v0.7.0) \citep{kroger2018juniper} and Knitro (v.13.0.1) \citep{byrd2006k}. Non-convex NLP (ACOPF) formulations are solved locally using Ipopt (v3.13.4) \citep{wachter2006implementation}.} All relaxation formulations (ACOTS-QC, ACOPF-QC and OBBT iterations) are solved using the Gurobi (v9.0.0) solver. The branch-and-cut framework for cycle constraints is implemented using Gurobi's lazy-constraint callback. { The code for our experiments are available in the INFORMS Journal on Computing GitHub software repository \citep{guo2025acots}. }

\subsection{Relaxations for ACOTS}\label{sec: experiment_acots}
We compare five different types of relaxations for ACOTS:

(1) {\cblue``P"}: The on/off QC relaxation implemented in
PowerModels.jl \citep{coffrin2018power}, which is used as state-of-the-art to benchmark ACOTS relaxations. Formulation within {\cblue``P"} is based on \citep{hijazi2017convex} which uses on/off trigonometric function relaxations and recursive McCormick linearization of trilinear terms, without additional cycle constraints or the OBBT algorithm.  

(2) ``E": Proposed ACOTS-QC relaxation with extreme-point representation for linearizing 
$z_{ij}v_i v_j c_{ij}$ and $z_{ij}v_i v_j s_{ij}$ in \eqref{eq:w_nonconv}. 

(3) ``EC": Tightened ``E" with \cycconstrs. {\cblue The \cycconstrs~are included in (\textbf{ACOTS-QC}) at the start of the algorithm.}

(4) ``ECB": Includes all proposed improvements (extreme-point representation, \cycconstrs, and OBBT). {\cblue The \cycconstrs~are included in (\textbf{ACOTS-QC}) at the start of the algorithm (Compare with ``ECB*" below).}

(5) ``ECB*": The same as ``ECB", except the \cycconstrs~are added via branch-and-cut framework as in Section \ref{subsec:b_and_c}. {\cblue The number of added cuts is bounded at 200.}   

We also provide initial feasible solutions as {\it warm-start} solutions, which are helpful to speedup the convergence for many of the large instances. Those initial feasible solutions are obtained by solving ACOPF-QC relaxation with recursive McCormick linearization for trilinear terms (for {\cblue ``P"}), ACOPF-QC (with extreme-point linearization, for ``E") or ACOPF-QC with \cycconstrs \ (for ``EC"), and those solutions are valid when all lines in the network are switched on. 

We run PGLib instances with up to 300 buses under typical operating conditions (TYP), as well as cases with small angle-difference conditions (SAD) and congested operating conditions (API) {\dblue , which are all created by \citep{babaeinejadsarookolaee2019power}. More specifically, the TYP cases are base cases with no change in the standard PGLib-OPF networks. The SAD cases modify the TYP cases by reducing the voltage angle difference on all branches of the standard networks, while the API cases modify the TYP cases by increasing active power demand throughout the standard networks.}

In Table \ref{tab: ots} we present results for cases that are solved in the 2-hour time limit (within 0.1\% optimality tolerance) for ``E". {\cblue As shown in Table \ref{tab: rel_gap_ots} of the Online Supplement, which provides the relative gap at termination for all PGLib instances up to 300 buses, instances not solved by ``E" are also unsolved by other methods.} The performance measures we use for comparison include optimality gap and runtime. We put ``ns." for the optimality gaps of cases that are not solved to 0.1\% optimality tolerance within the time limit, and ``tl." for the runtime of test cases that hit the time limit. 

The optimality gap is calculated by (UB - LB)/UB$\times$100 where LB is the optimal value from relaxations of ACOTS, and UB is an upper bound for ACOTS. For UB, we take the minimum of \textit{local optimal values} of the following {\cblue four types of methods}:
\BI
\I {\cblue Non-convex ACOTS model \eqref{eq:ACOTS} with the Knitro solver.}
\I Non-convex ACOTS model \eqref{eq:ACOTS} {\cblue with the Juniper solver}. 
\I Non-convex ACOPF model with all lines switched on. 
\I Non-convex ACOPF model with the set of lines switched off, as indicated by the ACOTS-QC solutions. 
\EI

We highlight with boldface the optimality gaps improved after the relaxations are tightened. All comparisons are between two adjacent columns in the table. 
{\cblack We also highlight the reduced runtimes of our branch-and-cut algorithm (in ECB*).}

The runtimes of ``ECB", and ``ECB*" are the runtimes of the ACOTS-QC relaxation problems and do not include the runtimes of OBBT. This is because OBBT time is a constant factor inclusion irrespective of whether the cycle constraints are added to LP-relaxed models ($z_{ij}, y_{C} \in [0,1])$, directly or in a branch-and-cut fashion within the OBBT algorithm. Moreover, these times are not as significant when compared with the ACOTS-QC relaxation problems, as the OBBT's LP-relaxed models, at every iteration, can be solved in parallel. 
{\cblack We also exclude the model building time of {\cblue separation problems} in ``ECB*" within the branch-and-cut algorithm {\cblue (we separately provide the separation problem building time in Table \ref{tab: sep_build} of the Online Supplement)}, as any overhead in such time is an artifact of the mathematical modeling package within Julia. {\cblue We set the feasibility tolerance for { ``EC" and }``ECB*" to $10^{-4}$, instead of Gurobi's default of $10^{-6}$, because with a smaller tolerance the solver may incorrectly reject feasible warm-start solutions for some instances.}
In addition, we set the upper bound on the number of added cuts at 200, as adding too many cuts could slow down the performance. We observe that those added cuts are able to significantly improve the bounds as shown in Table \ref{tab: ots}.}
\begin{table*}[htbp]
\centering
\caption{Optimality gap and runtime of ACOTS relaxations (bold numbers: improved gaps and run times after tightening the relaxation; ``ns.": not solved to optimality tolerance within time limit; ``tl.": hits the time limit).}
\label{tab: ots}
{\footnotesize
\renewcommand{\arraystretch}{1.1}
\begin{tabular}{|rrrrrrrrrrrr|}
\hline
\multicolumn{1}{|l|}{}                        & \multicolumn{1}{l|}{}          & \multicolumn{5}{c|}{Optimality Gap (\%)}                                                                                                                                     & \multicolumn{5}{c|}{Runtime (seconds)}                                                                                                                \\
\multicolumn{1}{|r|}{Test Case}               & \multicolumn{1}{c|}{UB}        & \multicolumn{1}{c|}{{\cblue P}}   & \multicolumn{1}{c|}{E}           & \multicolumn{1}{c|}{EC}          & \multicolumn{1}{c|}{ECB}     & \multicolumn{1}{c|}{ECB*}      & \multicolumn{1}{c|}{{\cblue P}}      & \multicolumn{1}{c|}{E}     & \multicolumn{1}{c|}{EC}    & \multicolumn{1}{c|}{ECB}  & \multicolumn{1}{c|}{ECB*} \\ \hline
\multicolumn{12}{|c|}{Typical {\cblue Operating} Conditions (TYP)}    \\ \hline
\multicolumn{1}{|r|}{case3\_lmbd}             & \multicolumn{1}{r|}{5812.6}    & \multicolumn{1}{r|}{1.3}  & \multicolumn{1}{r|}{\textbf{1.0}}  & \multicolumn{1}{r|}{1.0}    & \multicolumn{1}{r|}{\textbf{0.0}} & \multicolumn{1}{r|}{0.1} & \multicolumn{1}{r|}{0.02}    & \multicolumn{1}{r|}{0.04}    & \multicolumn{1}{r|}{0.19}    & \multicolumn{1}{r|}{0.15}  & 0.39                      \\ \hline
\multicolumn{1}{|r|}{case5\_pjm}              & \multicolumn{1}{r|}{15174.0}   & \multicolumn{1}{r|}{1.1}  & \multicolumn{1}{r|}{1.1}           & \multicolumn{1}{r|}{1.1}           & \multicolumn{1}{r|}{1.1}   & \multicolumn{1}{r|}{1.1}        & \multicolumn{1}{r|}{0.12}    & \multicolumn{1}{r|}{0.04}    & \multicolumn{1}{r|}{0.23}    & \multicolumn{1}{r|}{0.31}  & 0.43                      \\ \hline
\multicolumn{1}{|r|}{case14\_ieee}            & \multicolumn{1}{r|}{2178.1}    & \multicolumn{1}{r|}{0.1}  & \multicolumn{1}{r|}{0.1}           & \multicolumn{1}{r|}{0.1}               & \multicolumn{1}{r|}{0.1}     & \multicolumn{1}{r|}{0.1}     & \multicolumn{1}{r|}{0.44}    & \multicolumn{1}{r|}{0.28}    & \multicolumn{1}{r|}{1.49}  & \multicolumn{1}{r|}{2.50}  & \textbf{1.32}                      \\ \hline
\multicolumn{1}{|r|}{case24\_ieee\_rts}       & \multicolumn{1}{r|}{63352.2}   & \multicolumn{1}{r|}{0.0}  & \multicolumn{1}{r|}{0.0}           & \multicolumn{1}{r|}{0.0}           & \multicolumn{1}{r|}{0.0}      & \multicolumn{1}{r|}{0.0}     & \multicolumn{1}{r|}{4.28}    & \multicolumn{1}{r|}{1.11}    & \multicolumn{1}{r|}{310.56} & \multicolumn{1}{r|}{349.21}   & \textbf{3.37}                    \\ \hline
\multicolumn{1}{|r|}{case30\_as}              & \multicolumn{1}{r|}{803.1}     & \multicolumn{1}{r|}{0.1}  & \multicolumn{1}{r|}{0.1}           & \multicolumn{1}{r|}{0.1}         & \multicolumn{1}{r|}{0.1}    & \multicolumn{1}{r|}{0.1}      & \multicolumn{1}{r|}{6.21}    & \multicolumn{1}{r|}{21.97}   & \multicolumn{1}{r|}{446.96}   & \multicolumn{1}{r|}{630.60} & \textbf{9.97}                    \\ \hline
\multicolumn{1}{|r|}{case30\_ieee}            & \multicolumn{1}{r|}{7579.0}    & \multicolumn{1}{r|}{12.1} & \multicolumn{1}{r|}{\textbf{11.9}} & \multicolumn{1}{r|}{11.9}      & \multicolumn{1}{r|}{\textbf{11.0}}     & \multicolumn{1}{r|}{11.0}     & \multicolumn{1}{r|}{1.21}    & \multicolumn{1}{r|}{1.01}    & \multicolumn{1}{r|}{3.30}  & \multicolumn{1}{r|}{6.56}  & \textbf{4.79}                      \\ \hline
\multicolumn{1}{|r|}{case39\_epri}            & \multicolumn{1}{r|}{137728.7}  & \multicolumn{1}{r|}{0.0}  & \multicolumn{1}{r|}{0.0}           & \multicolumn{1}{r|}{0.0}        & \multicolumn{1}{r|}{0.0}     & \multicolumn{1}{r|}{0.0}     & \multicolumn{1}{r|}{0.65}    & \multicolumn{1}{r|}{0.52}    & \multicolumn{1}{r|}{0.93}    & \multicolumn{1}{r|}{1.17}  & 2.32                      \\ \hline
\multicolumn{1}{|r|}{case57\_ieee}            & \multicolumn{1}{r|}{37559.3}   & \multicolumn{1}{r|}{0.1}  & \multicolumn{1}{r|}{0.1}           & \multicolumn{1}{r|}{0.1}              & \multicolumn{1}{r|}{0.1}     & \multicolumn{1}{r|}{0.1}      & \multicolumn{1}{r|}{34.34}   & \multicolumn{1}{r|}{17.51}   & \multicolumn{1}{r|}{40.27}   & \multicolumn{1}{r|}{77.43} & \textbf{27.96}                    \\ \hline
\multicolumn{1}{|r|}{case73\_ieee\_rts}       & \multicolumn{1}{r|}{189764.1}  & \multicolumn{1}{r|}{0.0}  & \multicolumn{1}{r|}{0.0}           & \multicolumn{1}{r|}{0.0}           & \multicolumn{1}{r|}{0.0}   & \multicolumn{1}{r|}{0.0}        & \multicolumn{1}{r|}{39.58}   & \multicolumn{1}{r|}{33.14}   & \multicolumn{1}{r|}{4876.16}  & \multicolumn{1}{r|}{5136.27} & \textbf{34.48}                   \\ \hline
\multicolumn{1}{|r|}{case89\_pegase}          & \multicolumn{1}{r|}{106622.2}  & \multicolumn{1}{r|}{0.1}  & \multicolumn{1}{r|}{0.1}           & \multicolumn{1}{r|}{\cblue ns.}         & \multicolumn{1}{r|}{ns.}   & \multicolumn{1}{r|}{ns.}         & \multicolumn{1}{r|}{tl.}     & \multicolumn{1}{r|}{tl.}     & \multicolumn{1}{r|}{tl.} & \multicolumn{1}{r|}{tl.}  & tl.                       \\ \hline
\multicolumn{1}{|r|}{case118\_ieee}           & \multicolumn{1}{r|}{96645.9}   & \multicolumn{1}{r|}{0.3}  & \multicolumn{1}{r|}{0.3}           & \multicolumn{1}{r|}{0.3}           & \multicolumn{1}{r|}{0.3}    & \multicolumn{1}{r|}{0.3}      & \multicolumn{1}{r|}{497.58}  & \multicolumn{1}{r|}{415.09}  & \multicolumn{1}{r|}{1622.20}  & \multicolumn{1}{r|}{1778.12} & \textbf{1389.84}                   \\ \hline
\multicolumn{1}{|r|}{case179\_goc}            & \multicolumn{1}{r|}{\cblue 754083.7}  & \multicolumn{1}{r|}{\cblue 0.1}  & \multicolumn{1}{r|}{\cblue 0.1}           & \multicolumn{1}{r|}{\cblue 0.1}           & \multicolumn{1}{r|}{\cblue 0.1}      & \multicolumn{1}{r|}{\cblue 0.1}     & \multicolumn{1}{r|}{1095.93} & \multicolumn{1}{r|}{307.07}  & \multicolumn{1}{r|}{175.25}  & \multicolumn{1}{r|}{334.33 }	& 340.78                    \\ \hline
\multicolumn{1}{|r|}{case200\_activ}          & \multicolumn{1}{r|}{27557.6}   & \multicolumn{1}{r|}{0.0}  & \multicolumn{1}{r|}{0.0}           & \multicolumn{1}{r|}{0.0}           & \multicolumn{1}{r|}{0.0}    & \multicolumn{1}{r|}{ns.}      & \multicolumn{1}{r|}{1636.15} & \multicolumn{1}{r|}{2837.87} & \multicolumn{1}{r|}{4398.64}  & \multicolumn{1}{r|}{3014.30}	& tl.                  \\ \hline
\multicolumn{12}{|c|}{Small Angle Difference Conditions (SAD)}                                                                                                                                                                                                                                                                                                                                                        \\ \hline
\multicolumn{1}{|r|}{case3\_lmbd\_sad}        & \multicolumn{1}{r|}{5959.3}    & \multicolumn{1}{r|}{3.0}  & \multicolumn{1}{r|}{\textbf{1.4}}  & \multicolumn{1}{r|}{\textbf{1.3}} & \multicolumn{1}{r|}{\textbf{0.1}}  & \multicolumn{1}{r|}{0.1} & \multicolumn{1}{r|}{0.02}    & \multicolumn{1}{r|}{0.02}    & \multicolumn{1}{r|}{0.07} & \multicolumn{1}{r|}{0.15}  & 0.38                      \\ \hline
\multicolumn{1}{|r|}{case5\_pjm\_sad}         & \multicolumn{1}{r|}{26108.8}   & \multicolumn{1}{r|}{1.4}  & \multicolumn{1}{r|}{\textbf{0.6}}  & \multicolumn{1}{r|}{0.6}         & \multicolumn{1}{r|}{\textbf{0.2}} & \multicolumn{1}{r|}{0.2} & \multicolumn{1}{r|}{0.04}    & \multicolumn{1}{r|}{0.05}    & \multicolumn{1}{r|}{0.15}    & \multicolumn{1}{r|}{0.20}  & 0.41                      \\ \hline
\multicolumn{1}{|r|}{case14\_ieee\_sad}       & \multicolumn{1}{r|}{2727.5}    & \multicolumn{1}{r|}{20.1} & \multicolumn{1}{r|}{\textbf{18.3}} & \multicolumn{1}{r|}{\textbf{12.1}} & \multicolumn{1}{r|}{\textbf{0.7}} & \multicolumn{1}{r|}{0.8} & \multicolumn{1}{r|}{0.41}    & \multicolumn{1}{r|}{0.57}    & \multicolumn{1}{r|}{2.55}  & \multicolumn{1}{r|}{3.23}   & \textbf{1.15}                      \\ \hline
\multicolumn{1}{|r|}{case24\_ieee\_rts\_sad}  & \multicolumn{1}{r|}{75794.0}   & \multicolumn{1}{r|}{5.3}  & \multicolumn{1}{r|}{\textbf{2.4}}  & \multicolumn{1}{r|}{\textbf{2.1}}  & \multicolumn{1}{r|}{\textbf{0.7}}  & \multicolumn{1}{r|}{0.8} & \multicolumn{1}{r|}{32.92}   & \multicolumn{1}{r|}{14.37}   & \multicolumn{1}{r|}{84.41}   & \multicolumn{1}{r|}{104.09}  & \textbf{16.97}                    \\ \hline
\multicolumn{1}{|r|}{case30\_as\_sad}         & \multicolumn{1}{r|}{893.9}     & \multicolumn{1}{r|}{4.5}  & \multicolumn{1}{r|}{\textbf{1.9}}  & \multicolumn{1}{r|}{1.9}    & \multicolumn{1}{r|}{\textbf{1.1}}     & \multicolumn{1}{r|}{1.2}     & \multicolumn{1}{r|}{18.56}   & \multicolumn{1}{r|}{11.68}   & \multicolumn{1}{r|}{45.37}   & \multicolumn{1}{r|}{65.27} & \textbf{14.28}                     \\ \hline
\multicolumn{1}{|r|}{case30\_ieee\_sad}       & \multicolumn{1}{r|}{8188.6}    & \multicolumn{1}{r|}{8.8}  & \multicolumn{1}{r|}{\textbf{8.7}}  & \multicolumn{1}{r|}{8.7}  & \multicolumn{1}{r|}{\textbf{0.1}} & \multicolumn{1}{r|}{0.2} & \multicolumn{1}{r|}{1.50}    & \multicolumn{1}{r|}{2.82}    & \multicolumn{1}{r|}{5.62}    & \multicolumn{1}{r|}{5.89}  & \textbf{2.77}                      \\ \hline
\multicolumn{1}{|r|}{case39\_epri\_sad}       & \multicolumn{1}{r|}{147472.8}  & \multicolumn{1}{r|}{0.1}  & \multicolumn{1}{r|}{0.1}           & \multicolumn{1}{r|}{0.1}        & \multicolumn{1}{r|}{0.1}     & \multicolumn{1}{r|}{0.1}      & \multicolumn{1}{r|}{11.84}   & \multicolumn{1}{r|}{15.59}   & \multicolumn{1}{r|}{14.39}  & \multicolumn{1}{r|}{16.40}  & \textbf{9.68}                     \\ \hline
\multicolumn{1}{|r|}{case57\_ieee\_sad}       & \multicolumn{1}{r|}{38597.8}   & \multicolumn{1}{r|}{0.2}  & \multicolumn{1}{r|}{0.2}           & \multicolumn{1}{r|}{\textbf{0.1}}  & \multicolumn{1}{r|}{0.1}     & \multicolumn{1}{r|}{0.1}     & \multicolumn{1}{r|}{24.21}   & \multicolumn{1}{r|}{44.55}   & \multicolumn{1}{r|}{148.33}   & \multicolumn{1}{r|}{189.35} & \textbf{82.79}                    \\ \hline
\multicolumn{1}{|r|}{case89\_pegase\_sad}     & \multicolumn{1}{r|}{\cblue 106633.6}  & \multicolumn{1}{r|}{ns.}    & \multicolumn{1}{r|}{\cblue \textbf{0.1}}  & \multicolumn{1}{r|}{\cblue ns.}         & \multicolumn{1}{r|}{ns.}    & \multicolumn{1}{r|}{ns.}         & \multicolumn{1}{r|}{tl.}     & \multicolumn{1}{r|}{tl.}     & \multicolumn{1}{r|}{tl.}   & \multicolumn{1}{r|}{tl.}  & tl.                       \\ \hline
\multicolumn{1}{|r|}{case118\_ieee\_sad}      & \multicolumn{1}{r|}{97572.5}   & \multicolumn{1}{r|}{0.9}  & \multicolumn{1}{r|}{0.9}           & \multicolumn{1}{r|}{0.9}        & \multicolumn{1}{r|}{0.9}      & \multicolumn{1}{r|}{0.9}     & \multicolumn{1}{r|}{4581.28} & \multicolumn{1}{r|}{3453.41} & \multicolumn{1}{r|}{6818.73}   & \multicolumn{1}{r|}{tl.}  & \textbf{4520.33}                       \\ \hline
\multicolumn{1}{|r|}{case179\_goc\_sad}       & \multicolumn{1}{r|}{755293.1}  & \multicolumn{1}{r|}{ns.}    & \multicolumn{1}{r|}{\textbf{0.1}}  & \multicolumn{1}{r|}{0.1}           & \multicolumn{1}{r|}{0.1}      & \multicolumn{1}{r|}{0.1}    & \multicolumn{1}{r|}{tl.}     & \multicolumn{1}{r|}{tl.}     & \multicolumn{1}{r|}{tl.}     & \multicolumn{1}{r|}{tl.}  & tl.                       \\ \hline
\multicolumn{1}{|r|}{case200\_activ\_sad}     & \multicolumn{1}{r|}{27557.6}   & \multicolumn{1}{r|}{0.0}  & \multicolumn{1}{r|}{0.0}           & \multicolumn{1}{r|}{0.0}           & \multicolumn{1}{r|}{0.0}     & \multicolumn{1}{r|}{ns.}     & \multicolumn{1}{r|}{3564.09} & \multicolumn{1}{r|}{tl.}     & \multicolumn{1}{r|}{1671.87}  & \multicolumn{1}{r|}{tl.}		& tl.                       \\ \hline
\multicolumn{12}{|c|}{Congested Operating Conditions (API)}\\ \hline
\multicolumn{1}{|r|}{case3\_lmbd\_api}        & \multicolumn{1}{r|}{10636.0}   & \multicolumn{1}{r|}{3.8}  & \multicolumn{1}{r|}{\textbf{0.4}}  & \multicolumn{1}{r|}{0.4}      & \multicolumn{1}{r|}{\textbf{0.0}}    & \multicolumn{1}{r|}{0.0}       & \multicolumn{1}{r|}{0.02}    & \multicolumn{1}{r|}{0.03}    & \multicolumn{1}{r|}{0.13}  & \multicolumn{1}{r|}{0.09}  & 0.33                      \\ \hline
\multicolumn{1}{|r|}{case5\_pjm\_api}         & \multicolumn{1}{r|}{75190.3}   & \multicolumn{1}{r|}{2.6}  & \multicolumn{1}{r|}{2.6}           & \multicolumn{1}{r|}{2.6}    & \multicolumn{1}{r|}{\textbf{0.3}} & \multicolumn{1}{r|}{0.3} & \multicolumn{1}{r|}{0.11}    & \multicolumn{1}{r|}{0.07}    & \multicolumn{1}{r|}{0.19}    & \multicolumn{1}{r|}{0.30}  & 0.55                      \\ \hline
\multicolumn{1}{|r|}{case14\_ieee\_api}       & \multicolumn{1}{r|}{5999.4}    & \multicolumn{1}{r|}{5.1}  & \multicolumn{1}{r|}{5.1}           & \multicolumn{1}{r|}{5.1}      & \multicolumn{1}{r|}{\textbf{0.8}} & \multicolumn{1}{r|}{0.9} & \multicolumn{1}{r|}{0.34}    & \multicolumn{1}{r|}{0.27}    & \multicolumn{1}{r|}{1.24}     & \multicolumn{1}{r|}{0.82}  & 0.84                      \\ \hline
\multicolumn{1}{|r|}{case24\_ieee\_rts\_api}  & \multicolumn{1}{r|}{119743.1}  & \multicolumn{1}{r|}{5.2}  & \multicolumn{1}{r|}{\textbf{3.4}}  & \multicolumn{1}{r|}{3.4}      & \multicolumn{1}{r|}{\textbf{1.2}}     & \multicolumn{1}{r|}{1.2}     & \multicolumn{1}{r|}{7.30}    & \multicolumn{1}{r|}{4.16}    & \multicolumn{1}{r|}{82.80}    & \multicolumn{1}{r|}{31.66}  & \textbf{6.50}                     \\ \hline
\multicolumn{1}{|r|}{case30\_as\_api}         & \multicolumn{1}{r|}{\cblue 2925.1}    & \multicolumn{1}{r|}{\cblue 5.4}  & \multicolumn{1}{r|}{\cblue 5.4}           & \multicolumn{1}{r|}{\cblue 5.4}      & \multicolumn{1}{r|}{\cblue \textbf{5.1}} & \multicolumn{1}{r|}{\cblue 5.1}	& \multicolumn{1}{r|}{3.65}    & \multicolumn{1}{r|}{3.44}    & \multicolumn{1}{r|}{37.99}   & \multicolumn{1}{r|}{20.32}   & \textbf{6.00}                     \\ \hline
\multicolumn{1}{|r|}{case30\_ieee\_api}       & \multicolumn{1}{r|}{17936.5}   & \multicolumn{1}{r|}{4.9}  & \multicolumn{1}{r|}{4.9}           & \multicolumn{1}{r|}{4.9}   & \multicolumn{1}{r|}{\textbf{0.3}} & \multicolumn{1}{r|}{0.4}	& \multicolumn{1}{r|}{1.25}    & \multicolumn{1}{r|}{0.86}    & \multicolumn{1}{r|}{2.22}   & \multicolumn{1}{r|}{4.57}  & \textbf{2.22}                      \\ \hline
\multicolumn{1}{|r|}{case39\_epri\_api}       & \multicolumn{1}{r|}{246723.0}  & \multicolumn{1}{r|}{0.5}  & \multicolumn{1}{r|}{0.5}           & \multicolumn{1}{r|}{0.5}    & \multicolumn{1}{r|}{\textbf{0.4}}     & \multicolumn{1}{r|}{0.4}     & \multicolumn{1}{r|}{1.42}    & \multicolumn{1}{r|}{0.75}    & \multicolumn{1}{r|}{1.86}  & \multicolumn{1}{r|}{4.84} & \textbf{3.77}                      \\ \hline
\multicolumn{1}{|r|}{case57\_ieee\_api}       & \multicolumn{1}{r|}{49271.9}   & \multicolumn{1}{r|}{0.0}  & \multicolumn{1}{r|}{0.1}           & \multicolumn{1}{r|}{0.1}    & \multicolumn{1}{r|}{\textbf{0.0}}   & \multicolumn{1}{r|}{0.0}       & \multicolumn{1}{r|}{36.50}   & \multicolumn{1}{r|}{12.63}   & \multicolumn{1}{r|}{44.12}   & \multicolumn{1}{r|}{51.50} & \textbf{27.34}                     \\ \hline
\multicolumn{1}{|r|}{case73\_ieee\_rts\_api}  & \multicolumn{1}{r|}{385277.3}  & \multicolumn{1}{r|}{4.3}  & \multicolumn{1}{r|}{\textbf{2.4}}  & \multicolumn{1}{r|}{\cblue ns.}   & \multicolumn{1}{r|}{\textbf{1.2}}   & \multicolumn{1}{r|}{1.2}       & \multicolumn{1}{r|}{128.08}  & \multicolumn{1}{r|}{3869.93} & \multicolumn{1}{r|}{tl.} & \multicolumn{1}{r|}{3307.26}	& \textbf{328.03}                   \\ \hline
\multicolumn{1}{|r|}{case89\_pegase\_api}     & \multicolumn{1}{r|}{100325.3}  & \multicolumn{1}{r|}{ns.}    & \multicolumn{1}{r|}{\textbf{0.2}}  & \multicolumn{1}{r|}{\cblue ns.}      & \multicolumn{1}{r|}{ns.}     & \multicolumn{1}{r|}{ns.}       & \multicolumn{1}{r|}{tl.}     & \multicolumn{1}{r|}{tl.}     & \multicolumn{1}{r|}{tl.}     & \multicolumn{1}{r|}{tl.}  & tl.                       \\ \hline
\multicolumn{1}{|r|}{case118\_ieee\_api}      & \multicolumn{1}{r|}{181535.8}  & \multicolumn{1}{r|}{6.5}  & \multicolumn{1}{r|}{\textbf{6.2}}  & \multicolumn{1}{r|}{6.2}    & \multicolumn{1}{r|}{\textbf{6.1}}      & \multicolumn{1}{r|}{6.1}    & \multicolumn{1}{r|}{tl.}     & \multicolumn{1}{r|}{tl.}     & \multicolumn{1}{r|}{tl.}   & \multicolumn{1}{r|}{tl.}	& tl.                       \\ \hline
\multicolumn{1}{|r|}{case162\_ieee\_dtc\_api} & \multicolumn{1}{r|}{116923.8}  & \multicolumn{1}{r|}{1.0}  & \multicolumn{1}{r|}{1.0}           & \multicolumn{1}{r|}{\cblue ns.}       & \multicolumn{1}{r|}{\cblue ns.} & \multicolumn{1}{r|}{ns.}	& \multicolumn{1}{r|}{tl.}     & \multicolumn{1}{r|}{tl.}     & \multicolumn{1}{r|}{tl.}   & \multicolumn{1}{r|}{tl.}   & tl.                       \\ \hline
\multicolumn{1}{|r|}{case179\_goc\_api}       & \multicolumn{1}{r|}{\cblue 1932020.5} & \multicolumn{1}{r|}{6.3}  & \multicolumn{1}{r|}{\textbf{5.9}}  & \multicolumn{1}{r|}{\textbf{5.8}}    & \multicolumn{1}{r|}{\textbf{0.6}}      & \multicolumn{1}{r|}{0.6}     & \multicolumn{1}{r|}{tl.}     & \multicolumn{1}{r|}{881.61}  & \multicolumn{1}{r|}{tl.}  & \multicolumn{1}{r|}{343.73} & 1000.46                    \\ \hline
\multicolumn{1}{|r|}{case200\_activ\_api}     & \multicolumn{1}{r|}{35701.3}   & \multicolumn{1}{r|}{0.0}  & \multicolumn{1}{r|}{0.0}           & \multicolumn{1}{r|}{0.0}         & \multicolumn{1}{r|}{0.0}      & \multicolumn{1}{r|}{0.0}    & \multicolumn{1}{r|}{1993.90} & \multicolumn{1}{r|}{2764.12} & \multicolumn{1}{r|}{tl.}   & \multicolumn{1}{r|}{3029.17} & \textbf{855.60}                   \\ \hline
\multicolumn{1}{|r|}{case240\_pserc\_api}     & \multicolumn{1}{r|}{\cblue 4638308.7} & \multicolumn{1}{r|}{ns.}    & \multicolumn{1}{r|}{\textbf{0.6}}  & \multicolumn{1}{r|}{0.6}             & \multicolumn{1}{r|}{\cblue \textbf{0.5}}      & \multicolumn{1}{r|}{0.6}     & \multicolumn{1}{r|}{tl.}     & \multicolumn{1}{r|}{tl.}     & \multicolumn{1}{r|}{tl.}     & \multicolumn{1}{r|}{tl.} & tl.                       \\ \hline
\multicolumn{1}{|r|}{case300\_ieee\_api}      & \multicolumn{1}{r|}{684985.5}  & \multicolumn{1}{r|}{0.8}  & \multicolumn{1}{r|}{0.8}           & \multicolumn{1}{r|}{0.8}   & \multicolumn{1}{r|}{\textbf{0.7}}     & \multicolumn{1}{r|}{0.8}     & \multicolumn{1}{r|}{tl.}     & \multicolumn{1}{r|}{tl.}     & \multicolumn{1}{r|}{tl.}     & \multicolumn{1}{r|}{tl.}  & tl.                       \\ \hline
\end{tabular}}
\end{table*}

In Table \ref{tab: ots}, compared with {\cblue``P"}, our tightened ``E" reduces the optimality gap for many benchmark instances, especially for the SAD and API 
ones. For example, it yields 3.4\% gap improvement 
for case3\_lmbd\_api and 2.9\% improvement for case24\_ieee\_rts\_sad. It also solves several instances to optimality that {\cblue``P"} is not able to solve within the time limit. The benefit of the \cycconstrs \ (``EC") is most apparent for ``SAD" , with case14\_ieee\_sad closing 6.2\%.

Combining the extreme point formulation, OBBT algorithm, and the \cycconstrs, \textit{we obtain the tightest {\cred QC-based} ACOTS relaxation in the literature}, as highlighted in the optimality gap columns of ``ECB" and ``ECB*" (see Table \ref{tab: ots}). Note that {\cblue ``ECB*" may not be as tight as ``ECB," since ``ECB" includes all cycle constraints, while ``ECB*" adds up to 200 cycle constraints via cutting planes. However, when ``ECB*" is not as tight, the difference is only 0.1\%, indicating that ``ECB*" remains relatively strong. Due to the efficient implementation of the branch-and-cut framework, ``ECB*" }reduces the solution time significantly in many instances. It is clear from the table that the OBBT algorithm in conjunction with all the proposed enhancements in this paper can provide significant improvements in closing the gap for several benchmark cases. For example, in case14\_ieee\_sad, ``ECB" closes as much as 17.6\% of the gap when compared with ``E", and proves global optimality for case3\_lmbd\_api. {\cblue Compared with ``E", ``ECB" and the faster ``ECB*" reduce the gap of nine instances to below 1.0\%. In fact, with ``ECB" and ``ECB*", }we close the optimality gaps to {\cblue less} than 1.0\% for $\approx$75\% of all instances; these improvements are also significant when compared with state-of-the-art implementation in {\cblue ``P''} and the results in \citep{bestuzheva2020convex}. {\cred Note that if the optimality gap equals zero, then the corresponding ACOTS relaxation provides a globally optimal solution to the non-convex ACOTS problem.} 

{\cblue Note that there are a few instances that are not solved due to no feasible solution found within the time limit. In particular, for the instances with 89 buses, we do not provide warm-start solutions when cycle constraints are included, as these instances have a large number of cycles, resulting in large warm-start files that cause a stack overflow error in the solver. In addition, the warm-start solution is incorrectly rejected by the solver when solving case200\_activ with ``ECB*". Without warm start, these instances struggle to find a feasible solution.}

{\cblue 
We also check the tightness of the root node relaxation. To find the values of the root node relaxation, we set the Gurobi ``NodeLimit" parameter to 0 and obtain the optimal objective bound. In Table \ref{tab: root_relax} of the Online Supplement, we present the optimality gaps at the root node for the ACOTS relaxations, which equals 100$\times$(UB-root node relaxation value)/UB. In contrast to the results in Table \ref{tab: ots}, the improvement of ``E" is relatively small at the root node, indicating that ``E" closes more gap during branch-and-bound. ``EC" provides tight root relaxations for several SAD instances. The ``ECB" relaxation provides tighter root node relaxations for many instances, in particular for SAD and API ones, which possibly explains the reduced final optimality gaps in Table \ref{tab: ots}. Note that unlike ``ECB", the bound tightening optimization models for ``ECB*" are not strengthened by cycle constraints, leading to potentially weaker bounds after OBBT and weaker root node relaxations. In fact, ``ECB*" shows weaker root relaxations than ``ECB" in several instances, suggesting that the tighter root relaxation of ``ECB" is a result of both bound tightening and cycle constraints.

In addition, we include the results for the ``PB" and ``EB" relaxations (i.e., tightened ``P" and ``E" respectively with OBBT) in Table \ref{tab: eb_appendix} of the Online Supplement. We highlight with boldface-italic the optimality gaps that can be further improved by ``EB" (for ``PB") and by ``ECB" (for ``EB"). The difference between ``PB" and ``EB" shows that our extreme-point representation is useful for tightening many instances, even after OBBT is implemented. Also, adding lifted cycle constraints (``ECB") tightens ``EB" for many instances.

To summarize, our basic relaxation with extreme-point representation (i.e., ``E") provides improvements over many of the instances compared with the state-of-the-art benchmark, with comparable computational time. Both the \cycconstrs~and OBBT provide enhancements to the basic relaxation. Combining all the improvements provides the tightest QC-based ACOTS relaxation in the literature, with the runtime comparable to the basic relaxation when implemented in the branch-and-cut format.}

{\cblue Note that for obtaining the UB, Juniper fails to find feasible solutions for many instances within the time limit (2 hours), while Knitro {\dblue found feasible solutions for} all instances. On the other hand, for the instances that Juniper can solve, it sometimes provides better solutions. We provide a comparison of the results from the two solvers in Table \ref{tab: junper_knitro} of the Online Supplement.}

Although not shown in the result table, it is worthy to mention that tightening the ACOTS relaxations does lead to different line switching decisions. {\cblue For example, in ``case24\_ieee\_rts\_api", 13.9\% of the lines have different switching decisions between ``E" and ``ECB". If we fix the on/off statuses of the lines based on the solution of ``E" and then solve the problem with ``ECB", the cost increases by 4.5\%. Similarly, in ``case30\_as\_api", 7.3\% of the lines have different switching decisions, leading to a 79.0\% cost increase when using the ``E" solution. }Therefore, by tightening the relaxation, we make better decisions and obtain better approximations of the true cost after line switching. 

{\cblue 
\subsection{Results with the Maximum Spanning Tree Heuristic}\label{sec: experiment_heuristic}
Table \ref{tab: span_tree} compares results of our ACOTS relaxations with and without the maximum spanning tree heuristic proposed in Section \ref{sec: span_tree}. We include PGLib instances with up to 300 buses, which are solved with the same setups as in Section \ref{sec: experiment_acots}. In addition, when using the heuristic, we solve one ``ECB" instance (``case30\_as\_api") and the three 200-bus ``ECB*" instances with Gurobi's presolve disabled and with one thread. This is necessary to avoid the solver either incorrectly terminating with a locally optimal solution or incorrectly reporting infeasibility.

\begin{table}[htbp]
\centering
    \caption{\cblue Comparing the objective difference and saved runtime of ACOTS relaxations with and without the spanning tree heuristic (``s.": solved with heuristic and unsolved without; ``ns.": unsolved with either method; ``ws." solved without the heuristic and unsolved with it).}\label{tab: span_tree}
{\cblue \footnotesize
    \begin{tabular}{|r|>{\raggedleft\arraybackslash}p{0.8cm}|>{\raggedleft\arraybackslash}p{0.8cm}|r|r|r|r|r|r|}
\hline
\multicolumn{1}{|l|}{}          & \multicolumn{4}{c|}{Objective Difference (\%)}  & \multicolumn{4}{c|}{Runtime Saved (seconds)} \\
\multicolumn{1}{|r|}{Test Case}  & \multicolumn{1}{c|}{E}           & \multicolumn{1}{c|}{EC}          & \multicolumn{1}{c|}{ECB}     & \multicolumn{1}{c|}{ECB*}    & \multicolumn{1}{c|}{E}     & \multicolumn{1}{c|}{EC}    & \multicolumn{1}{c|}{ECB}  & \multicolumn{1}{c|}{ECB*} \\ \hline
\multicolumn{9}{|c|}{Typical Operating Conditions (TYP)} \\
    \hline
    case3\_lmbd & 0.0   & 0.0   & 0.0   & 0.0   & 0.0   & 0.1  & 0.0   & 0.2 \\
    \hline
    case5\_pjm & 0.0   & 0.0   & 0.1   & 0.1   & 0.0   & 0.1   & 0.1   & -0.9 \\
    \hline
    case14\_ieee & 0.0   & 0.0   & 0.0   & 0.0   & 0.0   & 0.1   & -0.2  & 2.4 \\
    \hline
    case24\_ieee\_rts & 0.0   & 0.0   & 0.0   & 0.0   & -0.4  & 304.2 & 339.9 & 2.7 \\
    \hline
    case30\_as & 0.0   & 0.0   & 0.0   & 0.0   & 18.9  & 433.1 & 593.7 & -15.7 \\
    \hline
    case30\_ieee & 0.0   & 0.0   & 20.2  & 20.2  & 0.4   & 1.2   & -2.1  & 6.7 \\
    \hline
    case39\_epri & 0.0   & 0.0   & 0.0   & 0.1   & 0.1   & 0.2   & -1.2  & 3.3 \\
    \hline
    case57\_ieee & 0.0   & 0.0   & 0.0   & 0.0   & 14.4  & 31.6  & 56.9  & 17.2 \\
    \hline
    case73\_ieee\_rts & 0.0   & 0.0   & 0.0   & 0.0   & 13.6  & 4796.2 & 5037.2 & -1.9 \\
    \hline
    case89\_pegase & 0.0   & ns.   & ns.   & ns.   & 4667.2 & -     &  -     & - \\
    \hline
    case118\_ieee & 0.0   & 0.1   & 0.2   & 0.1   & 364.1 & 1251.9 & -334.1 & 1200.9 \\
    \hline
    case162\_ieee\_dtc & \textbf{s.} & ns.   & ns.   & ns.   & -     & -     & -     & - \\
    \hline
    case179\_goc & 0.0   & 0.0   & 0.0   & 0.0   & 296.8 & 129.0 & 2.2   & -220.0 \\
    \hline
    case200\_activ & 0.0   & 0.0   & 0.0   & \textbf{s.} & 2654.5 & 4307.8 & -991.4 & - \\
    \hline
    case240\_pserc & 0.0   & 0.0   & ns.   & 0.2   & 4653.3 & 1778.3 & -     & 0.0 \\
    \hline
    case300\_ieee & ns.   & ns.   & ns.   & ns.   & -     & -     & -     & - \\
    \hline
    \multicolumn{9}{|c|}{Small Angle Difference Conditions (SAD)} \\
    \hline
    case3\_lmbd\_sad & 0.0   & 0.0   & 0.0   & 0.0   & 0.0   & 0.0  & 0.0   & 0.3 \\
    \hline
    case5\_pjm\_sad & 0.0   & 0.0   & 0.1   & 0.1   & 0.0   & 0.0   & 0.0   & -0.1 \\
    \hline
    case14\_ieee\_sad & 0.4   & 0.4   & 0.5   & 0.6   & 0.3   & 0.5   & 1.7   & 0.0 \\
    \hline
    case24\_ieee\_rts\_sad & 1.1   & 1.3   & 1.9   & 2.0   & 13.4  & 78.8  & 94.8  & 14.6 \\
    \hline
    case30\_as\_sad & 0.0   & 0.0   & 1.1   & 1.1   & 7.9   & 27.7  & 30.1  & 6.3 \\
    \hline
    case30\_ieee\_sad & 3.5   & 3.6   & 0.1   & 0.1   & 1.9   & 1.4   & -1.4  & -0.3 \\
    \hline
    case39\_epri\_sad & 0.4   & 0.4   & 0.5   & 0.5   & 14.8  & 12.9  & 13.4  & 7.6 \\
    \hline
    case57\_ieee\_sad & 0.0   & 0.0   & 0.1   & 0.1   & 39.9  & 121.0 & 135.7 & 67.3 \\
    \hline
    case73\_ieee\_rts\_sad & \textbf{s.} & \textbf{s.} & \textbf{s.} & \textbf{s.} & -     & -     & -     & - \\
    \hline
    case89\_pegase\_sad & 0.0   & ns.   & ns.   & ns.   & 0.0   & -     &  -     & - \\
    \hline
    case118\_ieee\_sad & 1.4   & 1.4   & ns.   & 1.7   & 6416.4 & -312.4 & -     & 1261.6 \\
    \hline
    case162\_ieee\_dtc\_sad & ns.   & ns.   & ns.   & ns.   & -     & -     & -     & - \\
    \hline
    case179\_goc\_sad & 0.1   & 0.1   & ws.   & 0.2   & 6885.0 & 5242.6 & -     & 5368.4 \\
    \hline
    case200\_activ\_sad & 0.0   & 0.0   & 0.0   & \textbf{s.} & 6980.2 & 6292.4 & 4481.7 & - \\
    \hline
    case240\_pserc\_sad & ns.   & ns.   & ns.   & ns.   & -     & -     & -     & - \\
    \hline
    case300\_ieee\_sad & ns.   & ns.   & ns.   & ns.   & -     & -     & -     & - \\
    \hline
    \multicolumn{9}{|c|}{Congested Operating Conditions (API)} \\
    \hline
    case3\_lmbd\_api & 1.2   & 2.0   & 5.6   & 5.6   & 0.0   & 0.1 & -0.1  & -1.1 \\
    \hline
    case5\_pjm\_api & 0.0   & 0.0   & 1.8   & 1.8   & 0.1   & 0.1   & 0.1   & 1.3 \\
    \hline
    case14\_ieee\_api & 0.0   & 0.0   & 0.3   & 0.4   & 0.1   & 0.3   & -0.8  & -0.4 \\
    \hline
    case24\_ieee\_rts\_api & 3.5   & 3.7   & 12.1  & 12.0  & 3.1   & 77.7  & 22.0  & 15.1 \\
    \hline
    case30\_as\_api & 0.0   & 0.0   & 29.3  & 29.0  & 1.8   & 27.2  & -27.0 & 2.6 \\
    \hline
    case30\_ieee\_api & 0.0   & 0.0   & 0.2   & 0.1   & 0.3   & 0.5   & -0.8  & 0.0 \\
    \hline
    case39\_epri\_api & 0.0   & 0.0   & 1.5   & 1.5   & 0.3   & 1.0   & 2.5   & 7.1 \\
    \hline
    case57\_ieee\_api & 0.0   & 0.0   & 0.0   & 0.0   & 9.9   & 36.1  & 32.5  & 18.7 \\
    \hline
    case73\_ieee\_rts\_api & 1.2   & \textbf{s.} & 7.1   & 7.2   & 3818.8 & -     & 3068.8 & 294.7 \\
    \hline
    case89\_pegase\_api & 0.0   & ns.   & ns.   & ns.   & 4415.6 & -     & -      & - \\
    \hline
    case118\_ieee\_api & 0.1   & 0.1   & ws.   & \textbf{s.} & 7113.7 & 6853.6 & -     & - \\
    \hline
    case162\_ieee\_dtc\_api & 0.0   & \textbf{s.} & ns.   & \textbf{s.} & 5543.4 & -     & -     & - \\
    \hline
    case179\_goc\_api & 0.0   & 0.1   & 0.0   & 0.0   & 729.2 & 7074.7 & -492.9 & 925.7 \\
    \hline
    case200\_activ\_api & 0.0   & 0.0   & 0.0   & 0.0   & 2404.3 & 6913.0 & 2016.3 & 647.8 \\
    \hline
    case240\_pserc\_api & 0.0   & 0.0   & \textbf{s.} & ws.   & 5559.7 & 0.0   & -     & - \\
    \hline
    case300\_ieee\_api & 0.0   & 0.0   & 0.1   & 0.2   & 6316.0 & 5198.2 & 0.0   & 5655.5 \\
    \hline
    \end{tabular}}
\end{table}

The ``Objective Difference" values are calculated by 100 $\times$(LB$_2$ - LB$_1$)/LB$_1$, where LB$_1$ and LB$_2$ are respectively the optimal value with and without the heuristic. We use ``s." for cases solved with the heuristic but not without it, ``ws." for cases solved without the heuristic but not with it, and ``ns." for cases not solved by either method. ``Runtime Saved" shows the saved runtime by using the heuristic. 

With the heuristic, we can solve many instances significantly faster, and solve 12 more instances within the 0.1\% tolerance. The heuristic can lead to higher optimal values for some instances. The objective difference is generally smaller for larger cases, suggesting that their accuracy is less affected by the heuristic. This is probably because larger networks tend to have more flexibility even after the heuristic fixes some lines to be switched on. There are three instances that can be solved without the heuristic, while the heuristic leads to relative gaps larger than 0.1\% (and below 0.5\%) at termination.

We also tested with assigning the same weight to all lines, but this approach does not lead to as much improvement as shown in Table \ref{tab: span_tree}. }

{\cblue 
\subsection{Performance on Large-Scale ACOTS Instances}\label{sec: large_inst}
In this section, we present experimental results for large-scale PGLib instances with 500 to 2,312 buses. Note that to the best of our knowledge, the largest ACOTS-QC instances investigated in the literature are up to 300 buses \citep{hijazi2017convex, bestuzheva2020convex}. However, with our proposed relaxations and heuristic, we are able to obtain results for some of these larger instances. 

In Table \ref{tab: large_inst_paper} we present results for the upper bound and the relaxations of ACOTS-QC. Here we include instances for which a feasible solution is found by either ``P", ``E", or ``EC", and provide the results for all instances in Table \ref{tab: large_inst} of the Online Supplement. The ``UB" and relaxation results are obtained with a similar method as in Section \ref{tab: ots}, with the following adjustment: solver time limits are set to 3 hours, and solver feasibility tolerances for the relaxations are set to $10^{-3}$ to reduce the likelihood of incorrectly rejecting warm-start solutions. Note that when using Gurobi's default feasibility tolerance of $10^{-6}$, none of the instances is solved. We include results for ``P", ``E", and ``EC" relaxations. None of the ``ECB" and ``ECB*" relaxation instances is solved within the time limit. The performance measures we use include the objective value and runtime (or the relative gap at termination for cases hit the time limit). We use ``nf." as the objective value if no feasible solution is found within the time limit.

{\cblue \footnotesize
    \begin{longtable}[htbp]{|r|r|r|r|r|>{\raggedleft\arraybackslash}p{1.4cm}|>{\raggedleft\arraybackslash}p{1.4cm}|>{\raggedleft\arraybackslash}p{1.4cm}|}
    \caption{\cblue \small \bf Objective value, runtime, and relative gap of ACOTS relaxations for instances with 500 to 2,312 buses (``nf.": no feasible solution within time limit).}\label{tab: large_inst_paper}\\
    \hline
          &       & \multicolumn{3}{c|}{Objective Value} & \multicolumn{3}{c|}{Runtime (seconds) (Relative Gap)} \\
    Test Case & \multicolumn{1}{c|}{UB} & \multicolumn{1}{c|}{P} & \multicolumn{1}{c|}{E} & \multicolumn{1}{c|}{EC} & \multicolumn{1}{c|}{P} & \multicolumn{1}{c|}{E} & \multicolumn{1}{c|}{EC} \\
    \hline
    case500\_goc\_sad & 487397.2 & nf.   & 455966.2 & nf.   & -     & (0.5\%) & - \\
    \hline
    case1354\_pegase\_sad & 1258848.0 & nf.   & 1239689.5 & 1239701.2 & -     & (0.1\%) & (0.2\%) \\
    \hline
    case2000\_goc\_sad & 992879.8 & nf.   & nf.   & 979501.4 & -     & -     & (65.1\%) \\
    \hhline{|=|=|=|=|=|=|=|=|}
    case500\_goc\_api & 675967.9 & nf.   & 668613.9 & 668617.9 & -     & 2332.84 & 9289.02 \\
    \hline
    case588\_sdet\_api & 391951.0 & 389971.1 & nf.   & nf.   & (0.2\%) & -     & - \\
    \hline
    case1354\_pegase\_api & 1498271.0 & nf.   & nf.   & 1490268.6 & -     & -     & (0.1\%) \\
    \hline
    case2000\_goc\_api & 1468630.3 & nf.   & nf.   & 1438342.5 & -     & -     & (86.2\%) \\
    \hline
    case2312\_goc\_api & 571446.1 & nf.   & 496817.5 & nf.   & -     & (26.3\%) & - \\
    \hhline{|=|=|=|=|=|=|=|=|}
    case500\_goc & 454946.0 & nf.   & 453852.0 & nf.   & -     & 8897.06 & - \\
    \hline
    case588\_sdet & 310016.0 & 307339.6 & nf.   & nf.   & (0.1\%) & -     & - \\
    \hline
    case793\_goc & 259097.8 & 256776.8 & nf.   & nf.   & (0.0\%) & -     & - \\
    \hline
    case1354\_pegase & 1258844.0 & nf.   & 1239314.7 & 1239327.2 & -     & 1770.48 & (0.1\%) \\
    \hline
    case1951\_rte & 2085530.3 & nf.   & 2083344.3 & nf.   & -     & (0.1\%) & - \\
    \hline
    case2000\_goc & 973432.5 & nf.   & 970466.8 & nf.   & -     & (57.8\%) & - \\
    \hline
    \end{longtable}}

The state-of-the-art ``P" relaxation solves 3 cases with at most 793 buses. In comparison, the ``E" relaxation solves more cases to a below 0.5\% relative gap, including some cases with over 1000 buses such as ``case1354\_pegase\_sad" and ``case1951\_rte". In particular, the objective value of ``case1951\_rte" is within 0.1\% of UB, suggesting it is very close to the global optimal solution. The ``EC" relaxation solves ``case1354\_pegase\_api" within a 0.1\% relative gap, while this case is not solved by ``E".

In Table \ref{tab: large_span_paper}, we also present results with the maximum spanning tree heuristic. Here we include instances that obtains a feasible solution from any one of the ACOTS-QC relaxations, and present the results for all instances in Table \ref{tab: large_span} of the Online Supplement. This method enables us to heuristically solve more large instances either to optimality or to within a 0.5\% relative gap. Interestingly, ``EC" can sometimes solve cases that ``E" cannot solve, such as ``case1354\_pegase\_api" and ``case2000\_goc\_api". This is likely due to the cycle constraints reduce the search space. The ``ECB" and ``ECB*" relaxations solve fewer instances compared with ``E" and ``EC", which is possibly because more warm-start solutions being incorrectly rejected after the bounds are tightened. 

{\footnotesize \cblue
    \begin{longtable}[htbp]{|r|r|r|r|r|r|r|r|r|}
    \caption{\cblue \small \bf With the spanning tree heuristic: the objective value, runtime, and relative gap of ACOTS relaxations for instances with 500 to 2,312 buses (``nf.": no feasible solution within time limit).}\label{tab: large_span_paper}\\
    \hline
          & \multicolumn{4}{c|}{Objective Value} & \multicolumn{4}{c|}{Runtime (seconds) (Relative Gap)} \\
    Test Case & \multicolumn{1}{c|}{E} & \multicolumn{1}{c|}{EC} & \multicolumn{1}{c|}{ECB} & \multicolumn{1}{c|}{ECB*} & \multicolumn{1}{c|}{E} & \multicolumn{1}{c|}{EC} & \multicolumn{1}{c|}{ECB} & \multicolumn{1}{c|}{ECB*} \\
    \hline
    case500\_goc\_sad & 455966.2 & nf.   & nf.   & nf.   & 376.13 & -     & -     & - \\
    \hline
    case588\_sdet\_sad & 309538.7 & nf.   & nf.   & nf.   & (0.6\%) & -     & -     & - \\
    \hline
    case793\_goc\_sad & 267942.0 & 268916.6 & 274171.5 & 273799.2 & (1.4\%) & (1.8\%) & (3.3\%) & (3.2\%) \\
    \hline
    case1354\_pegase\_sad & 1239689.5 & 1239701.2 & nf.   & nf.   & (0.1\%) & (0.1\%) & -     & - \\
    \hline
    case2000\_goc\_sad & nf.   & 979501.4 & nf.   & nf.   & -     & (65.1\%) & -     & - \\
    \hhline{|=|=|=|=|=|=|=|=|=|}
    case500\_goc\_api & 668613.9 & 668617.9 & nf.   & nf.   & 8774.69 & 753.22 & -     & - \\
    \hline
    case588\_sdet\_api & 389283.1 & 389286.7 & 390014.4 & 390166.5 & 3974.67 & (0.0\%) & (0.1\%) & (0.0\%) \\
    \hline
    case793\_goc\_api & nf.   & 276754.6 & 281581.3 & 283829.4 & -     & (0.3\%) & (1.6\%) & (2.4\%) \\
    \hline
    case1354\_pegase\_api & 1502166.8 & 1490268.6 & nf.   & nf.   & (0.8\%) & (0.0\%) & -     & - \\
    \hline
    case2000\_goc\_api & nf.   & 1438342.5 & nf.   &   nf.    & -     & 286.95 & -     &  - \\
    \hline
    case2312\_goc\_api & 496817.5 & nf.   &  nf.     & nf.   & (26.3\%) & -     &  -     & - \\
    \hhline{|=|=|=|=|=|=|=|=|=|}
    case500\_goc & 453852.0 & nf.   & nf.   & nf.   & 3512.71 & -     & -     & - \\
    \hline
    case588\_sdet & 307165.3 & 307164.4 & 307746.1 & 307766.5 & (0.1\%) & (0.1\%) & (0.1\%) & (0.1\%) \\
    \hline
    case793\_goc & 256772.0 & 256772.4 & 256852.8 & 256908.6 & (0.0\%) & (0.0\%) & (0.0\%) & (0.0\%) \\
    \hline
    case1354\_pegase & 1239314.7 & 1239327.2 & nf.   & nf.   & (0.1\%) & (0.1\%) & -     & - \\
    \hline
    case2000\_goc & 970466.8 & nf.   & nf.   &  nf.     & (57.8\%) & -     & -     & - \\
    \hline
    \end{longtable}}

In sum, the ``E" and ``EC" relaxations with the max spanning tree heuristic are very useful in obtaining results for larger PGLib cases. Given the critical role of warm-start solution in performance, it is promising that more large cases can be handled once the solver can effectively adopt feasible warm-start solutions.

}

\subsection{Analysis for varying load profiles}
We uniformly increase the loading condition, starting from the nominal value, of case30\_ieee instance, and observe the number of lines that are switched off. As shown in Figure \ref{fig: sens_lines}, the number of off lines first decreases, and then increases. This is because when the load is at the lower levels, some lines are redundant and are switched off to save costs. However, when the load is very high and the network is congested, lines are switched off to avoid congestion.
As suggested by \cite{fisher2008optimal}, it would be beneficial to solve the ACOTS problem frequently to obtain optimal line switching decisions for different load profiles. 
\begin{figure}[htbp]
    \centering
\includegraphics[scale=0.55]{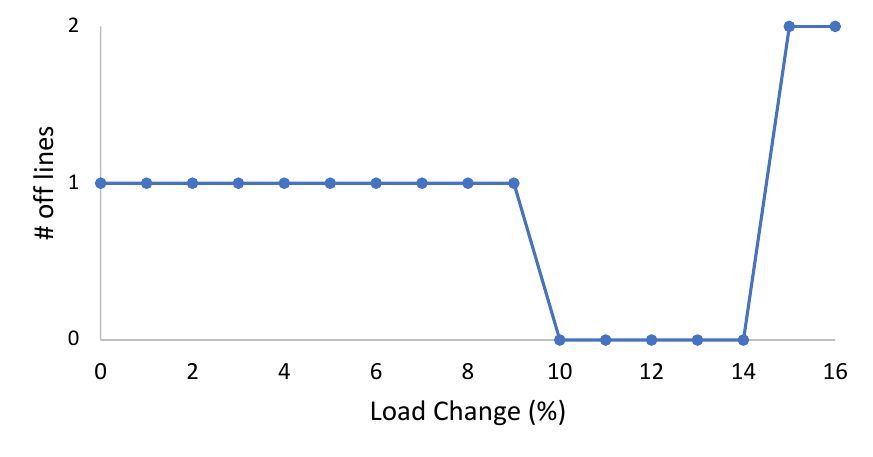} 
    \caption{Number of lines switched off with different load levels.}
    \label{fig: sens_lines}
\end{figure}
\subsection{Lifted cycle constraints for ACOPF-QC}\label{ch: cycle_acopf}
We also add the linearized \cycconstrs~(including the novel ones we derived) to the ACOPF-QC relaxation, This happens to be the special case of the ACOTS-QC with all the lines of the network {\cblue switched} on. These cycle constraints are linearized using the strong extreme-point representation, which is also new. We experiment with ``TYP", ``SAD" and ``API" cases with up to 2869 buses (102 cases in total), and observe improvements of optimality gaps in several instances. We report cases with greater than 0.03\% improvement in objectives in Table \ref{tab: opf}{\cblue, and include the results for all tested cases in Table \ref{tab: all_opf} of the Online Supplement.} The results show that, for ACOPF-QC the \cycconstrs \ are more useful in tightening ``SAD" and ``API" instances, and for smaller-size test cases.

\begin{table}[htbp]
\centering
\caption{Comparing optimality gaps (\%) for ``E" and ``EC" relaxations for ACOPF-QC. {\cblue Including all cases with improvement greater than 0.03\%}.}
\label{tab: opf}
\renewcommand{\arraystretch}{1.15}
\scalebox{1.02}{\small
\begin{tabular}{|r|r|r|}
\hline
Test Case             & \multicolumn{1}{c|}{E} & \multicolumn{1}{c|}{EC} \\ \hhline{|=|=|=|}
case3\_lmbd\_sad        & 1.38                     & 1.31                      \\ \hline
case14\_ieee\_sad       & 19.16                    & 13.10                     \\ \hline
case24\_ieee\_rts\_sad  & 2.74                     & 2.20                      \\ \hline
case57\_ieee\_sad       & 0.32                     & 0.25                      \\ \hline
case73\_ieee\_rts\_sad  & 2.37                     & 1.80                      \\ \hline
case240\_pserc\_sad     & 4.34                     & 4.24                      \\ \hline
case2383wp\_k\_sad      & 1.91                     & 1.88                      \\ \hhline{|=|=|=|}
case3\_lmbd\_api        & 4.53                     & 3.85                      \\ \hline
case24\_ieee\_rts\_api  & 11.02                    & 10.88                     \\ \hline
case73\_ieee\_rts\_api  & 9.52                     & 9.31                      \\ \hline
case179\_goc\_api       & 5.86                     & 5.75                      \\ \hline
\end{tabular}}
\end{table}

\FloatBarrier
\section{Conclusion}
In this paper, we strengthen the on/off QC relaxation of the ACOTS model by the extreme-point representation technique, several valid inequalities added via branch-and-cut, and the OBBT algorithm. {\cblue We also speed up the solution with a maximum spanning tree-based heuristic.} Experiments on PGLib instances show that the strengthened ACOTS-QC formulation significantly improves lower bounds in several instances, especially for small angle-difference instances and congested instances. {\cblue We also experiment with large-scale instances that are unexplored in the literature, demonstrating the usefulness of our relaxations and heuristic.} Our proposed \cycconstrs~improve bounds of ACOTS-QC as well as ACOPF-QC relaxations{\cblue , and are helpful for solving more large-scale instances. We believe that our methods and findings make an important contribution towards solving ACOTS problems of more practical scale.} 

Considering large-scale grids, operated in a close-to-real-time fashion, ACOTS is still a very hard problem to solve with optimality guarantees. As the line switching decisions are sensitive to load changes, it would be helpful to develop stochastic programming models that provide more robust solutions. {\cblue It would also be interesting to improve the maximum spanning tree heuristic, further reducing binary switching decisions while maintaining high accuracy. }To address scaling issues, it would be useful to (i) balance the trade-off between run time and tighter formulations{\cblue ; }(ii) develop faster decomposition-based distributed algorithms{\cblue ; and (iii) study more scalable ACOPF relaxations.}  

\ACKNOWLEDGMENT{%
The authors gratefully acknowledge funding from the U.S. Department of Energy ``Laboratory Directed Research and Development (LDRD)" program under the project ``20230091ER: Learning to Accelerate Global Solutions for Non-convex Optimization". The authors also thank Clemson University for the allocation of compute time on the Palmetto Cluster. Additionally, we thank the anonymous reviewers for their critical feedback, which improved the presentation of this paper.}

\bibliographystyle{informs2014} 
\bibliography{references}

\newpage
\begin{APPENDICES}
 \section{Additional Experiment Results for ACOTS Instances Up to 300 Buses}\label{ch:add_experiments_300}
{\cblue \footnotesize
    \begin{longtable}{|r|>{\raggedleft\arraybackslash}p{0.6cm}|>{\raggedleft\arraybackslash}p{0.6cm}|>{\raggedleft\arraybackslash}p{0.6cm}|r|r|}
    \caption{\cblue \small \bf Relative gap (\%) of ACOTS relaxations at termination (``nf.": no feasible solution within time limit).}\label{tab: rel_gap_ots}\\
    \hline
    Test Case & \multicolumn{1}{c|}{P} & \multicolumn{1}{c|}{E} & \multicolumn{1}{c|}{EC} & \multicolumn{1}{c|}{ECB} & \multicolumn{1}{c|}{ECB*} \\
    \hline
    case3\_lmbd & 0.0   & 0.0   & 0.0   & 0.0   & 0.0 \\
    \hline
    case5\_pjm & 0.0   & 0.0   & 0.0   & 0.0   & 0.0 \\
    \hline
    case14\_ieee & 0.0   & 0.0   & 0.0   & 0.0   & 0.0 \\
    \hline
    case24\_ieee\_rts & 0.0   & 0.0   & 0.0   & 0.0   & 0.0 \\
    \hline
    case30\_as & 0.0   & 0.0   & 0.0   & 0.0   & 0.0 \\
    \hline
    case30\_ieee & 0.0   & 0.0   & 0.0   & 0.0   & 0.0 \\
    \hline
    case39\_epri & 0.0   & 0.0   & 0.0   & 0.0   & 0.0 \\
    \hline
    case57\_ieee & 0.0   & 0.0   & 0.0   & 0.0   & 0.0 \\
    \hline
    case73\_ieee\_rts & 0.0   & 0.0   & 0.0   & 0.0   & 0.0 \\
    \hline
    case89\_pegase & 0.1   & 0.1   & nf.   & nf.   & nf. \\
    \hline
    case118\_ieee & 0.0   & 0.0   & 0.0   & 0.0   & 0.0 \\
    \hline
    case162\_ieee\_dtc & 0.1   & 0.1   & 0.2   & 0.6   & 0.2 \\
    \hline
    case179\_goc & 0.0   & 0.0   & 0.0   & 0.0   & 0.0 \\
    \hline
    case200\_activ & 0.0   & 0.0   & 0.0   & 0.0   & nf. \\
    \hline
    case240\_pserc & 0.1   & 0.0   & 0.0   & 0.2   & 0.0 \\
    \hline
    case300\_ieee & nf.   & 2.6   & 3.0   & 1.8   & nf. \\
    \hhline{|=|=|=|=|=|=|}
    case3\_lmbd\_sad & 0.0   & 0.0   & 0.0   & 0.0   & 0.0 \\
    \hline
    case5\_pjm\_sad & 0.0   & 0.0   & 0.0   & 0.0   & 0.0 \\
    \hline
    case14\_ieee\_sad & 0.0   & 0.0   & 0.0   & 0.0   & 0.0 \\
    \hline
    case24\_ieee\_rts\_sad & 0.0   & 0.0   & 0.0   & 0.0   & 0.0 \\
    \hline
    case30\_as\_sad & 0.0   & 0.0   & 0.0   & 0.0   & 0.0 \\
    \hline
    case30\_ieee\_sad & 0.0   & 0.0   & 0.0   & 0.0   & 0.0 \\
    \hline
    case39\_epri\_sad & 0.0   & 0.0   & 0.0   & 0.0   & 0.0 \\
    \hline
    case57\_ieee\_sad & 0.0   & 0.0   & 0.0   & 0.0   & 0.0 \\
    \hline
    case73\_ieee\_rts\_sad & 0.3   & 0.1   & 0.2   & 0.5   & 0.2 \\
    \hline
    case89\_pegase\_sad & 0.0   & 0.1   & nf.   & nf.   & nf. \\
    \hline
    case118\_ieee\_sad & 0.0   & 0.0   & 0.0   & 0.1   & 0.0 \\
    \hline
    case162\_ieee\_dtc\_sad & 0.2   & 0.3   & 0.4   & 0.6   & 0.4 \\
    \hline
    case179\_goc\_sad & nf.   & 0.0   & 0.0   & 0.1   & 0.1 \\
    \hline
    case200\_activ\_sad & 0.0   & 0.0   & 0.0   & 0.0   & nf. \\
    \hline
    case240\_pserc\_sad & nf.   & 0.3   & 0.8   & 0.4   & nf. \\
    \hline
    case300\_ieee\_sad & nf.   & 1.9   & 2.8   & 2.8   & 2.0 \\
    \hhline{|=|=|=|=|=|=|}
    case3\_lmbd\_api & 0.0   & 0.0   & 0.0   & 0.0   & 0.0 \\
    \hline
    case5\_pjm\_api & 0.0   & 0.0   & 0.0   & 0.0   & 0.0 \\
    \hline
    case14\_ieee\_api & 0.0   & 0.0   & 0.0   & 0.0   & 0.0 \\
    \hline
    case24\_ieee\_rts\_api & 0.0   & 0.0   & 0.0   & 0.0   & 0.0 \\
    \hline
    case30\_as\_api & 0.0   & 0.0   & 0.0   & 0.0   & 0.0 \\
    \hline
    case30\_ieee\_api & 0.0   & 0.0   & 0.0   & 0.0   & 0.0 \\
    \hline
    case39\_epri\_api & 0.0   & 0.0   & 0.0   & 0.0   & 0.0 \\
    \hline
    case57\_ieee\_api & 0.0   & 0.0   & 0.0   & 0.0   & 0.0 \\
    \hline
    case73\_ieee\_rts\_api & 0.0   & 0.0   & 2.7   & 0.0   & 0.0 \\
    \hline
    case89\_pegase\_api & nf.   & 0.1   & nf.   & nf.   & nf. \\
    \hline
    case118\_ieee\_api & 0.0   & 0.0   & 0.1   & 0.0   & 0.1 \\
    \hline
    case162\_ieee\_dtc\_api & 0.1   & 0.1   & 0.2   & 0.9   & 0.2 \\
    \hline
    case179\_goc\_api & 0.0   & 0.0   & 0.0   & 0.0   & 0.0 \\
    \hline
    case200\_activ\_api & 0.0   & 0.0   & 0.0   & 0.0   & 0.0 \\
    \hline
    case240\_pserc\_api & nf.   & 0.0   & 0.0   & 0.1   & 0.1 \\
    \hline
    case300\_ieee\_api & 0.0   & 0.0   & 0.0   & 0.0   & 0.0 \\
    \hline
    \end{longtable}}
{\cblue \footnotesize
    \begin{longtable}{|r|r|}
    \caption{\cblue \small \bf Separation problem building time (seconds) of ECB*.}\label{tab: sep_build}\\
    \hline
    Test Case & \multicolumn{1}{c|}{Time} \\
    \hline
    case3\_lmbd & 1.56 \\
    \hline
    case5\_pjm & 2.24 \\
    \hline
    case14\_ieee & 5.56 \\
    \hline
    case24\_ieee\_rts & 6.42 \\
    \hline
    case30\_as & 5.59 \\
    \hline
    case30\_ieee & 7.62 \\
    \hline
    case39\_epri & 8.08 \\
    \hline
    case57\_ieee & 4.66 \\
    \hline
    case73\_ieee\_rts & 6.53 \\
    \hline
    case89\_pegase & 4.73 \\
    \hline
    case118\_ieee & 8.30 \\
    \hline
    case179\_goc & 9.09 \\
    \hline
    case200\_activ & 19.22 \\
    \hhline{|=|=|}
    case3\_lmbd\_sad & 1.54 \\
    \hline
    case5\_pjm\_sad & 1.77 \\
    \hline
    case14\_ieee\_sad & 3.10 \\
    \hline
    case24\_ieee\_rts\_sad & 2.54 \\
    \hline
    case30\_as\_sad & 2.58 \\
    \hline
    case30\_ieee\_sad & 3.33 \\
    \hline
    case39\_epri\_sad & 2.65 \\
    \hline
    case57\_ieee\_sad & 3.25 \\
    \hline
    case89\_pegase\_sad & 5.30 \\
    \hline
    case118\_ieee\_sad & 6.47 \\
    \hline
    case179\_goc\_sad & 3.70 \\
    \hline
    case200\_activ\_sad & 7.04 \\
    \hhline{|=|=|}
    case3\_lmbd\_api & 0.23 \\
    \hline
    case5\_pjm\_api & 3.25 \\
    \hline
    case14\_ieee\_api & 3.18 \\
    \hline
    case24\_ieee\_rts\_api & 17.61 \\
    \hline
    case30\_as\_api & 3.59 \\
    \hline
    case30\_ieee\_api & 3.57 \\
    \hline
    case39\_epri\_api & 6.88 \\
    \hline
    case57\_ieee\_api & 7.42 \\
    \hline
    case73\_ieee\_rts\_api & 8.83 \\
    \hline
    case89\_pegase\_api & 6.21 \\
    \hline
    case118\_ieee\_api & 5.81 \\
    \hline
    case162\_ieee\_dtc\_api & 7.03 \\
    \hline
    case179\_goc\_api & 13.66 \\
    \hline
    case200\_activ\_api & 20.32 \\
    \hline
    case240\_pserc\_api & 5.96 \\
    \hline
    case300\_ieee\_api & 7.03 \\
    \hline
    \end{longtable}}

 
  {\cblue \footnotesize
    \begin{longtable}[htbp]{|r|r|r|r|r|r|}      
      \caption{\cblue \small \bf Optimality gap (\%) of ACOTS relaxations at the root node (bold numbers: improved gaps after tightening the relaxation with ``E", ``EC" and ``ECB"; bold-italic numbers: ``ECB*" optimality gaps can be improved with ``ECB"). }\label{tab: root_relax}\\
    \hline
    Instance & \multicolumn{1}{c|}{P} & \multicolumn{1}{c|}{E} & \multicolumn{1}{c|}{EC} & \multicolumn{1}{c|}{ECB} & \multicolumn{1}{c|}{ECB*} \\
    \hline
    case3\_lmbd & 3.9   & 3.9   & \textbf{3.8} & \textbf{2.8} & \textit{\textbf{3.1}} \\
    \hline
    case5\_pjm & 2.4   & 2.4   & 2.4   & 2.4   & 2.4 \\
    \hline
    case14\_ieee & 5.9   & 5.9   & 5.9   & \textbf{5.8} & 5.8 \\
    \hline
    case24\_ieee\_rts & 4.1   & 4.1   & 4.1   & 4.1   & 4.1 \\
    \hline
    case30\_as & 4.5   & 4.5   & 4.5   & 4.5   & 4.5 \\
    \hline
    case30\_ieee & 25.8  & 25.8  & 25.8  & \textbf{25.1} & 25.1 \\
    \hline
    case39\_epri & 1.2   & 1.2   & 1.2   & 1.2   & 1.2 \\
    \hline
    case57\_ieee & 7.6   & 7.6   & 7.6   & 7.6   & 7.6 \\
    \hline
    case73\_ieee\_rts & 4.0   & 4.0   & 4.0   & 4.0   & 4.0 \\
    \hline
    case89\_pegase & 1.8   & 1.8   & 1.8   & 1.8      & 1.8 \\
    \hline
    case118\_ieee & 4.1   & 4.1   & 4.2   & 4.2   & 4.1 \\
    \hline
    case179\_goc & 0.5   & 0.5   & 0.5   & 0.5   & 0.5 \\
    \hline
    case200\_activ & 0.3   & 0.3   & 0.3   & 0.3   & 0.3 \\\hline
    \hhline{|=|=|=|=|=|=|}
    case3\_lmbd\_sad & 6.1   & \textbf{5.8} & \textbf{5.1} & \textbf{1.6} & 1.6 \\
    \hline
    case5\_pjm\_sad & 4.5   & 4.5   & \textbf{2.1} & \textbf{1.1} & \textit{\textbf{3.0}} \\
    \hline
    case14\_ieee\_sad & 24.8  & 24.8  & 24.8  & \textbf{9.5} & \textit{\textbf{12.5}} \\
    \hline
    case24\_ieee\_rts\_sad & 10.8  & 10.8  & \textbf{10.4} & \textbf{7.7} & \textit{\textbf{10.4}} \\
    \hline
    case30\_as\_sad & 12.1  & 12.1  & \textbf{11.6} & \textbf{9.7} & \textit{\textbf{11.9}} \\
    \hline
    case30\_ieee\_sad & 21.2  & 21.2  & 21.2  & \textbf{5.0} & \textit{\textbf{5.9}} \\
    \hline
    case39\_epri\_sad & 1.5   & 1.5   & 1.5   & \textbf{1.4} & 1.4 \\
    \hline
    case57\_ieee\_sad & 2.5   & 2.5   & 2.5   & \textbf{2.4} & \textit{\textbf{2.5}} \\
    \hline
    case89\_pegase\_sad & 1.7   & 1.7   & 1.7   & 1.7      & 1.7 \\
    \hline
    case118\_ieee\_sad & 5.0   & 5.0   & 5.0   & 5.0   & 5.0 \\
    \hline
    case179\_goc\_sad & 0.5   & 0.5   & 0.5   & 0.5   & 0.5 \\
    \hline
    case200\_activ\_sad & 0.3   & 0.3   & 0.3   & 0.3   & 0.3 \\
    \hhline{|=|=|=|=|=|=|}
    case3\_lmbd\_api & 6.6   & \textbf{6.5} & \textbf{6.1} & \textbf{0.4} & \textit{\textbf{7.0}} \\
    \hline
    case5\_pjm\_api & 2.9   & 2.9   & 2.9   & \textbf{1.2} & 1.2 \\
    \hline
    case14\_ieee\_api & 22.1  & 22.1  & 22.1  & \textbf{11.3} & \textit{\textbf{16.2}} \\
    \hline
    case24\_ieee\_rts\_api & 17.2  & 17.2  & 17.2  & \textbf{14.0} & \textit{\textbf{16.3}} \\
    \hline
    case30\_as\_api & 10.6  & 10.6  & 10.6  & \textbf{10.2} & 10.2 \\
    \hline
    case30\_ieee\_api & 15.1  & 15.1  & 15.1  & \textbf{5.7} & \textit{\textbf{7.5}} \\
    \hline
    case39\_epri\_api & 2.2   & 2.2   & 2.2   & \textbf{2.1} & 2.1 \\
    \hline
    case57\_ieee\_api & 3.9   & 3.9   & 3.9   & 3.9   & 3.9 \\
    \hline
    case73\_ieee\_rts\_api & 11.9  & 11.9  & 11.9  & \textbf{11.3} & 11.3 \\
    \hline
    case89\_pegase\_api & 2.4   & 2.5   & 2.5   & 2.5      & 2.5 \\
    \hline
    case118\_ieee\_api & 13.7  & \textbf{13.3} & 13.3  &\textbf{13.2}       & 13.2 \\
    \hline
    case162\_ieee\_dtc\_api & 6.7   & 6.7   & 6.8   &  \textbf{6.7}     & 6.7 \\
    \hline
    case179\_goc\_api & 11.7  & 11.7  & 11.7  &  \textbf{1.7}     & 1.7 \\
    \hline
    case200\_activ\_api & 2.0   & 2.0   & 2.0   &2.0       & 2.0 \\
    \hline
    case240\_pserc\_api & 2.0   & 2.1   & 2.1   & 2.1    & 2.1 \\
    \hline
    case300\_ieee\_api & 4.6   & 4.6   & 4.6   & 4.6      & 4.4 \\
    \hline
    \end{longtable}}

{\scriptsize \cblue
\begin{longtable}{| r | >{\raggedleft\arraybackslash}p{1.3cm} | >{\raggedleft\arraybackslash}p{1.3cm} | r | r |}        
\caption{\cblue \small \bf Optimality gap and runtime of ACOTS relaxations ``PB" and ``EB" (bold-italic numbers: optimality gaps can be improved after tightening the relaxation; ``tl.": hits the time limit).}\label{tab: eb_appendix}\\
\hline
 & \multicolumn{2}{l|}{Optimality Gap (\%)} & \multicolumn{2}{l|}{Runtime (seconds)} \\
\multicolumn{1}{|r|}{Test Case} & \multicolumn{1}{c|}{PB} & \multicolumn{1}{c|}{EB} & \multicolumn{1}{c|}{PB} & \multicolumn{1}{c|}{EB}\\ \hline
case3\_lmbd & \textbf{\emph{0.2}} & \textbf{\emph{0.1}} & 0.03 & 0.03 \\ \hline
case5\_pjm & \textbf{\emph{1.2}} & 1.1 & 0.04 & 0.05 \\ \hline
case14\_ieee & 0.1 & 0.1 & 0.28 & 0.42 \\ \hline
case24\_ieee\_rts & 0.0 & 0.0 & 0.74 & 1.70 \\ \hline
case30\_as& 0.1 & 0.1 & 1.63 & 11.93 \\ \hline
case30\_ieee & \textbf{\emph{11.7}} & 11.0 & 1.20 & 2.36 \\ \hline
case39\_epri & \textbf{\emph{0.1}} & 0.0 & 0.36 & 0.62 \\ \hline
case57\_ieee & 0.1 & 0.1 & 12.14 & 24.97 \\ \hline
case73\_ieee\_rts & 0.0 & 0.0 & 14.94 & 90.88 \\ \hline
case89\_pegase & 0.1 & 0.1 & tl. & tl. \\ \hline
case118\_ieee & 0.3 & 0.3 & 263.30 & 679.13 \\ \hline
case179\_goc & 0.1 & 0.1 & 88.37 & 180.36 \\ \hline
case200\_activ & 0.0 & 0.0 & 790.71 & 4200.74 \\\hhline{|=|=|=|=|=|}
case3\_lmbd\_sad & \textbf{\emph{0.3}} & \textbf{\emph{0.2}} & 0.01 & 0.02 \\ \hline
case5\_pjm\_sad & \textbf{\emph{0.4}} & \textbf{\emph{0.3}} & 0.03 & 0.04 \\ \hline
case14\_ieee\_sad & \textbf{\emph{2.3}} & \textbf{\emph{0.8}} & 0.25 & 0.51 \\ \hline
case24\_ieee\_rts\_sad& \textbf{\emph{2.3}} & \textbf{\emph{0.9}} & 3.36 & 11.33 \\ \hline
case30\_as\_sad & \textbf{\emph{1.9}} & 1.1 & 2.94 & 16.89 \\ \hline
case30\_ieee\_sad& \textbf{\emph{0.5}} & \textbf{\emph{0.2}} & 0.92 & 2.02 \\ \hline
case39\_epri\_sad & 0.1 & 0.1 & 6.29 & 12.47 \\ \hline
case57\_ieee\_sad & \textbf{\emph{0.2}} & 0.1 & 13.12 & 42.12 \\ \hline
case89\_pegase\_sad & 0.1 & 0.1 & tl. & tl. \\ \hline
case118\_ieee\_sad & 0.9 & 0.9 & 2297.54 & tl. \\ \hline
case179\_goc\_sad & 0.1 & 0.1 & tl. & tl. \\ \hline
case200\_activ\_sad & 0.0 & 0.0 & 479.80 & 1910.26 \\\hhline{|=|=|=|=|=|}
case3\_lmbd\_api & 0.0 & 0.0 & 0.01 & 0.01 \\ \hline
case5\_pjm\_api & \textbf{\emph{0.7}} & \textbf{\emph{0.4}} & 0.04 & 0.05 \\ \hline
case14\_ieee\_api & \textbf{\emph{3.4}} & \textbf{\emph{1.1}} & 0.14 & 0.18 \\ \hline
case24\_ieee\_rts\_api & \textbf{\emph{4.0}} & 1.2 & 1.13 & 3.96 \\ \hline
case30\_as\_api & 5.3 & 5.3 & 1.22 & 8.01 \\ \hline
case30\_ieee\_api & \textbf{\emph{0.8}} & \textbf{\emph{0.4}} & 0.61 & 1.04 \\ \hline
case39\_epri\_api & \textbf{\emph{0.5}} & 0.4 & 0.65 & 1.82 \\ \hline
case57\_ieee\_api & \textbf{\emph{0.1}} & 0.0 & 13.73 & 27.64 \\ \hline
case73\_ieee\_rts\_api & \textbf{\emph{2.9}} & 1.2 & 64.52 & 107.39 \\ \hline
case89\_pegase\_api & 0.2 & 0.2 & tl. & tl. \\ \hline
case118\_ieee\_api & \textbf{\emph{6.3}} & 6.1 & 1056.73 & 5482.16 \\ \hline
case162\_ieee\_dtc\_api & 1.0 & \textbf{\emph{1.0}} & tl. & tl. \\ \hline
case179\_goc\_api & 0.6 & \textbf{\emph{0.6}} & 129.72 & 218.81 \\ \hline
case200\_activ\_api & 0.0 & 0.0 & 769.25 & 884.19 \\ \hline
case240\_pserc\_api & 0.6 & 0.6 & tl. & tl. \\ \hline
case300\_ieee\_api & 0.7 & 0.7 & tl. & tl. \\\hline
\end{longtable}}

  {\cblue \footnotesize 
    \begin{longtable}[htbp]{|r|r|r|} 
      \caption{\cblue \small \bf Locally optimal objective values of the non-convex ACOTS model \eqref{eq:ACOTS} obtained by Juniper and Knitro (bold numbers: better objective values; ``nf.": no feasible solution within time limit).}\label{tab: junper_knitro}\\
    \hline
    Test Case & Juniper & \multicolumn{1}{c|}{Knitro} \\
    \hline
    case3\_lmbd & 5812.6 & 5812.6 \\
    \hline
    case5\_pjm & 15174.0 & 15174.0 \\
    \hline
    case14\_ieee & 2178.1 & 2178.1 \\
    \hline
    case24\_ieee\_rts & 63352.2 & 63352.2 \\
    \hline
    case30\_as & 803.1 & 803.1 \\
    \hline
    case30\_ieee & 7579.0 & 7579.0 \\
    \hline
    case39\_epri & \textbf{137728.7} & 137746.4 \\
    \hline
    case57\_ieee & \textbf{37559.3} & 37559.5 \\
    \hline
    case73\_ieee\_rts & 189764.1 & 189764.1 \\
    \hline
    case89\_pegase & 106656.2 & \textbf{106623.0} \\
    \hline
    case118\_ieee & 96739.8 & \textbf{96730.7} \\
    \hline
    case179\_goc & nf.   & \textbf{754083.7} \\
    \hline
    case200\_activ & \textbf{27557.6} & 27558.3 \\
    \hhline{|=|=|=|}
    case3\_lmbd\_sad & 5959.3 & 5959.3 \\
    \hline
    case5\_pjm\_sad & 26108.8 & 26108.8 \\
    \hline
    case14\_ieee\_sad & \textbf{2727.5} & 2768.5 \\
    \hline
    case24\_ieee\_rts\_sad & \textbf{75794.0} & 76613.0 \\
    \hline
    case30\_as\_sad & \textbf{893.9} & 894.3 \\
    \hline
    case30\_ieee\_sad & 8188.6 & 8188.6 \\
    \hline
    case39\_epri\_sad & \textbf{147472.9} & 147908.9 \\
    \hline
    case57\_ieee\_sad & 38597.8 & 38597.8 \\
    \hline
    case89\_pegase\_sad & nf.   & \textbf{106633.6} \\
    \hline
    case118\_ieee\_sad & \textbf{97572.5} & 100429.9 \\
    \hline
    case179\_goc\_sad & nf.   & \textbf{762532.5} \\
    \hline
    case200\_activ\_sad & \textbf{27557.6} & 27558.3 \\
    \hhline{|=|=|=|} 
    case3\_lmbd\_api & 10636.0 & 10636.0 \\
    \hline
    case5\_pjm\_api & 75190.3 & 75190.3 \\
    \hline
    case14\_ieee\_api & 5999.4 & 5999.4 \\
    \hline
    case24\_ieee\_rts\_api & 119743.1 & 119743.1 \\
    \hline
    case30\_as\_api & 3065.8 & \textbf{2925.1} \\
    \hline
    case30\_ieee\_api & 17936.5 & 17936.5 \\
    \hline
    case39\_epri\_api & 246723.0 & 246723.0 \\
    \hline
    case57\_ieee\_api & \textbf{49271.9} & 49277.9 \\
    \hline
    case73\_ieee\_rts\_api & \textbf{385277.3} & 387328.1 \\
    \hline
    case89\_pegase\_api & \textbf{100325.3} & 123031.4 \\
    \hline
    case118\_ieee\_api & \textbf{181535.8} & 210056.6 \\
    \hline
    case162\_ieee\_dtc\_api & \textbf{116923.8} & 118378.1 \\
    \hline
    case179\_goc\_api & nf.   & \textbf{1932020.5} \\
    \hline
    case200\_activ\_api & 35701.5 & \textbf{35700.8} \\
    \hline
    case240\_pserc\_api & 4640000.0 & \textbf{4638308.7} \\
    \hline
    case300\_ieee\_api & nf.   & \textbf{684990.4} \\
    \hline
    \end{longtable}}

\section{Results for All ACOTS Instances with 500 to 2312 Buses}\label{ch:add_experiments_large}

{\cblue \footnotesize
    \begin{longtable}[htbp]{|r|r|r|r|r|>{\raggedleft\arraybackslash}p{1.4cm}|>{\raggedleft\arraybackslash}p{1.4cm}|>{\raggedleft\arraybackslash}p{1.4cm}|}
    \caption{\cblue \small \bf Objective value, runtime, and relative gap of ACOTS relaxations for all instances with 500 to 2312 buses (``nf.": no feasible solution within time limit).}\label{tab: large_inst}\\
    \hline
          &       & \multicolumn{3}{c|}{Objective Value} & \multicolumn{3}{c|}{Runtime (seconds) (Relative Gap)} \\
    Test Case & \multicolumn{1}{c|}{UB} & \multicolumn{1}{c|}{P} & \multicolumn{1}{c|}{E} & \multicolumn{1}{c|}{EC} & \multicolumn{1}{c|}{P} & \multicolumn{1}{c|}{E} & \multicolumn{1}{c|}{EC} \\
    \hline
    case500\_goc\_sad & 487397.2 & nf.   & 455966.2 & nf.   & -     & (0.5\%) & - \\
    \hline
    case588\_sdet\_sad & 313797.2 & nf.   & nf.   & nf.   & -     & -     & - \\
    \hline
    case793\_goc\_sad & 285798.4 & nf.   & nf.   & nf.   & -     & -     & - \\
    \hline
    case1354\_pegase\_sad & 1258848.0 & nf.   & 1239689.5 & 1239701.2 & -     & (0.1\%) & (0.2\%) \\
    \hline
    case1888\_rte\_sad & 1413922.9 & nf.   & nf.   & nf.   & -     & -     & - \\
    \hline
    case1951\_rte\_sad & 2089370.1 & nf.   & nf.   & nf.   & -     & -     & - \\
    \hline
    case2000\_goc\_sad & 992879.8 & nf.   & nf.   & 979501.4 & -     & -     & (65.1\%) \\
    \hline
    case2312\_goc\_sad & 461748.3 & nf.   & nf.   & nf.   & -     & -     & - \\
    \hhline{|=|=|=|=|=|=|=|=|}
    case500\_goc\_api & 675967.9 & nf.   & 668613.9 & 668617.9 & -     & 2332.84 & 9289.02 \\
    \hline
    case588\_sdet\_api & 391951.0 & 389971.1 & nf.   & nf.   & (0.2\%) & -     & - \\
    \hline
    case793\_goc\_api & 318853.5 & nf.   & nf.   & nf.   & -     & -     & - \\
    \hline
    case1354\_pegase\_api & 1498271.0 & nf.   & nf.   & 1490268.6 & -     & -     & (0.1\%) \\
    \hline
    case1888\_rte\_api & 1951676.1 & nf.   & nf.   & nf.   & -     & -     & - \\
    \hline
    case1951\_rte\_api & 2410827.1 & nf.   & nf.   & nf.   & -     & -     & - \\
    \hline
    case2000\_goc\_api & 1468630.3 & nf.   & nf.   & 1438342.5 & -     & -     & (86.2\%) \\
    \hline
    case2312\_goc\_api & 571446.1 & nf.   & 496817.5 & nf.   & -     & (26.3\%) & - \\
    \hhline{|=|=|=|=|=|=|=|=|}
    case500\_goc & 454946.0 & nf.   & 453852.0 & nf.   & -     & 8897.06 & - \\
    \hline
    case588\_sdet & 310016.0 & 307339.6 & nf.   & nf.   & (0.1\%) & -     & - \\
    \hline
    case793\_goc & 259097.8 & 256776.8 & nf.   & nf.   & (0.0\%) & -     & - \\
    \hline
    case1354\_pegase & 1258844.0 & nf.   & 1239314.7 & 1239327.2 & -     & 1770.48 & (0.1\%) \\
    \hline
    case1888\_rte & 1375888.3 & nf.   & nf.   & nf.   & -     & -     & - \\
    \hline
    case1951\_rte & 2085530.3 & nf.   & 2083344.3 & nf.   & -     & (0.1\%) & - \\
    \hline
    case2000\_goc & 973432.5 & nf.   & 970466.8 & nf.   & -     & (57.8\%) & - \\
    \hline
    case2312\_goc & 441330.3 & nf.   & nf.   & nf.   & -     & -     & - \\
    \hline
    \end{longtable}}

{\footnotesize \cblue
    \begin{longtable}[htbp]{|r|r|r|r|r|r|r|r|r|}
    \caption{\cblue \small \bf With the spanning tree heuristic: the objective value, runtime, and relative gap of ACOTS relaxations for all instances with 500 to 2312 buses (``nf.": no feasible solution within time limit).}\label{tab: large_span}\\
    \hline
          & \multicolumn{4}{c|}{Objective Value} & \multicolumn{4}{c|}{Runtime (seconds) (Relative Gap)} \\
    Test Case & \multicolumn{1}{c|}{E} & \multicolumn{1}{c|}{EC} & \multicolumn{1}{c|}{ECB} & \multicolumn{1}{c|}{ECB*} & \multicolumn{1}{c|}{E} & \multicolumn{1}{c|}{EC} & \multicolumn{1}{c|}{ECB} & \multicolumn{1}{c|}{ECB*} \\
    \hline
    case500\_goc\_sad & 455966.2 & nf.   & nf.   & nf.   & 376.13 & -     & -     & - \\
    \hline
    case588\_sdet\_sad & 309538.7 & nf.   & nf.   & nf.   & (0.6\%) & -     & -     & - \\
    \hline
    case793\_goc\_sad & 267942.0 & 268916.6 & 274171.5 & 273799.2 & (1.4\%) & (1.8\%) & (3.3\%) & (3.2\%) \\
    \hline
    case1354\_pegase\_sad & 1239689.5 & 1239701.2 & nf.   & nf.   & (0.1\%) & (0.1\%) & -     & - \\
    \hline
    case1888\_rte\_sad & nf.   & nf.   &  nf.     & nf.   & -     & -     &   -    & - \\
    \hline
    case1951\_rte\_sad & nf.   & nf.   & nf.   & nf.   & -     & -     & -     & - \\
    \hline
    case2000\_goc\_sad & nf.   & 979501.4 & nf.   & nf.   & -     & (65.1\%) & -     & - \\
    \hline
    case2312\_goc\_sad & nf.   & nf.   & nf.   & nf.   & -     & -     &   -    & - \\
    \hhline{|=|=|=|=|=|=|=|=|=|}
    case500\_goc\_api & 668613.9 & 668617.9 & nf.   & nf.   & 8774.69 & 753.22 & -     & - \\
    \hline
    case588\_sdet\_api & 389283.1 & 389286.7 & 390014.4 & 390166.5 & 3974.67 & (0.0\%) & (0.1\%) & (0.0\%) \\
    \hline
    case793\_goc\_api & nf.   & 276754.6 & 281581.3 & 283829.4 & -     & (0.3\%) & (1.6\%) & (2.4\%) \\
    \hline
    case1354\_pegase\_api & 1502166.8 & 1490268.6 & nf.   & nf.   & (0.8\%) & (0.0\%) & -     & - \\
    \hline
    case1888\_rte\_api & nf.   & nf.   &   nf.    & nf.   & -     & -     &  -     & - \\
    \hline
    case1951\_rte\_api & nf.   & nf.   & nf.   & nf.   & -     & -     & -     & - \\
    \hline
    case2000\_goc\_api & nf.   & 1438342.5 & nf.   &   nf.    & -     & 286.95 & -     & - \\
    \hline
    case2312\_goc\_api & 496817.5 & nf.   &  nf.     & nf.   & (26.3\%) & -     &  -     & - \\
    \hhline{|=|=|=|=|=|=|=|=|=|}
    case500\_goc & 453852.0 & nf.   & nf.   & nf.   & 3512.71 & -     & -     & - \\
    \hline
    case588\_sdet & 307165.3 & 307164.4 & 307746.1 & 307766.5 & (0.1\%) & (0.1\%) & (0.1\%) & (0.1\%) \\
    \hline
    case793\_goc & 256772.0 & 256772.4 & 256852.8 & 256908.6 & (0.0\%) & (0.0\%) & (0.0\%) & (0.0\%) \\
    \hline
    case1354\_pegase & 1239314.7 & 1239327.2 & nf.   & nf.   & (0.1\%) & (0.1\%) & -     & - \\
    \hline
    case1888\_rte & nf.   & nf.   &  nf.     & nf.   & -     & -     &   -    & - \\
    \hline
    case1951\_rte & nf.   & nf.   & nf.   & nf.   & -     & -     & -     & - \\
    \hline
    case2000\_goc & 970466.8 & nf.   & nf.   &  nf.     & (57.8\%) & -     & -     &  - \\
    \hline
    case2312\_goc & nf.   & nf.   &  nf.     & nf.   & -     & -     &  -     & - \\
    \hline
    \end{longtable}}

\section{Experiment Results for All ACOPF Instances}\label{ch:add_experiments_opf}

    {\cblue \footnotesize
    \begin{longtable}{|r|r|r|r|}
    \caption{\cblue \small \bf Comparing optimality gaps (\%) for ``E" and ``EC" relaxations for all ACOPF-QC instances (Diff.= gap of ``E"- gap of ``EC").}\label{tab: all_opf}\\
    \hline
    Test Case & \multicolumn{1}{c|}{E} & \multicolumn{1}{c|}{EC} & \multicolumn{1}{c|}{Diff.} \\
    \hline
    case3\_lmbd\_sad & 1.38  & 1.31  & 0.07 \\
    \hline
    case5\_pjm\_sad & 0.63  & 0.62  & 0.01 \\
    \hline
    case14\_ieee\_sad & 19.16 & 13.10 & 6.06 \\
    \hline
    case24\_ieee\_rts\_sad & 2.74  & 2.20  & 0.53 \\
    \hline
    case30\_as\_sad & 2.30  & 2.26  & 0.04 \\
    \hline
    case30\_ieee\_sad & 5.66  & 5.66  & 0.00 \\
    \hline
    case39\_epri\_sad & 0.20  & 0.20  & 0.00 \\
    \hline
    case57\_ieee\_sad & 0.32  & 0.25  & 0.07 \\
    \hline
    case73\_ieee\_rts\_sad & 2.37  & 1.80  & 0.57 \\
    \hline
    case89\_pegase\_sad & 0.70  & 0.70  & 0.00 \\
    \hline
    case118\_ieee\_sad & 6.63  & 6.61  & 0.02 \\
    \hline
    case162\_ieee\_dtc\_sad & 6.21  & 6.20  & 0.01 \\
    \hline
    case179\_goc\_sad & 0.99  & 0.97  & 0.02 \\
    \hline
    case200\_activ\_sad & 0.00  & 0.00  & 0.00 \\
    \hline
    case240\_pserc\_sad & 4.34  & 4.24  & 0.10 \\
    \hline
    case300\_ieee\_sad & 2.34  & 2.32  & 0.02 \\
    \hline
    case500\_goc\_sad & 6.45  & 6.44  & 0.01 \\
    \hline
    case588\_sdet\_sad & 5.94  & 5.94  & 0.00 \\
    \hline
    case793\_goc\_sad & 6.22  & 6.22  & 0.00 \\
    \hline
    case1354\_pegase\_sad & 1.52  & 1.52  & 0.00 \\
    \hline
    case1888\_rte\_sad & 2.80  & 2.80  & 0.00 \\
    \hline
    case1951\_rte\_sad & 0.42  & 0.42  & 0.00 \\
    \hline
    case2000\_goc\_sad & 1.35  & 1.35  & 0.00 \\
    \hline
    case2312\_goc\_sad & 3.39  & 3.37  & 0.02 \\
    \hline
    case2383wp\_k\_sad & 1.91  & 1.88  & 0.03 \\
    \hline
    case2736sp\_k\_sad & 1.31  & 1.31  & 0.00 \\
    \hline
    case2737sop\_k\_sad & 1.69  & 1.69  & 0.00 \\
    \hline
    case2742\_goc\_sad & 1.33  & 1.33  & 0.00 \\
    \hline
    case2746wop\_k\_sad & 1.96  & 1.96  & 0.00 \\
    \hline
    case2746wp\_k\_sad & 1.63  & 1.63  & 0.00 \\
    \hline
    case2848\_rte\_sad & 0.23  & 0.23  & 0.00 \\
    \hline
    case2853\_sdet\_sad & 1.64  & 1.64  & 0.00 \\
    \hline
    case2868\_rte\_sad & 0.55  & 0.55  & 0.00 \\
    \hline
    case2869\_pegase\_sad & 1.01  & 1.01  & 0.00 \\
    \hhline{|=|=|=|=|}
    case3\_lmbd\_api & 4.53  & 3.85  & 0.68 \\
    \hline
    case5\_pjm\_api & 4.09  & 4.09  & 0.00 \\
    \hline
    case14\_ieee\_api & 5.13  & 5.13  & 0.00 \\
    \hline
    case24\_ieee\_rts\_api & 11.02 & 10.88 & 0.14 \\
    \hline
    case30\_as\_api & 44.60 & 44.60 & 0.00 \\
    \hline
    case30\_ieee\_api & 5.45  & 5.45  & 0.00 \\
    \hline
    case39\_epri\_api & 1.69  & 1.69  & 0.00 \\
    \hline
    case57\_ieee\_api & 0.08  & 0.08  & 0.00 \\
    \hline
    case73\_ieee\_rts\_api & 9.52  & 9.31  & 0.22 \\
    \hline
    case89\_pegase\_api & 23.06 & 23.06 & 0.00 \\
    \hline
    case118\_ieee\_api & 29.61 & 29.60 & 0.01 \\
    \hline
    case162\_ieee\_dtc\_api & 4.32  & 4.32  & 0.01 \\
    \hline
    case179\_goc\_api & 5.86  & 5.75  & 0.11 \\
    \hline
    case200\_activ\_api & 0.02  & 0.02  & 0.00 \\
    \hline
    case240\_pserc\_api & 0.62  & 0.62  & 0.00 \\
    \hline
    case300\_ieee\_api & 0.82  & 0.81  & 0.00 \\
    \hline
    case500\_goc\_api & 3.44  & 3.44  & 0.00 \\
    \hline
    case588\_sdet\_api & 1.39  & 1.39  & 0.00 \\
    \hline
    case793\_goc\_api & 13.19 & 13.19 & 0.00 \\
    \hline
    case1354\_pegase\_api & 0.54  & 0.53  & 0.00 \\
    \hline
    case1888\_rte\_api & 0.22  & 0.22  & 0.00 \\
    \hline
    case1951\_rte\_api & 0.51  & 0.50  & 0.00 \\
    \hline
    case2000\_goc\_api & 2.06  & 2.06  & 0.00 \\
    \hline
    case2312\_goc\_api & 13.07 & 13.07 & 0.00 \\
    \hline
    case2383wp\_k\_api & 0.00  & 0.00  & 0.00 \\
    \hline
    case2736sp\_k\_api & 10.83 & 10.82 & 0.00 \\
    \hline
    case2737sop\_k\_api & 5.88  & 5.88  & 0.00 \\
    \hline
    case2742\_goc\_api & 24.40 & 24.40 & 0.00 \\
    \hline
    case2746wop\_k\_api & 0.00  & 0.00  & 0.00 \\
    \hline
    case2746wp\_k\_api & 0.00  & 0.00  & 0.00 \\
    \hline
    case2848\_rte\_api & 0.25  & 0.25  & 0.00 \\
    \hline
    case2853\_sdet\_api & 1.91  & 1.91  & 0.00 \\
    \hline
    case2868\_rte\_api & 0.17  & 0.17  & 0.00 \\
    \hline
    case2869\_pegase\_api & 0.99  & 0.98  & 0.00 \\
    \hhline{|=|=|=|=|}
    case3\_lmbd & 0.97  & 0.97  & 0.00 \\
    \hline
    case5\_pjm & 14.54 & 14.53 & 0.01 \\
    \hline
    case14\_ieee & 0.11  & 0.11  & 0.00 \\
    \hline
    case24\_ieee\_rts & 0.01  & 0.01  & 0.00 \\
    \hline
    case30\_as & 0.06  & 0.06  & 0.00 \\
    \hline
    case30\_ieee & 18.67 & 18.67 & 0.00 \\
    \hline
    case39\_epri & 0.54  & 0.54  & 0.00 \\
    \hline
    case57\_ieee & 0.16  & 0.16  & 0.00 \\
    \hline
    case73\_ieee\_rts & 0.03  & 0.03  & 0.00 \\
    \hline
    case89\_pegase & 0.75  & 0.74  & 0.00 \\
    \hline
    case118\_ieee & 0.77  & 0.76  & 0.01 \\
    \hline
    case162\_ieee\_dtc & 5.84  & 5.83  & 0.00 \\
    \hline
    case179\_goc & 0.15  & 0.15  & 0.00 \\
    \hline
    case200\_activ & 0.00  & 0.00  & 0.00 \\
    \hline
    case240\_pserc & 2.72  & 2.72  & 0.00 \\
    \hline
    case300\_ieee & 2.56  & 2.55  & 0.00 \\
    \hline
    case500\_goc & 0.24  & 0.24  & 0.00 \\
    \hline
    case588\_sdet & 1.91  & 1.91  & 0.00 \\
    \hline
    case793\_goc & 1.32  & 1.32  & 0.00 \\
    \hline
    case1354\_pegase & 1.55  & 1.55  & 0.00 \\
    \hline
    case1888\_rte & 2.04  & 2.04  & 0.00 \\
    \hline
    case1951\_rte & 0.13  & 0.13  & 0.00 \\
    \hline
    case2000\_goc & 0.30  & 0.30  & 0.00 \\
    \hline
    case2312\_goc & 1.89  & 1.89  & 0.00 \\
    \hline
    case2383wp\_k & 0.96  & 0.96  & 0.00 \\
    \hline
    case2736sp\_k & 0.30  & 0.29  & 0.00 \\
    \hline
    case2737sop\_k & 0.26  & 0.25  & 0.01 \\
    \hline
    case2742\_goc & 1.32  & 1.32  & 0.00 \\
    \hline
    case2746wop\_k & 0.35  & 0.35  & 0.00 \\
    \hline
    case2746wp\_k & 0.31  & 0.31  & 0.00 \\
    \hline
    case2848\_rte & 0.12  & 0.12  & 0.00 \\
    \hline
    case2853\_sdet & 0.86  & 0.86  & 0.00 \\
    \hline
    case2868\_rte & 0.09  & 0.09  & 0.00 \\
    \hline
    case2869\_pegase & 1.00  & 1.00  & 0.00 \\
    \hline
    \end{longtable}}

\FloatBarrier

\end{APPENDICES}

\end{document}